  \CheckCommand*\refstepcounter[1]{\stepcounter{#1}%
      \protected@edef\@currentlabel
       {\csname p@#1\endcsname\csname the#1\endcsname}%
  }
  \renewcommand*\refstepcounter[1]{\stepcounter{#1}%
    \protected@edef\@currentlabel
      {\csname p@#1\expandafter\endcsname\csname the#1\endcsname}%
  }
  \def\labelformat#1{\expandafter\def\csname p@#1\endcsname##1}
  \DeclareRobustCommand\Ref[1]{\protected@edef\@tempa{\ref{#1}}%
     \expandafter\MakeUppercase\@tempa
  }
  \newcommand{\numberlike}[2]{%
     \expandafter\def\csname c@#1\endcsname{%
         \expandafter\csname c@#2\endcsname}%
  }
  \def\DefaultNumberTheoremWithin{section}
  \theoremstyle{plain}
  \newtheorem{lemma}{Lemma}
     \numberwithin{lemma}{\DefaultNumberTheoremWithin}
     \numberwithin{claim}{\DefaultNumberTheoremWithin}
  \newtheorem{theorem}{Theorem}
     \numberwithin{theorem}{\DefaultNumberTheoremWithin}
  \newtheorem{corollary}{Corollary}
     \numberwithin{corollary}{\DefaultNumberTheoremWithin}
  \newtheorem{proposition}{Proposition}
     \numberwithin{proposition}{\DefaultNumberTheoremWithin}
     \numberwithin{conjecture}{\DefaultNumberTheoremWithin}
  \theoremstyle{definition}
  \newtheorem{definition}{Definition}
     \numberwithin{definition}{\DefaultNumberTheoremWithin}
  \theoremstyle{definition}
     \numberwithin{question}{\DefaultNumberTheoremWithin}
  \theoremstyle{definition}
     \numberwithin{problem}{\DefaultNumberTheoremWithin}
  \theoremstyle{remark}
     \numberwithin{remark}{\DefaultNumberTheoremWithin}
  \theoremstyle{remark}
  \newtheorem{example}{Example}
     \numberwithin{example}{\DefaultNumberTheoremWithin}
     \numberwithin{case}{lemma}
     \numberwithin{step}{lemma}
  \newcommand{\NN}{\mathbb{N}}
  \newcommand{\KK}{\mathbb{K}}
  \newcommand{\ZZ}{\mathbb{Z}}
  \newcommand{\RR}{\mathbb{R}}
  \newcommand{\Sub}{\mathrm{Sub}}
	\newcommand{\depth}{\mathrm{depth}}
	 \newcommand{\PP}{\mathbb{P}}
    \newcommand{\Tor}{\mathrm{Tor}}
	\newcommand{\pdim}{\mathrm{pdim}}
		\newcommand{\reg}{\mathrm{reg}}
\newcommand{\st}{\mathrm{star}}
\newcommand{\lk}{\mathrm{lk}}
\newcommand{\sd}{\mathrm{sd}}
\newcommand{\fe}{\mathfrak{e}}
\newcommand{\fu}{\mathfrak{u}}
\newcommand{\ffi}{\mathfrak{i}}
\newcommand{\supp}{\mathrm{supp}}
\newcommand{\AST}{\mathop{\ast}}
\begin{document}

\title{Asymptotic syzygies of Stanley-Reisner rings of iterated subdivisions}

\author{Aldo Conca}
\author{Martina Juhnke-Kubitzke}
\author{Volkmar Welker}

\maketitle
\begin{abstract}
Inspired by recent results of Ein, Lazarsfeld,  Erman and Zhou on the non-vanishing of Betti numbers of high Veronese subrings, we describe the behaviour of the Betti numbers of  Stanley-Reisner rings associated with  iterated barycentric or  edgewise subdivisions of a given simplicial complex. Our results show that for a simplicial complex $\Delta$ of dimension $d-1$ and for $1\leq j\leq d-1$ the number of $0$'s   the $j$\textsuperscript{th} linear strand of the minimal free resolution of the $r$\textsuperscript{th} barycentric or edgewise subdivision is bounded above only in terms of $d$ and $j$ (and independently of $r$). 
\end{abstract}

\section{Introduction}
Ein, Lazarsfeld and Erman   \cite{EinLazarsfeld, EinErmanLazarsfeld},  and Zhou \cite{Zhou} studied recently  the asymptotic behavior of the syzygies of algebraic varieties under high Veronese embeddings.  In particular, they treated the case of  the syzygies of $r$\textsuperscript{th} Veronese embeddings $v_{r}(\PP^n)$ of projective space $\PP^n$.   Roughly speaking, they proved that   for large $r$  the syzygies of $v_{r}(\PP^n)$ are non-zero for most of  the homological positions and internal degrees that are allowed by the restrictions  imposed by the projective dimension and by the Castelnuovo-Mumford regularity. Similar results, but with less precise bounds, are obtained for arithmetically Cohen-Macaulay varieties and are conjectured in general. 

The goal of this paper is to prove that a similar behavior occurs also for the syzygies of  Stanley-Reisner rings  of iterated barycentric subdivisions and edgewise subdivisions. These two combinatorial operations on simplical complexes have some formal similarity (but also some important dissimilarity) to the formation of Veronese subalgebras.  Moreover, as we explain later on,  the edgewise subdivision of a simplicial complex is closely related, via  Gr\"obner deformations,   with the formation of Veronese subalgebras of the associated Stanley-Reisner ring. 

Let $\KK$ be a field and  let $S=\KK[x_1,\dots,x_n]$ be the polynomial ring.  Let $I$ be a homogeneous ideal of $S$ such that $I\subset (x_1,\dots,x_n)^2$.  Let  $A =S/I=\bigoplus_{i \geq 0} A_i$  be a standard graded $\KK$-algebra. Denote by $\beta_{i,j}(A)$ the graded Betti number of $A$, i.e.,
$$\beta_{i,j}(A)=\dim_\KK \Tor_i^S(A,\KK)_j=\dim_\KK H_i(m_A,A)_j.$$
Here $H_i(m_A,A)$ denotes the $i$\textsuperscript{th} Koszul homology of the maximal homogeneous ideal $m_A=\bigoplus_{i \geq 1} A_i$ of $A$  and the index $j$ on the right always denotes the selection of the $j$\textsuperscript{th} homogeneous component. 

The $r$\textsuperscript{th} Veronese algebra of $A$ is by definition 
  $$A^{(r)} = \bigoplus_{i \geq 0} A_{ir}.$$

One of the main results of  \cite{EinLazarsfeld} asserts that for every integer $j$ in the interval $[1,n-1]$ the graded Betti numbers $\beta_{i,i+j}(S^{(r)})$  of the $r$\textsuperscript{th} Veronese subalgebra of the polynomial ring $S$ 
  are asymptotically (i.e., for large $r$)   
  non-zero for  every integer   $i$ in an interval $[a_j,b_j]$ with specified endpoints $a_j,b_j$  that depend on $j$. Note that the Castelnuovo-Mumford regularity of  $S^{(r)}$ is always  $\leq n-1$ with equality  for large $r$.  Hence it is clear that $\beta_{i,i+j}(S^{(r)})=0$ for $j$ outside the interval  $[1,n-1]$ with the exception of $\beta_{0,0}(S^{(r)})=1$.  Comparing the size of  the  intervals $[a_j,b_j]$ with the projective dimension of $S^{(r)}$ (i.e., the length of the minimal free resolution) one deduces that for large $r$ the Betti number  $\beta_{i,i+j}(S^{(r)})$ is non-zero for  most of  the  values of $i,j$, which are allowed by the restrictions imposed by the value of the projective dimension and the Castelnuovo-Mumford regularity. 
    
 In  \cite{EinErmanLazarsfeld}  a similar result, but with a less precise description of the intervals,  is proved  for the Betti numbers  $\beta_{i,i+j}(A^{(r)})$ of the Veronese subalgebras of an arbitrary Cohen-Macaulay algebra $A$.   
 In particular, it follows that for every $j=1,\dots,\dim A-1$ one has: 
 \begin{equation}
 \label{EEL1}
      \lim_{r\rightarrow \infty}\frac{ \#\{i~:~\beta_{i,i+j}(A^{(r)})\neq 0\}}{\pdim (A^{(r)})}
        =1.
    \end{equation}
 Furthermore, the authors conjecture that the same behavior holds for an arbitrary standard graded $\KK$-algebra.

 Let  $\Delta$ be a simplicial complex and denote by  $\KK[\Delta]$ the corresponding Stanley-Reisner ring.   We will consider two combinatorial operations on simplicial complexes: the iterated barycentric subdivision and edgewise subdivisions. We will denote by   $\sd^r(\Delta)$ 
  the  $r$\textsuperscript{th} iterated barycentric subdivision of $\Delta$ and by $\Delta^{\langle r \rangle}$ the  $r$\textsuperscript{th}  edgewise subdivision  of $\Delta$,  whose definition is recalled in \ref{notat}. 
  
  We will study the asymptotic behavior of $\beta_{i,i+j}(\KK[ \sd^r(\Delta)])$ and $\beta_{i,i+j}(\KK[ \Delta^{\langle r \rangle} ])$. The main results we prove are  \ref{thm:asymptoticsBary},  \ref{prop:BarycentricHomology}, \ref{thm:asymptoticsEdge} and \ref{prop:edgewiseHomology} whose main content  is the following: 
  
  \begin{theorem}
  \label{maintheo}
   Let $\Delta$ be an arbitrary simplicial complex of dimension $d-1>0$. Let $\Delta(r)$ be either the iterated barycentric subdivision $\sd^r(\Delta)$ or the  edgewise subdivision $\Delta^{\langle r \rangle}$  of $\Delta$. Then for large $r$ the Castelnuovo-Mumford regularity of   $\KK[\Delta(r)]$ is given by: 
$$\reg(\KK[\Delta(r)])=
  \left\{\array{ll}  d-1, &  \mbox{if }  \widetilde{H}_{d-1}(\Delta;\KK)= 0 \\
   d,   &   \mbox{if }  \widetilde{H}_{d-1}(\Delta;\KK)\neq 0.  \endarray \right. $$
Furthermore:   \begin{itemize}
  \item[(1)]  For every $j=1,\dots,d-1$ one has that  $\#\{ i ~:~ \beta_{i,i+j}(\KK[\Delta(r) ])=0\}$ is bounded above in terms of $d,j$ (and independently of $r$). In particular: 
 \begin{equation*}
      \lim_{r\rightarrow \infty}\frac{ \#\{i~:~\beta_{i,i+j}(\KK[\Delta(r) ])\neq 0\}}{\pdim (\KK[\Delta(r) ])}
        =1.
    \end{equation*}
 \item[(2)]   If $\widetilde{H}_{d-1}(\Delta;\KK)\neq 0$ then 
  \begin{equation*}
      \lim_{r\rightarrow \infty}\frac{ \#\{i~:~\beta_{i,i+d}(\KK[\Delta(r)])\neq 0\}}{\pdim (\KK[\Delta(r)])}
    \end{equation*}
  is a rational number in the interval $[0,1)$ that  can be described in terms of the  minimal $(d-1)$-cycles of $\Delta$.  \end{itemize}    
     \end{theorem}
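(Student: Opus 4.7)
The backbone of the argument is Hochster's formula
\[
\beta_{i,i+j}(\KK[\Delta(r)])=\sum_{\substack{W\subseteq V(\Delta(r))\\|W|=i+j}}\dim_\KK\widetilde{H}_{j-1}(\Delta(r)_W;\KK),
\]
which translates nonvanishing of Betti numbers into the existence of induced subcomplexes with prescribed vertex count and nonzero reduced homology. Throughout I would exploit the crucial fact that both $\sd$ and the edgewise operation preserve homotopy type, so $\widetilde H_*(\Delta(r);\KK)\cong\widetilde H_*(\Delta;\KK)$ for every $r$.

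\textbf{Regularity.} The bound $\reg(\KK[\Delta(r)])\le d$ is automatic from $\dim\Delta(r)=d-1$, and when $\widetilde H_{d-1}(\Delta;\KK)\ne 0$ the full vertex set of $\Delta(r)$ witnesses $\reg=d$ via Hochster. For the case $\widetilde H_{d-1}(\Delta;\KK)=0$, the bound $\reg\le d-1$ requires that every induced subcomplex of $\Delta(r)$ have vanishing top homology; I would establish this by showing that any $(d-1)$-cycle supported on an induced subcomplex of $\Delta(r)$ descends, under the natural subdivision map, to a $(d-1)$-cycle of $\Delta$, after which vanishing of $\widetilde H_{d-1}(\Delta;\KK)$ finishes the argument.

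\textbf{Part (1).} For each $1\le j\le d-1$, I would select a $j$-face $\sigma$ of $\Delta$; its boundary subdivides to an induced $(j-1)$-sphere $\Sigma\subseteq\Delta(r)$ of controlled vertex count. The key step is to enrich $\Sigma$: using the fine structure of iterated subdivision I would extract a large family of vertices $v\in V(\Delta(r))$ such that $v$ may be adjoined to an already built induced subcomplex $W\supseteq V(\Sigma)$ without creating any face of dimension $>j-1$. Adding such vertices one at a time keeps $\Sigma$ as the top-dimensional skeleton of $\Delta(r)_W$, hence preserves $\widetilde H_{j-1}(\Delta(r)_W;\KK)\ne 0$. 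Since $|V(\Delta(r))|\to\infty$ with $r$, Hochster's formula yields nonvanishing $\beta_{i,i+j}(\KK[\Delta(r)])$ for all but $c_j$ values of $i$, where $c_j$ depends only on $d$ and $j$; the density limit $1$ follows because $\pdim(\KK[\Delta(r)])\to\infty$ as well.

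\textbf{Part (2) and main obstacle.} If $\widetilde H_{d-1}(\Delta;\KK)\ne 0$, an induced subcomplex $\Delta(r)_W$ carries a nonzero top cycle precisely when $W$ contains the refinement of some minimal $(d-1)$-cycle $z$ of $\Delta$. The combinatorics of each subdivision scheme yields an explicit vertex count $N(z,r)$ for the refinement of $z$, so $\beta_{i,i+d}(\KK[\Delta(r)])\ne 0$ whenever $i+d\ge \min_z N(z,r)$, up to the same padding correction as in part (1). Comparing with $\pdim(\KK[\Delta(r)])$ and letting $r\to\infty$ produces a rational limit in $[0,1)$ determined by the minimal $(d-1)$-cycles. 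The principal obstacle is the padding step of part (1): one must guarantee, independently of $r$, a supply of vertices in $\Delta(r)$ that are ``far enough'' from $\Sigma$ for adjunction to create no new $j$-face, and one must argue that the set of unattained cardinalities has size bounded uniformly in $r$. This uniform quantitative control is invisible after a single subdivision but becomes accessible after iteration, and is where the hypothesis of large $r$ really enters.
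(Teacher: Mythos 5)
Your overall strategy tracks the paper's: Hochster's formula reduces everything to finding induced subcomplexes with prescribed vertex count and nontrivial $\widetilde H_{j-1}$, and both you and the paper plan to ``pad'' a small such subcomplex with far-away vertices to fill a long run of homological degrees $i$. You have correctly isolated the crux, but you leave it unresolved: you ask for a family of vertices whose adjunction creates no new $j$-face, give no mechanism to produce such a family, and in fact this weaker condition is harder to certify than what is actually needed. The paper's resolution (proof of \ref{thm:asymptoticsBary}) is a \emph{disconnection} argument tailored to iteration: pick any $(d-1)$-face $H$ of $\Delta$, a $(d-1)$-face $F$ of $\sd^{r-3}(2^H)$, and a $(d-1)$-face $G$ of $\sd^2(2^F)$ lying in the interior of $\sd^2(2^F)$; after one further subdivision, no vertex of $\sd(2^G)$ is joined by an edge to any vertex outside $\sd^3(2^F)$. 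Then for any $A\subseteq V(\Delta(r))\setminus V(\sd^3(2^F))$ and any homologically nontrivial $B\subseteq V(\sd(2^G))$, the restriction $\Delta(r)_{A\cup B}$ splits as a disjoint union, so $\widetilde H_{j-1}(\Delta(r)_{A\cup B};\KK)\supseteq\widetilde H_{j-1}(\Delta(r)_B;\KK)\ne 0$. Because the number $N(d)=\#V(\sd^3(\Delta_{d-1}))\setminus\partial$ does not depend on $r$, exactly $N(d)$ padding vertices are forbidden, which is what gives the uniform bound on $\#\{i:\beta_{i,i+j}=0\}$. An analogous nested construction $\Gamma\subseteq\widetilde\Delta\subseteq(2^F)^{\langle r\rangle}$ with $\Gamma\cong\Delta_{d-1}^{\langle d\rangle}$ buried in the interior is used for edgewise subdivision (\ref{thm:asymptoticsEdge}).

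Two smaller issues. First, your ``small sphere of controlled vertex count'' is imprecise as written: the $r$-fold subdivision of $\partial\sigma$ for a $j$-face $\sigma$ of $\Delta$ has vertex count growing with $r$, so it cannot serve as the lower anchor of the strand. The paper instead builds $(j-1)$-spheres out of the \emph{last} subdivision only, via joins of small spheres inside $\sd(\Delta_{d-1})$ (\ref{thm:continuous}, \ref{le:barysimplex}); combined with the flagness of barycentric subdivisions and a minimal-support lemma (\ref{lem:flag}, \ref{lowerbound}), this pins the left endpoint exactly at $i=j$ (for $j\le d/2$) or at the number $m_j$ (\ref{defmj}) otherwise. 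Second, for part (2) your sketch tacitly assumes that the ratio $\#V_r^\sigma/\pdim$ converges; making that precise requires the asymptotics of $f$-vectors under iterated subdivision (Delucchi--Pixton--Sabalka for barycentric, Diaconis--Fulman for edgewise), together with the fact (\ref{lem:cycles}, \ref{lem:minimal}, \ref{lem:minimale}) that $(d-1)$-cycles of $\Delta(r)$ all come from cycles of $\Delta$ and that the refinement of a \emph{minimal} cycle remains minimal for large $r$. Those references are the missing quantitative input.
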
 
 
The limit in (2) does not depend on whether one takes the iterated 
barycentric  subdivision or the edgewise subdivision and for any rational number in the interval $[0,1)$ and any $d$, we construct a $(d-1)$-dimensional Cohen-Macaulay complex that realizes the specified limit.

Brun and R\"omer proved in  \cite{BrunRoemer} that for every simplicial 
complex $\Delta$ there is  a term order $\prec$ such that the Veronese 
subalgebra $\KK[\Delta]^{(r)}$ of $\KK[\Delta]$ has a Gr\"obner deformation 
to $\KK[ \Delta^{\langle r \rangle}]$. Betti numbers can only increase under a Gr\"obner deformation and hence one has
$$\beta_{i,i+j}(\KK[\Delta]^{(r)} )\leq \beta_{i,i+j}(\KK[\Delta^{\langle r  \rangle}] )$$
for every $r,i,j$. 
In particular, the assertion about the limit in   \ref{maintheo} (1) for $\Delta^{\langle r\rangle}$ 
when $\Delta$ is Cohen-Macaulay can be deduced from the result  of Ein, Erman and Lazarsfeld  
Equation \ref{EEL1} that will be presented in \cite{EinErmanLazarsfeld}.
Note however that our  assertion  in \ref{maintheo} about the cardinality of $ \{ i: \beta_{i,i+j}(\KK[\Delta(r) ])=0\}$ being bounded only in terms of $j,d$ is  simply not true for iterated Veronese as can be deduced from the results in  \cite{EinLazarsfeld, EinErmanLazarsfeld}. Another 
important difference between our results and the ones in  \cite{EinLazarsfeld, EinErmanLazarsfeld} is the statement about the $d$\textsuperscript{th} strand.  In our context the limit behavior gives a number in $[0,1)$ and any number is indeed possible. In the Veronese case, at least for the coordinate rings of smooth varieties in characteristic $0$,  the length on the $d$\textsuperscript{th} strand is constant and hence the corresponding limit is $0$, see \cite[Eq.(1.3) pg.607]{EinLazarsfeld}.   
 
Our methods are mostly geometric and combinatorial and are based on  Hochster's formula expressing the Betti numbers 
in terms of homologies of induced subcomplexes. The case when
$\Delta = \Delta_{d-1}$ is the full $(d-1)$-simplex turns out to be the
crucial case. A key observation
for this analysis is (see \ref{lem:InnerFace}) that links of faces of
$\Delta_{d-1}^{\langle r \rangle}$, whose interior lies in the interior of $\Delta_{d-1}$,
are barycentric subdivisions of boundaries of full simplices.
This essentially reduces the analysis of the Betti numbers of $\KK[\Delta^{\langle r \rangle}]$  and of $\KK[\sd^r(\Delta)]$ to the
analysis of the Betti numbers of $\KK[\sd (\Delta_{d-1})]$. This analysis is performed in \ref{sec:bary}
and appears to be of independent interest.  

We remark that many of the arguments that we present go through for arbitrary subdivision operations that satisfy mild 
assumptions. The actual formulation of the assumption is technical and does not
give too much insight, but the requirement is twofold. First, one has to  deal with a subdivision operator that  can be   applied   iteratively. The second technical
requirement roughly says that when iteratively subdividing a simplex 
``sufficiently many'' new vertices are created in the interior of the simplex.
The conclusion will be that for a simplicial complex $\Delta$ of dimension $d-1$ and
a suitable subdivision operation $\Sub$, if for some $j<d$, $1\leq k$ and $r \geq 0$ 
we have $\beta_{k,k+j}(\KK[\Sub^r(\Delta)]) \neq 0$ then 

$$\lim_{r\rightarrow \infty}\frac{ \#\{i~:~\beta_{i,i+j}(\KK[\Sub^r(\Delta) ])\neq 0\}}{\pdim (\KK[\Sub^r(\Delta) ])}
  = 1.$$

Examples of suitable subivision operations can be found in various  articles, see for example \cite{CMS} and \cite{W}.
Note that there are suitable subdivision operations that fail to produce non-zero 
$\beta_{i,i+j}(\Sub(\KK[\Delta]))$ for every  $1 \leq j \leq d-1$ and for some $i$. As an example serves the
subdivision of $(d-1)$-dimensional simplicial complexes where we subdivide by coning the
boundary of each $(d-1)$-simplex over a point in the interior of the simplex.

  \subsection{Notation and background}
\label{notat}
  Let $\Delta$ be a simplicial complex on ground set $\Omega$. We call an element $F \in \Delta$ a \emph{face}
  of $\Delta$ with \emph{dimension} $\dim F := \# F -1$. The \emph{dimension} $\dim \Delta$ of $\Delta$ is then
  the maximal dimension of one of its faces. If $\dim \Delta = d-1$, then we
  write $f^\Delta = (f_{-1}^\Delta,\ldots, f_{d-1}^\Delta)$ for the \emph{$f$-vector} of $\Delta$, where
  $f_i^\Delta$ counts the number of $i$-dimensional faces in $\Delta$.

  Sometimes we  are
  interested in subsets of $\Delta$ that are not necessarily simplicial complexes themselves.
  Let $\Gamma \subseteq \Delta$ be such a subset. Then we consider $\Gamma$ as a partially ordered
  set ordered by inclusion and write $\Delta(\Gamma)$ for its \emph{order complex} that is the set of
  all subsets of $\Gamma$ that are linearly ordered. If $\Gamma = \Delta \setminus \{ \emptyset\}$, then
  $\sd(\Delta) := \Delta(\Gamma)$ is the \emph{barycentric subdivision} of $\Delta$.
  The other subdivision operation that is important for this paper is the edgewise subdivision.
  Assume $\Delta$ is $(d-1)$-dimensional with vertex set $[n] := \{1,\ldots, n\}$ and
  let $r\geq 1$ be a positive integer. Set $\Omega_{r,n}:=\{(i_1,\ldots,i_n)\in \NN^n~:~i_1+\cdots+i_n=r\}$.
  Denote by $\fe_i$ the $i$\textsuperscript{th} unit vector of $\RR^n$. By the obvious
  identification, we can consider $\Delta$ as a simplicial complex over the
  vertex set $\Omega_{1,n}=\{\fe_1,\ldots,\fe_n\}$.
  For $i\in [n]$ set $\fu_i:=\fe_i+\cdots+\fe_n$ and
  for $a=(a_1,\ldots,a_n)\in\ZZ^n$  let
  $\ffi(a):=\sum_{l=1}^n a_l\cdot \fu_l$. The \emph{$r$\textsuperscript{th} edgewise
  subdivision} of $\Delta$ is the simplicial complex $\Delta^{\langle r \rangle}$ on ground
  set $\Omega_{r,n}$ such that $F \subseteq \Omega_{r,n}$ is a simplex in $\Delta^{\langle r \rangle}$
  if and only if

  \begin{enumerate}
    \item[(i)]$\bigcup_{a\in F}\supp(a)\in\Delta$, and,
    \item[(ii)] for all $a, \tilde{a}\in F$ either
        $\ffi(a-\tilde{a})\in\{0,1\}^n$ or
        $\ffi(\tilde{a}-a)\in\{0,1\}^n$.
  \end{enumerate}

  If we denote by $|\Delta|$ the geometric realization of $\Delta$, then we can choose
  realizations such that $|\Delta| = |\sd(\Delta)| = |\Delta^{\langle r \rangle}|$.
  If $F$ is a face of $\Delta$, we sometimes write $|F|$ to denote the geometric realization of
  the subcomplex $2^F$ of $\Delta$. By $\partial |\Delta|$ we denote the boundary of $|\Delta|$.

  For a simplical complex $\Delta$ on ground set $\Omega$ its \emph{Stanley-Reisner ideal} $I_\Delta$ is
  the ideal in $S = \KK[x_\omega~ :~ \omega \in \Omega]$ generated by monomials $\prod_{\omega \in N} x_\omega$
  for $N \not\in \Delta$. $\KK[\Delta] := S/I_\Delta$ is called the \emph{Stanley-Reisner ring} of $\Delta$.
  For background on the algebraic invariants of $\KK[\Delta]$ studied in this paper we refer to
  \cite{BH,Eisenbud}.
  
 \section{Basic algebraic invariants under Veronese, barycentric and edgewise subdivision} \label{sec:VeroBaryEdge}
 
 In this section we recall how some basic invariants of standard graded $\KK$-algebras and Stanley-Reisner rings behave  
 under the Veronese operation  on the algebra and barycentric/edgewise subdivision on the simplicial complex. 
 
 In the following $A=\bigoplus_{i=0}^\infty A_i$ will denote a standard graded $\KK$-algebra of Krull dimension $d>0$ and $H^i_{m_A}(A)$ will denote the $i$\textsuperscript{th} local cohomology module with respect to the maximal homogeneous ideal $m_A=\bigoplus_{i=1}^\infty A_i$ of $A$.  Furthermore,   $\Delta$ will denote a  simplicial complex of dimension $d-1\geq 0$.   We denote by $t_1(A)$ the largest degree of a minimal generator of the defining ideal of $A$ as a quotient of the polynomial ring in $\dim A_1$ variables.  Hence $t_1(\KK[\Delta])$ is the largest cardinality of a minimal non-face of $\Delta$ and $t_1(\KK[\Delta])=2$ if and only if $\Delta$ is a flag complex. 
 Set $a_d(A)=\max \{ j ~:~ H_{m_A}^d(A)_j\neq 0\}$ and  
 $$w(\Delta,\KK)=\left\{\array{ll}  d-1, &  \mbox{if }  \widetilde{H}_{d-1}(\Delta;\KK)= 0 \\ d,   &   \mbox{if }  \widetilde{H}_{d-1}(\Delta;\KK)\neq 0.  \endarray \right.$$
 We have: 
 
 \begin{table}[htdp]
\caption{}
\begin{center}
\begin{tabular}{|c|c|c|c|c|}  
\hline
                &  (1)   &  (2) &  (3)  &   (4)  \\ \hline 
                &  $A^{(r)}$  &   $\KK[\Delta]^{(r)}$ &   $\KK[\Delta^{\langle r\rangle}]$ &    $\KK[\sd^r (\Delta)]$ \\ \hline 
$\dim $    & $\dim A$ & $\dim  \KK[\Delta]$ &   $\dim \KK[\Delta]$   & $\dim \KK[\Delta]$  \\  \hline 
$\depth$  & $\geq \depth\  A$ &   $\depth\  \KK[\Delta]$   &  $\depth\  \KK[\Delta]$ & $\depth\  \KK[\Delta]$  \\  \hline 
$t_1$       &  $2$  $^*$& $2$  $^*$  &      $\left\{\array{ll} t_1( \KK[\Delta])-1  \\   t_1(\KK[\Delta]) \endarray \right.$   $^{**}$  &  $2$      \\  \hline 
$\reg$     &  
$\left\{\array{ll}  d-1, &  \mbox{if }   a_d(A)<0 \\ d,   &   \mbox{if }  a_d(A)\geq 0 \endarray \right.$  $^*$  & 
$w(\Delta,\KK)$  $^*$  & 
$w(\Delta,\KK)$  $^*$  &
$w(\Delta,\KK)$  
    \\  \hline 
 \end{tabular}
\end{center}
\label{How}
\end{table}

\noindent where $^*$ means that the formula holds for large values of $r$.  The formulas marked with $^*$ can often be made more precise, for example, 
$t_1(A^{(r)})\leq \max\{ 2, \lceil  t_1(A)/r\rceil \}$ holds for every $r$.  

The data in  column (1)  are obtained by applying the  formula that relates the local cohomology before and after applying the Veronese functor, see \cite[Thm. 3.1.1]{GW},  and the assertion that $H_{m_A}^d(A)_j\neq 0$ for every $j\leq a_d(A)$, see  \cite[Prop. 2.2]{CJR}.  It might happen that  $\depth\ A^{(r)}>\depth\ A$ for every $r>1$. Take, for example,  $A=\KK[x,y]/(x^2,xy)$ or, if one wants a domain,  $A=\KK[x^4,x^3y,xy^3,y^4]$.   

The data in column (2) are obtained using \cite[Thm. 3.1.1]{GW} and Hochster's formula for local cohomology modules of Stanley-Reisner rings \cite[Thm. 5.3.8]{BH}. 

Concerning the data in column (3), the dimension is clear by construction. That  depth is invariant under edgewise subdivision follows from Munkres' result \cite[Thm. 3.1]{Munkres2} since   $|\Delta|  = |\Delta^{\langle r \rangle}|$.  Furthermore, $^{**}$ holds if  $\Delta$ is not a simplex.   If  $\Delta$ is a simplex and $d-1\geq 1$, then  $t_1(\KK[\Delta^{\langle r\rangle}])=2$ for every $r>1$. In  \ref{lem:minGen} we prove the statement concerning the maximal degree of a minimal generator and also specifies when one has $t_1(\KK[\Delta^{\langle r\rangle}])=t_1( \KK[\Delta])-1$ and  $t_1(\KK[\Delta^{\langle r\rangle}])=t_1(\KK[\Delta])$, respectively.  The formula for the regularity will be proved  in \ref{cor:reg}.  

Finally, concerning the data in column (4), the dimension and the value of $t_1$ are clear by construction.  The formula for regularity follows from  \cite[Prop. 2.6]{KW-multiplicity} and that the depth is invariant follows  again by Munkres' result \cite[Thm. 3.1]{Munkres2} since  $|\Delta| = |\sd(\Delta)|$ or directly from \cite[Cor. 2.5]{KW-multiplicity}.

\section{Barycentric subdivision} \label{sec:bary}

  In this section we provide the analysis of the Betti numbers
  $\beta_{i,i+j}(\KK[\sd(\Delta)])$ of the Stanley-Reisner ring of
  the barycentric subdivision of a simplicial complex $\Delta$. Besides its
  combinatorial appeal it is crucial for the proof of our main result \ref{edgewise:Betti}.

  We have recorded already in Table \ref{How}  how basic  algebraic invariants behave under barycentric subdivision.
   Let us record furthermore how the  projective dimension changes under  barycentric subdivision: 
 
 $$\pdim (\KK[\sd(\Delta])) =\pdim( \KK[\Delta])+\sum_{i\geq 1}f_i^{\Delta}.$$
    
Recall Hochster's formula for the graded Betti numbers of  Stanley-Reisner rings 
\cite[Thm. 5.5.1]{BH}:
 \begin{equation}
  \label{hochster}
  \beta_{i,i+j}(\KK[\Delta])=\sum_{\substack{W\subseteq [n]\\ \# W=i+j}}\dim_\KK \widetilde{H}_{j-1}(\Delta_W;\KK),
 \end{equation}
 where $\Delta_W = \{ F \in \Delta~:~F \subseteq W\}$.
In particular, if $\Delta$ is a simplicial complex on vertex set $\Omega$, 
\begin{equation*}
 \beta_{i,i+j}(\KK[\Delta])\neq 0\qquad \Leftrightarrow \qquad \exists W\subseteq \Omega,\;\# W=i+j \mbox{ such that } \widetilde{H}_{j-1}(\Delta_W;\KK)\neq 0.
\end{equation*}

We are interested in the range of the different strands in the minimal
resolution of $\KK[\sd(\Delta)]$. More precisely, for all $1\leq j\leq
\reg (\KK[\sd(\Delta)])$, we would like to identify  the set  of the $i$'s  such that 
$\beta_{i,i+j}(\KK[\sd(\Delta)]) \neq 0$ in terms of invariants of $\Delta$.

\subsection{Betti numbers for barycentric subdivisions of simplices}

We start our analysis with the study of the Betti numbers of the barycentric subdivision of the $(d-1)$-dimensional simplex $\Delta_{d-1}$. 
Note that in this case one has: 

$$\pdim (\KK[\sd(\Delta_{d-1}])) =2^d-d-1.$$

We start with a definition: 
\begin{definition}
\label{defmj} 
Given integers $d$ and $j$ such that $d \geq 1$ and $1 \leq j \leq d-1$ we set
$$m_j(d)=\left\{ \begin{array}{ll}
j,  & \mbox{ if }  j\leq d/2\\
2^{a+2} (c+d-j)-2d+j,  &  \mbox{ if } j\geq d/2 \mbox{ where } (2j-d)=a(d-j)+c \\  & \mbox{ with } a,c\in \NN  \mbox{ and } 0\leq c< d-j.
\end{array} 
\right.
$$ 
When  $d$ is clear from the context, we will suppress it from the notation and simply use $m_j$ for $m_j(d)$.  
\end{definition} 
 
 These numbers play an important role in the following results and, as we will see in the proofs, they arise by considerations related to the search of 
$(j-1)$-spheres as subcomplexes of $\sd(\Delta_{d-1})$ induced by as few vertices as possible.

\begin{theorem}\label{thm:Betti:bar}
  Let $d \geq 1$.  Let $\Delta_{d-1}$ be the $(d-1)$-simplex  and  let   $\sd(\Delta_{d-1})$ be its barycentric subdivision. Then:
  \begin{itemize}
    \item[(i)] If $1\leq j\leq \frac{d}{2}$, then
      $$
        \beta_{i,i+j}(\KK[\sd(\Delta_{d-1})])
        \begin{cases}
                 =0 \mbox{ for } 0\leq i\leq j-1,\\
           \neq 0 \mbox{ for } j\leq i\leq 2^d-d-1-m_{d-j-1},  \\
                 =0 \mbox{ for } 2^d-2d+j<i\leq 2^d-d-1.
        \end{cases}$$
    \item[(ii)] If $\frac{d}{2}<j\leq d-2$, then
      $$\beta_{i,i+j}(\KK[\sd(\Delta_{d-1})])
      \begin{cases}
                =0 \mbox{ for } 0\leq i \leq  j-1,  \\
         \neq 0 \mbox{ for } m_j\leq i \leq 2^d-2d+j,\\
             =0 \mbox{ for  } 2^d-2d+j<i \leq 2^d-d-1.
      \end{cases}$$
    \item[(iii)] $\beta_{i,i+d-1}(\KK[\sd(\Delta_{d-1})])\neq 0$ if and only if $i=2^d -d-1$.
  \end{itemize}
\end{theorem}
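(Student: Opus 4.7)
The plan is to apply Hochster's formula \ref{hochster} and reformulate the Betti number computations in terms of induced subposets of the Boolean lattice. More precisely, identify the vertex set $\Omega$ of $\sd(\Delta_{d-1})$ with the poset $P$ of nonempty subsets of $[d]$ ordered by inclusion; then an induced subcomplex $\sd(\Delta_{d-1})_W$ is precisely the order complex $\Delta(W)$ of $W\subseteq P$, and the task reduces to: for each $w=i+j$, decide whether some $W\subseteq P$ with $|W|=w$ has $\widetilde{H}_{j-1}(\Delta(W);\KK)\neq 0$. I would begin with (iii): if $[d]\in W$ then $[d]$ is a cone point of $\Delta(W)$, so $\Delta(W)$ is contractible; if $[d]\notin W$ then $W\subseteq P\setminus\{[d]\}$, whose order complex is $\sd(\partial\Delta_{d-1})\cong S^{d-2}$, and any proper subset of it has vanishing top-dimensional homology because the fundamental cycle of $S^{d-2}$ uses every vertex. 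This leaves $W=P\setminus\{[d]\}$ as the unique witness, yielding $i=2^d-d-1$ as claimed.

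For parts (i) and (ii), the \emph{left vanishing} statement asserts that no $W$ of cardinality strictly less than the stated threshold supports nonzero $\widetilde{H}_{j-1}$. A key structural observation that I would lean on throughout is that $\sd(\Delta_{d-1})$ is a flag complex (Table \ref{How}), so any $j+1$ pairwise comparable elements of $P$ form a chain whose order complex is a contractible $j$-simplex rather than the sphere $\partial\Delta_{j}$; this forces $|W|\ge j+2$ automatically. The sharp threshold $m_j+j$ marking the start of the non-vanishing range in (ii) then emerges from an extremal analysis: one constructs an explicit minimal-support induced $(j-1)$-sphere in $\sd(\Delta_{d-1})$ by carefully pairing incomparable elements across nested chains inside $P$, and proves optimality by showing that any smaller induced subposet must either be dimensionally deficient or contain an element comparable to all others, yielding a cone point.

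Next I would establish the \emph{right vanishing} statement: once $|P\setminus W|$ is below the threshold $2d-j-1$, the elements of $W$ supply so many short chains that every potential $(j-1)$-cycle in $\Delta(W)$ can be nullhomotoped via a cone-point argument on a well-chosen subposet of the form $\{A\in W:A\supseteq C\}$ (or its dual). The refined bound $2^d-d-1-m_{d-j-1}$ appearing in (i) should follow from the left-vanishing analysis applied to the complementary dimension $d-j-1$, exploiting the order-reversing complement map $A\mapsto[d]\setminus A$ on the proper nonempty subsets of $[d]$. Finally, \emph{non-vanishing in the middle range} is established by producing an explicit $W$ for every intermediate value of $|W|$; I would construct a monotone chain of subposets, each obtained from the previous one by adjoining a single element placed above or below the existing carrier of the $(j-1)$-cycle, so that the homology class is preserved. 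The main obstacle will be to simultaneously pin down the extremal constant $m_j(d)$ and to guarantee this interpolation is consistent at every step, since both tasks demand a detailed understanding of how order complexes of subposets of $P$ behave under single-vertex additions and deletions; that technical analysis will form the bulk of the work.
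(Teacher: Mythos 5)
Your skeleton matches the paper's: Hochster's formula reduces everything to reduced homology of induced subposets of the face poset of $\Delta_{d-1}$, (iii) follows from cone-point/fundamental-cycle considerations, left vanishing rests on the flag property, right vanishing comes via a duality, and the middle is an interpolation by adding vertices one at a time. However, your left-vanishing argument has a genuine quantitative gap. You observe that $j+1$ pairwise comparable elements of $P$ span a contractible simplex, concluding only $|W|\geq j+2$; but the theorem asserts $\beta_{i,i+j}=0$ for $i\leq j-1$, which requires $|W|=i+j\geq 2j$, i.e.\ a $(j-1)$-cycle in a flag complex must involve at least $2j$ vertices. The weak bound $j+2$ agrees with $2j$ only when $j\leq 2$; for $j\geq 3$ it is strictly smaller and does not yield the stated vanishing. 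The paper closes this gap with an adaptation of Gal's argument (Lemma~\ref{lowerbound}): in a flag complex, a non-bounding $j$-cycle $z$ must use two non-adjacent vertices $v,w$, one splits $z=vz_1+z_2$, shows $z_1$ is again a non-bounding $(j-1)$-cycle, and proceeds by induction. That inductive step is where the factor of $2$ per dimension comes from, and it is not a consequence of the naive ``no filled simplex'' observation.

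Your use of the order-reversing complement map $A\mapsto[d]\setminus A$ to transfer bounds to the complementary strand is morally the right idea and is in fact the combinatorial shadow of what the paper does, but the paper's actual tool is the algebraic Gorenstein duality $\beta_{i,i+j}=\beta_{2^d-d-1-i,\,2^d-2-i-j}$ on $\KK[\sd(\Delta_{d-1})]$ (valid because $\sd(\Delta_{d-1})$ is a cone over a sphere), which converts lower-bound results immediately into upper-bound results without any further argument about nullhomotoping cycles; your proposed direct cone-point nullhomotopy for right vanishing is not spelled out and would need to be made precise. Finally, the non-vanishing interpolation is the technical bulk and you have only sketched a strategy. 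The paper's mechanism is Proposition~\ref{thm:continuous}: it constructs joins of barycentrically subdivided simplex boundaries supported on disjoint blocks and then adds auxiliary vertex sets $D$, using an explicit closure operator on a product poset (via Bj\"orner's result on order complexes and closure operators) to prove that the homology class survives the additions. Your ``adjoining a single element placed above or below the existing carrier so that the homology class is preserved'' gestures at the same thing, but without the closure-operator argument it is far from clear that each single-vertex addition preserves homology. You also still need Lemma~\ref{lem:i1=0}, Lemma~\ref{lem:i1>1}, and the numerical computation of $m_j$ (Lemma~\ref{againmj}) to verify that the constructed non-vanishing intervals overlap and fill the entire claimed range; these elementary but unavoidable estimates are absent from your sketch.
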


The theorem identifies whether  $\beta_{i,i+j}(\KK[\sd(\Delta_{d-1})])$ is zero or not, except for the cases
\begin{itemize}
\item $1\leq j\leq \frac{d}{2}$ and $2^d-d-m_{d-j-1}\leq i\leq 2^d-2d+j$, and,
\item $\frac{d}{2}<j\leq d-2$ and $j\leq i\leq m_j-1$.
\end{itemize}

 We now formulate some crucial lemmas and propositions, which will lead to a proof of \ref{thm:Betti:bar}. First, we introduce some notation that will be frequently used throughout
this section. For $1\leq j\leq d-1$ we set:

\begin{equation*}
  l_j(d-1)=\min\{i~:~\beta_{i,i+j}(\KK[\sd(\Delta_{d-1})])\neq 0\}
\end{equation*}
and
\begin{equation*} 
u_j(d-1)=\max\{i~:~\beta_{i,i+j}(\KK[\sd(\Delta_{d-1})])\neq 0\}.
\end{equation*}

\noindent We determine  bounds for $l_j(d-1)$ and $u_j(d-1)$ by constructing
$(j-1)$-spheres as subcomplexes of $\sd(\Delta_{d-1})$ induced
by as few vertices as possible.

In general we can construct induced $(j-1)$-spheres of $\sd(\Delta_{d-1})$ in the
following way. Let $(i_1,\ldots,i_r)\in \NN^r$ such that
\begin{eqnarray}
 \label{e1} i_1+\cdots +i_r+(r-1) & = & j-1, \\
 \label{e2} i_1+\cdots +i_r+2r & \leq & d.
\end{eqnarray}
 
 Let $\mathbb{S}^n$ denote the $n$-dimensional sphere. Note, that \ref{e1} implies
\begin{equation*}
 \mathbb{S}^{j-1}\cong\mathbb{S}^{i_1}\ast\cdots \ast \mathbb{S}^{i_r},
\end{equation*}
 where $\ast$ denotes the join operator.
We set $W_{i_1}:=\{A~:~\emptyset\neq A\subsetneq [i_1+2]\}$ and for
$2 \leq \ell \leq r$ we let
\begin{equation}\label{e3}
W_{i_\ell} := \{A\cup [i_1+\cdots +i_{\ell-1}+2(\ell-1)]~:~\emptyset\neq A\subsetneq [i_1+\cdots+i_\ell+2\ell]\setminus[i_1+\cdots+i_{\ell-1}+2(\ell-1)]\}.
\end{equation}
As an abstract simplicial complex, the restriction $\sd(\Delta_{d-1})_{W_{i_\ell}}$
is isomorphic to the barycentric subdivision of the boundary of an
$(i_\ell+1)$-simplex. Hence, geometrically, $\sd(\Delta_{d-1})_{W_{i_\ell}}$ is an
$i_\ell$-sphere.
Moreover,
\begin{align}\label{hom1}
  \sd(\Delta_{d-1})_{\bigcup_{\ell=1}^r W_{i_\ell}}&=\AST_{\ell=1}^r \sd(\Delta_{d-1})_{W_{i_\ell}}\notag\\
  &\cong \AST_{\ell=1}^r\mathbb{S}^{i_\ell}=\mathbb{S}^{j-1}.
\end{align}
Note that 

$$\# \Big(\displaystyle{\bigcup_{\ell=1}^r} W_{i_\ell}\Big)=\displaystyle{\sum_{\ell=1}^r (2^{i_\ell+2}-2)}.$$

We observe the following: 

\begin{lemma}
\label{againmj}  The numbers $m_j$ defined in \ref{defmj}  satisfy  the following equality: 
\begin{equation} \label{eq:mj}
  m_j+j=\min\Big\{\sum_{\ell=1}^r (2^{i_\ell+2}-2)~:~  \begin{array}{l} 
  (i_1,\dots,i_r)\in \NN^r, \\
  i_1+\cdots +i_r+(r-1)=j-1, \\
  i_1+\cdots +i_r+2r\leq d 
  \end{array} \Big\}.
\end{equation}
\end{lemma}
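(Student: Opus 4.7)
The plan is to re-parametrize the problem and then exploit the convexity of $x \mapsto 2^x$. Setting $s_\ell := i_\ell + 1 \geq 1$ turns the equality $i_1+\cdots+i_r+(r-1)=j-1$ into $\sum_{\ell=1}^r s_\ell = j$, the inequality $i_1+\cdots+i_r+2r\leq d$ into $r \leq d-j$, and the quantity to be minimized into
$$\sum_{\ell=1}^r (2^{i_\ell+2}-2) \;=\; 2\sum_{\ell=1}^r 2^{s_\ell} - 2r.$$
The problem becomes: minimize $2\sum 2^{s_\ell} - 2r$ over compositions of $j$ into $r$ positive parts with $1 \leq r \leq \min(j, d-j)$.

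First I would show that the optimal $r$ is as large as possible, namely $r = \min(j, d-j)$. Given any feasible tuple containing some $s_\ell \geq 2$, replacing that part by the pair $(1, s_\ell - 1)$ increases $r$ by $1$ and changes $\sum 2^{s_\ell}$ by $2 + 2^{s_\ell-1} - 2^{s_\ell} = 2 - 2^{s_\ell-1}$, so the objective changes by $2 - 2^{s_\ell} \leq -2 < 0$. Hence splitting is always beneficial, and the minimum is attained at the largest admissible $r$.

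Next I would handle the two cases matching \ref{defmj}. When $j \leq d/2$, one has $r = j$, which forces $s_\ell = 1$ for all $\ell$, and the objective equals $4j - 2j = 2j = m_j + j$. When $j > d/2$, one has $r = d-j$, and the task reduces to minimizing $\sum 2^{s_\ell}$ over compositions of $j$ into $d-j$ positive parts. Writing $b_\ell := s_\ell - 1 \geq 0$ with $\sum b_\ell = 2j - d$, a smoothing argument (replacing any pair $(b_i, b_{i'})$ with $b_i \geq b_{i'}+2$ by $(b_i - 1, b_{i'}+1)$ strictly decreases $\sum 2^{b_\ell}$ by convexity of $2^x$) forces the $b_\ell$ to differ by at most one at the optimum. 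With $2j - d = a(d-j) + c$ and $0 \leq c < d-j$, this means $c$ of the $b_\ell$ equal $a+1$ and $d-j-c$ equal $a$, yielding $\sum 2^{b_\ell} = 2^a(c + d-j)$ and hence $\sum 2^{s_\ell} = 2^{a+1}(c+d-j)$. Substituting back yields the value $2^{a+2}(c+d-j) - 2(d-j) = m_j + j$.

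The principal obstacle is only bookkeeping: carefully justifying the two smoothing steps and checking that both case formulas agree at the boundary $j = d/2$ (where $a = c = 0$ and the Case~2 expression reduces to $4(d-j) - 2(d-j) = 2j$, matching Case~1). The substantive content of the lemma is entirely captured by the convexity of $2^x$ combined with the monotonicity in $r$.
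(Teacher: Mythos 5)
Your proposal is correct and follows essentially the same two-step strategy as the paper's own proof: first push $r$ to its maximal admissible value $\min(j,d-j)$ by repeatedly splitting a part (this is the paper's Claim~1, which appends a zero coordinate and decrements a positive one), then balance the remaining parts by a convexity/smoothing argument (the paper's Claim~2). The substitution $s_\ell = i_\ell + 1$ is a cosmetic re-parametrization that streamlines the constraint bookkeeping but does not change the underlying argument.
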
 
The proof of   \ref{againmj} will be given in the \ref{Appendix}. 
Hence we obtain: 

\begin{corollary}
\label{coromj} 
The number $m_j+j$ is an upper bound for the minimal cardinality
of a subset $W$ of the set of vertices of $\sd(\Delta_{d-1})$ such that $\widetilde{H}_{j-1}(\sd(\Delta_{d-1})_W;\KK) \neq 0$.
In particular, $m_j$ is an upper bound for $l_j(d-1)$ for every $j=1,\dots,d-1$. 
\end{corollary}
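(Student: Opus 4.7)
The plan is to recognize that essentially all the work has been done in the paragraph preceding the corollary and in \ref{againmj}; the proof is just a bookkeeping assembly.

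First, I would apply \ref{againmj} to fix a tuple $(i_1,\ldots,i_r)\in\NN^r$ attaining the minimum on the right-hand side of \ref{eq:mj}, so that $i_1+\cdots+i_r+(r-1)=j-1$, $i_1+\cdots+i_r+2r\leq d$, and $\sum_{\ell=1}^r(2^{i_\ell+2}-2) = m_j + j$. The constraint $i_1+\cdots+i_r+2r\leq d$ ensures that the nested indexing sets $[i_1+\cdots+i_\ell+2\ell]$ used in \ref{e3} are genuinely subsets of $[d]$, so the sets $W_{i_1},\ldots,W_{i_r}$ are well-defined and pairwise disjoint subsets of the vertex set of $\sd(\Delta_{d-1})$.

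Next, I would set $W := \bigcup_{\ell=1}^r W_{i_\ell}$. By the discussion leading to \ref{hom1}, the induced subcomplex $\sd(\Delta_{d-1})_{W_{i_\ell}}$ is isomorphic (as an abstract simplicial complex) to the barycentric subdivision of the boundary of an $(i_\ell+1)$-simplex, hence a triangulation of $\SP^{i_\ell}$; furthermore, the disjointness of the index blocks forces the induced subcomplex on $W$ to decompose as the join of these pieces, so
\[
  \sd(\Delta_{d-1})_W \;\cong\; \AST_{\ell=1}^r \SP^{i_\ell} \;\cong\; \SP^{j-1},
\]
using $i_1+\cdots+i_r+(r-1)=j-1$. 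In particular, $\widetilde{H}_{j-1}(\sd(\Delta_{d-1})_W;\KK)\cong \KK\neq 0$. Since $|W|=\sum_{\ell=1}^r(2^{i_\ell+2}-2)=m_j+j$, this proves the first statement: $m_j+j$ is an upper bound for the minimal cardinality of a vertex set $W$ supporting non-vanishing $(j-1)$-st reduced homology of the induced subcomplex.

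Finally, for the ``In particular'' clause, I would invoke Hochster's formula \ref{hochster}. Taking $i=m_j$ and the $W$ constructed above with $|W|=i+j$, the term in \ref{hochster} corresponding to $W$ contributes a positive summand, so
\[
  \beta_{m_j,\,m_j+j}(\KK[\sd(\Delta_{d-1})]) \;\geq\; \dim_\KK \widetilde{H}_{j-1}(\sd(\Delta_{d-1})_W;\KK) \;\geq\; 1,
\]
which yields $l_j(d-1)\leq m_j$. There is no real obstacle here: the only non-trivial input is \ref{againmj}, whose combinatorial optimization is deferred to the appendix; everything else is a direct application of the join-of-spheres construction and Hochster's formula.
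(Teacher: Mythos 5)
Your proposal is correct and follows the same route the paper intends: the paper simply writes ``Hence we obtain:'' after Lemma \ref{againmj}, leaving to the reader exactly the bookkeeping you spell out (pick a minimizing tuple, form $W=\bigcup_\ell W_{i_\ell}$, invoke \ref{hom1} and the cardinality count, then Hochster's formula \ref{hochster}). One small imprecision: you say the join decomposition is ``forced by disjointness of the index blocks,'' but disjointness alone does not force a join — what does is that any two vertices lying in distinct blocks $W_{i_\ell}$ and $W_{i_{\ell'}}$ (with $\ell<\ell'$) are comparable as subsets (the former lies strictly inside $[i_1+\cdots+i_\ell+2\ell]$, the latter contains that whole interval), so every pair of cross-block vertices is joined by an edge, and since $\sd(\Delta_{d-1})$ is flag the restriction decomposes as the join in \ref{hom1}. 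Since you also cite the paper's discussion and \ref{hom1} for this fact, the argument stands, but the stated reason should be comparability, not mere disjointness.
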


Our next aim is to improve the  bounds for  $l_j(d-1)$ and
to provide bounds for $u_j(d-1)$. To this end we need two lemmas.
As we have already observed in Table \ref{How}  the barycentric subdivision of any simplicial complex  is a flag complex; i.e., all its minimal non-faces are of size $2$. In particular: 
\begin{lemma}\label{lem:flag}
  Let $\Delta$ be a $(d-1)$-dimensional simplicial complex on vertex set $[n]$ and let
  $W\subseteq \Delta\setminus \{\emptyset\}$ be a
  subset of the vertex set of $\sd(\Delta)$. Then
  $\sd(\Delta)_W$ is a flag complex or a full simplex
\end{lemma}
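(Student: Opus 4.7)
The plan is to unwind the combinatorial description of the barycentric subdivision and directly verify the flag condition on the induced subcomplex. Recall that the vertices of $\sd(\Delta)$ are the non-empty faces of $\Delta$, and a subset $\{F_1,\ldots,F_k\} \subseteq \Delta \setminus \{\emptyset\}$ spans a face of $\sd(\Delta)$ precisely when, after a suitable relabeling, $F_{\sigma(1)} \subsetneq F_{\sigma(2)} \subsetneq \cdots \subsetneq F_{\sigma(k)}$ holds for some permutation $\sigma$; in other words, the $F_i$'s form a chain under strict inclusion.

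First I would observe that an edge $\{F_i,F_j\}$ of $\sd(\Delta)$ is exactly the statement that $F_i$ and $F_j$ are comparable with respect to strict inclusion, i.e.\ $F_i \subsetneq F_j$ or $F_j \subsetneq F_i$. Now suppose $\{F_1,\ldots,F_k\} \subseteq W$ is a set of vertices of $\sd(\Delta)_W$ such that every pair $\{F_i,F_j\}$ is an edge of $\sd(\Delta)_W$. Then the $F_i$'s are pairwise comparable in the poset $(\Delta\setminus\{\emptyset\},\subsetneq)$. Since strict inclusion is transitive and antisymmetric, pairwise comparability in a partial order forces total comparability, so $\{F_1,\ldots,F_k\}$ is a chain and therefore a face of $\sd(\Delta)$. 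Because all of its vertices lie in $W$, it is actually a face of $\sd(\Delta)_W$. Consequently every clique in the $1$-skeleton of $\sd(\Delta)_W$ is a face, i.e.\ $\sd(\Delta)_W$ is flag.

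The "or a full simplex" clause in the statement is automatically subsumed by flagness, since a full simplex on any vertex set is trivially a flag complex; the dichotomy in the lemma merely emphasizes the degenerate case in which $W$ itself happens to be a chain in $\Delta$. I do not anticipate any genuine technical obstacle here: the proof is essentially a one-line invocation of transitivity of inclusion, once the definitions of $\sd(\Delta)$, of the induced subcomplex $\sd(\Delta)_W$, and of a flag complex have been lined up correctly.
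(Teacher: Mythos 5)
Your proof is correct and captures the substance of what the paper invokes. The paper does not actually write out a proof of this lemma: it simply refers to the (standard) fact, recorded in Table~\ref{How}, that the barycentric subdivision of any simplicial complex is a flag complex, and then implicitly uses the easy observation that an induced subcomplex of a flag complex is again flag. Your argument unpacks both steps into a single direct verification from the order-complex description of $\sd(\Delta)$: an edge of $\sd(\Delta)_W$ is a pair of comparable faces of $\Delta$, pairwise comparability in a poset forces a chain by transitivity, and every such chain is by definition a face of $\sd(\Delta)$, hence of $\sd(\Delta)_W$ once all its vertices lie in $W$. Your remark that the ``or a full simplex'' clause is subsumed by flagness is also correct and matches the paper's convention (a flag complex is one whose minimal non-faces all have cardinality two, and a full simplex has none). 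So you are doing the same thing as the paper, only more explicitly; there is no gap.
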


We prove the next lemma by adapting arguments used for the proof
of \cite[Lem. 2.1.14]{Gal}.

\begin{lemma}
  \label{lowerbound}
  Let $z$ be a $j$-cycle in a $(d-1)$-dimensional flag simplicial
  complex $\Delta$, which is not a boundary.
  Then the simplices in the support of $z$ contain at least $2(j+1)$
  vertices.
\end{lemma}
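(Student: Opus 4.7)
The plan is to reduce the lemma to a purely homological claim about flag complexes. Let $W$ denote the set of vertices appearing in the support of $z$. Then $z$ is a $j$-chain in the induced subcomplex $\Delta_W$, with $\partial z = 0$ computed equally in $\Delta$ or in $\Delta_W$. Any bounding $(j+1)$-chain for $z$ inside $\Delta_W$ would witness $z$ as a boundary in $\Delta$, contradicting the hypothesis. Thus $z$ represents a nonzero class in $\widetilde{H}_j(\Delta_W;\KK)$. Since an induced subcomplex of a flag complex is flag, the lemma reduces to the following auxiliary statement: every flag simplicial complex $\Sigma$ on at most $2j+1$ vertices satisfies $\widetilde{H}_j(\Sigma;\KK)=0$.

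I would prove this auxiliary statement by induction on $j$, with a nested induction on the number of vertices $n$ of $\Sigma$. The case $j=0$ is clear, since a nonempty flag complex on at most one vertex is a point. The inner base cases $n\leq j+1$ are trivial: either $\Sigma$ contains no $j$-face, or $\Sigma$ is the full $j$-simplex (which is contractible). For the inductive step, fix a vertex $v$ of $\Sigma$. If $v$ is adjacent to every other vertex, then flagness forces $\Sigma = \{v\}\AST\Sigma_{V(\Sigma)\setminus\{v\}}$, a cone, so all reduced homology vanishes. Otherwise choose $w\in V(\Sigma)$ with $\{v,w\}\notin\Sigma$, and apply the reduced Mayer--Vietoris sequence to the decomposition $\Sigma = \st_\Sigma(v)\cup \Sigma_{V(\Sigma)\setminus\{v\}}$, whose intersection is $\lk_\Sigma(v)$:
\begin{equation*}
  \widetilde{H}_j(\Sigma_{V(\Sigma)\setminus\{v\}};\KK)\;\longrightarrow\;\widetilde{H}_j(\Sigma;\KK)\;\longrightarrow\;\widetilde{H}_{j-1}(\lk_\Sigma(v);\KK).
\end{equation*}
The star $\st_\Sigma(v)=\{v\}\AST\lk_\Sigma(v)$ is contractible and drops out of the sequence. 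The subcomplex $\Sigma_{V(\Sigma)\setminus\{v\}}$ is flag on $n-1\leq 2j$ vertices, so by the nested induction on $n$ its $j$-th reduced homology vanishes. The link $\lk_\Sigma(v)$ is itself flag (a standard consequence of the flagness of $\Sigma$), and since $w$ is not a neighbor of $v$ it is supported on at most $n-2\leq 2j-1=2(j-1)+1$ vertices; by the outer induction on $j$ its $(j-1)$-th reduced homology vanishes. Exactness now forces $\widetilde{H}_j(\Sigma;\KK)=0$, completing the induction.

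The main obstacle is the auxiliary flag-complex claim itself, which carries the genuine combinatorial-topological content of the lemma; the passage from a nonbounding cycle of $\Delta$ to a nontrivial homology class in an induced subcomplex on its vertex support is essentially formal. It is worth noting that the bound $2(j+1)$ is sharp: it is attained by the boundary of the $(j+1)$-dimensional cross-polytope, which is a flag triangulation of the $j$-sphere on exactly $2(j+1)$ vertices.
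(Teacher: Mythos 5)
Your proof is correct, and it takes a genuinely different route from the paper's. The paper adapts an argument of Gal and inducts directly on $j$: it assumes a nonbounding $j$-cycle $z$ with minimal vertex support, uses flagness to find two vertices $v,w$ in the support that span a non-edge, splits $z = vz_1 + z_2$ along $v$, shows $z_1$ is a nonbounding $(j-1)$-cycle (the minimality of $z$ is invoked to rule out $z_1$ being a boundary), applies the inductive hypothesis to $z_1$, and then counts the two extra vertices $v,w$ absent from the support of $z_1$. You instead recast the statement as a vanishing theorem, namely that $\widetilde{H}_j(\Sigma;\KK)=0$ for any flag complex $\Sigma$ on at most $2j+1$ vertices, and prove it by a double induction on $(j,n)$ via the reduced Mayer--Vietoris sequence for the decomposition $\Sigma = \st_\Sigma(v) \cup \Sigma_{V(\Sigma)\setminus\{v\}}$ with intersection $\lk_\Sigma(v)$. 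The two proofs lean on the same structural inputs -- flagness produces a vertex $v$ with a non-neighbor $w$, and the link of $v$, which is again flag and misses both $v$ and $w$, carries the dimension-shifted part of the argument -- but your version replaces the paper's somewhat delicate ``minimal cycle'' step with the transparency of exactness, and the inner induction on the vertex count keeps the argument cleanly self-contained. Your closing observation that the boundary of the $(j+1)$-dimensional cross-polytope attains the bound $2(j+1)$, showing sharpness, is a useful extra that the paper does not record.
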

\begin{proof}
  We proceed by induction on $j$. If $z$ is  $0$-cycle, then there are
  at least $2$ simplices in its support.
  If $j=1$, then there exist at least $3$ vertices in the simplices in
  the support of $z$. If $z$ contains exactly $3$ vertices in its
  support, $\Delta$ cannot be flag.
  Let $j\geq 2$. Assume that $z$ is a $j$-cycle, which is not a
  boundary, and  assume that the set of vertices in the simplices in the
  support of $z$ is minimal. Then in the simplices in the support of
  $z$, there exist two vertices $v$ and $w$ that are not connected by an
  edge in $\Delta$. Otherwise, the vertices in the simplices from the
  support of $z$ form a simplex and hence $z$ is a boundary.
  Let $z=\displaystyle{\sum_{\sigma\in\Delta}}a_\sigma \sigma$.
  We write $z=vz_1+z_2$, where
  $$z_1=\displaystyle{\sum_{\substack{v\in \sigma\in \Delta\\a_{\sigma}\neq0}}}a_{\sigma}
   (\sigma\setminus\{v\}) \mbox{ and }z_2=\displaystyle{\sum_{\substack{v\notin\sigma\in\Delta
   \\ a_{\sigma}\neq 0}}}a_{\sigma}\sigma.$$
  It follows that $z_1$ has to be a $(j-1)$-cycle since otherwise
  $\partial z\neq 0$. Moreover, $z_1$ cannot be a boundary since
  otherwise the set of vertices in the simplices in the support of $z$
  is not minimal. Indeed, if $z_1=\partial z_3$, then $z_3+z_2$
  is a $j$-cycle, which is not a boundary. In particular, this
  contradicts the minimality of $z$. We conclude by induction, that the
  simplices in the support of $z_1$ contain at least $2j$ vertices.
  Since the simplices in the support of $z$ contain the two additional
  vertices $v$ and $w$, there are at least $2(j+1)$ vertices in the
  simplices in the support of $z$.
\end{proof}

Now using Hochster's formula \ref{hochster}, \ref{coromj}, \ref{lem:flag} and \ref{lowerbound},
we obtain the following bounds on $l_j(d-1)$ and $u_j(d-1)$.

\begin{proposition} 
  \label{le:barysimplex} ~
  
  \begin{itemize}
    \item[(i)] For $1\leq j\leq \frac{d}{2}$ one has $l_j(d-1)=j.$ 
    \item[(ii)]  For $\frac{d}{2}<j\leq d-2$ one has  $j\leq l_j(d-1)\leq m_j.$
    \item[(iii)] For $1\leq j\leq \frac{d}{2}$ one has $ 2^d-d-1-m_{d-j-1}\leq u_j(d-1) \leq 2^d-2d+j.$
    \item[(iv)] For $\frac{d}{2}<j\leq d-2$ one has $u_j(d-1) = 2^d-2d+j$.
    \item[(v)] $l_{d-1}(d-1)=u_{d-1}(d-1)=2^{d}-d-1$.
  \end{itemize}
\end{proposition}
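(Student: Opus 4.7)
My plan is to translate every assertion, via Hochster's formula \ref{hochster}, into an extremal statement about vertex subsets $W$ of the vertex set $V = 2^{[d]}\setminus\{\emptyset\}$ of $\sd(\Delta_{d-1})$ for which $\widetilde{H}_{j-1}(\sd(\Delta_{d-1})_W;\KK)\neq 0$: $l_j(d-1)+j$ is the minimum, and $u_j(d-1)+j$ the maximum, such cardinality. For (i) and (ii) the lower bound $l_j(d-1)\geq j$ follows from \ref{lem:flag} (so that the induced subcomplex is flag) combined with \ref{lowerbound} (so that any non-boundary $(j-1)$-cycle has support on at least $2j$ vertices). For the matching upper bound in (i) I instantiate the sphere-join construction \ref{e1}--\ref{e3} with $r=j$ and $(i_1,\ldots,i_r)=(0,\ldots,0)$: both defining constraints reduce to $2j\leq d$, and the outcome is an induced $(j-1)$-sphere on $2j$ vertices, giving $l_j(d-1)\leq j$. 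The upper bound in (ii) is just \ref{coromj}.

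Part (v) I handle by a case split on whether $[d]\in W$. If $[d]\in W$, every chain in $W$ extends by $[d]$, so $\sd(\Delta_{d-1})_W$ is a cone with apex $[d]$ and is contractible. If $[d]\notin W$, then $\sd(\Delta_{d-1})_W$ is an induced subcomplex of the $(d-2)$-sphere $\sd(\partial\Delta_{d-1})$, and a proper induced subcomplex of a triangulated sphere carries no reduced top homology. Hence $\widetilde{H}_{d-2}(\sd(\Delta_{d-1})_W)\neq 0$ forces $W=V\setminus\{[d]\}$, pinning $|W|=2^d-2$ and $l_{d-1}(d-1)=u_{d-1}(d-1)=2^d-d-1$.

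For the lower bounds on $u_j$ in (iii) and (iv), I exhibit explicit complements $T=V\setminus W$. The prototype is $T=\{[d]\}\cup A$, with $A$ an antichain in $V\setminus\{[d]\}$: since open stars of pairwise incomparable vertices are pairwise disjoint, $\sd(\Delta_{d-1})_W$ is the $(d-2)$-sphere with $|A|$ disjoint open disks removed, i.e. a wedge of $|A|-1$ copies of $S^{d-3}$. Placing the antichain at different layers of the Boolean lattice, and in the case of (iii) combining the construction with an instance of the sphere-join construction \ref{e1}--\ref{e3} applied to the complement, yields the required $T$ of size $2(d-j)-1$ for (iv) and of size $d-j+m_{d-j-1}$ for (iii).

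The main obstacle is the upper bound $u_j(d-1)\leq 2^d-2d+j$ in (iii) and (iv), namely that $|T|<2(d-j)-1$ must imply $\widetilde{H}_{j-1}(\sd(\Delta_{d-1})_W)=0$. Since $\sd(\Delta_{d-1})$ is contractible, the long exact sequence of the pair $(\sd(\Delta_{d-1}),\sd(\Delta_{d-1})_W)$ identifies $\widetilde{H}_{j-1}(\sd(\Delta_{d-1})_W)$ with $H_j(\sd(\Delta_{d-1}),\sd(\Delta_{d-1})_W)$. I then excise and apply the nerve lemma to the cover $\{\st(v):v\in T\}$ of the removed region: pairwise intersections of stars are themselves stars and hence contractible, and the nerve is exactly the induced subcomplex $\sd(\Delta_{d-1})_T$. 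The non-vanishing of the relative homology thus localizes a cycle on the complementary flag complex $\sd(\Delta_{d-1})_T$, to which a dual of the cycle-support estimate \ref{lowerbound} can be applied, forcing $|T|\geq 2(d-j)-1$. Carrying out the excision and the dualization of \ref{lowerbound} with the correct homological shift, and checking that the resulting threshold is exact for $j>d/2$ (giving equality in (iv)) while only one-sided in the regime $j\leq d/2$ (so that a gap remains between $m_{d-j-1}$ and $j$ in (iii)), is the technically delicate step.
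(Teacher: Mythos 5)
Your treatment of (i), (ii) matches the paper's: the lower bound $l_j\geq j$ from \ref{lem:flag} and \ref{lowerbound}, the upper bound from the explicit join construction (this is precisely \ref{coromj}). Your case split on whether $[d]\in W$ for part (v) is a correct direct topological argument and is a nice alternative to what the paper does. The trouble is in (iii) and (iv).

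The paper's proof hinges on a single observation you do not use: $\sd(\Delta_{d-1})$ is a cone over the flag sphere $\sd(\partial\Delta_{d-1})$, so $\KK[\sd(\Delta_{d-1})]$ is Gorenstein, and graded Poincar\'e duality on the Koszul homology of a Gorenstein ring gives
\[
\beta_{i,i+j}(\KK[\sd(\Delta_{d-1})]) = \beta_{2^d-d-1-i,\;2^d-2-i-j}(\KK[\sd(\Delta_{d-1})]).
\]
This swaps the strand $j$ with the strand $d-1-j$ and turns (iii), (iv) and (v) into restatements of (ii), (i), and $\beta_{i,i}=\delta_{i,0}$, respectively; no new construction or topological argument is needed.

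Trying instead to prove (iii) and (iv) from scratch, as you do, runs into two genuine gaps. First, on the upper bound: the long exact sequence of $(\sd(\Delta_{d-1}),\sd(\Delta_{d-1})_W)$ does give $\widetilde H_{j-1}(\sd(\Delta_{d-1})_W)\cong H_j(\sd(\Delta_{d-1}),\sd(\Delta_{d-1})_W)$, and the nerve lemma does give $|\sd(\Delta_{d-1})|\setminus|\sd(\Delta_{d-1})_W|\simeq \sd(\Delta_{d-1})_T$. But these two facts do not combine by "excision" to identify the relative homology with homology of $\sd(\Delta_{d-1})_T$; excision compares the pair $(X,X_W)$ to a pair $(N,N\cap X_W)$, not to the absolute homology of the removed region, and the Lefschetz-duality version would require $X_W\supseteq\partial X$, which fails. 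The correct route in this style is the one your part (v) already uses: after discarding the apex $[d]$ (if $[d]\in W$ the restriction is a cone, hence acyclic), $\sd(\Delta_{d-1})_W$ sits in the flag $(d-2)$-sphere $\sd(\partial\Delta_{d-1})$, and combinatorial Alexander duality there gives $\widetilde H_{j-1}(\sd(\partial\Delta_{d-1})_W)\cong\widetilde H_{d-2-j}(\sd(\partial\Delta_{d-1})_{T'})$ for $T'=T\setminus\{[d]\}$; then \ref{lowerbound} applied to the flag complex $\sd(\partial\Delta_{d-1})_{T'}$ yields $\#T'\geq 2(d-1-j)$, i.e. $\#T\geq 2(d-j)-1$, as required. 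Second, on the lower bound for $u_j$: your antichain prototype $T=\{[d]\}\cup A$ always produces a wedge of $(d-3)$-spheres, hence only the strand $j=d-2$; "placing the antichain at different layers" does not change this dimension, and the promised combination with the join construction to reach general $j$ in (iv) and the bound $d-j+m_{d-j-1}$ in (iii) is not worked out and is precisely the part that the Gorenstein (or Alexander) duality makes unnecessary. As written, (iii) and (iv) are not proved.
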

\begin{proof}
  Since $\sd(\Delta_{d-1})$ is a cone over the barycentric subdivision of the boundary of $\Delta_{d-1}$,
   which is a triangulation of a sphere, it follows that $\sd(\Delta_{d-1})$ is Gorenstein.
   From graded Poincar\'e duality on the Koszul homology of Gorenstein rings (see \cite[Thm. 3.4.5]{BH})
   we deduce that
   \begin{equation}\label{Gorenstein}
   \beta_{i,i+j}(\KK[\sd(\Delta_{d-1})]) =
   \beta_{2^d-d-1-i,2^d-2-i-j}(\KK[\sd(\Delta_{d-1})]).
   \end{equation}
   Hence assertions (i) and (iv) are equivalent, and similarly assertions (ii) and (iii) are equivalent. We will now show (i) and (ii).

 By  \ref{coromj}  we   know that 
  $l_j(d-1) \leq m_j$ and hence in particular $l_j(d-1) \leq j$ if $1\leq j\leq \frac{d}{2}$. To show equality in (i)
  note that by \ref{lem:flag} all induced
  subcomplexes of $\sd(\Delta_{d-1})$ are flag. Hence we can infer from
  \ref{lowerbound} that a
  $(j-1)$-cycle of $\sd(\Delta_{d-1})$ is supported on at least $2j$
  vertices which implies by Hochster's formula \ref{hochster} that
  $\beta_{i,i+j}(\KK[\sd(\Delta_{d-1})]) = 0$
  for $1 \leq i \leq j-1$ and equality in (i) follows.
The same argument also shows the lower bound for $l_j(d-1)$ in (ii).

  Finally  (v) follows immediately  from \ref{Gorenstein} since $\beta_{i,i}(\KK[\sd(\Delta_{d-1})])=0$ for every $i>0$ and $\beta_{0,0}(\KK[\sd(\Delta_{d-1})])=1$. 
\end{proof}

After establishing the bounds on $l_j(d-1)$ and $u_j(d-1)$ we next turn
to a sequence of lemmas showing that there are no internal $0$s in the intervals we have identified.

\begin{lemma}\label{le:Betti:bar1}
  Let $1 \leq j \leq d-2$. Then we have
  $\beta_{i,i+j}(\KK[\sd(\Delta_{d-1})]) \neq 0$ for
  $$ 2^{j+1}-2 -j \leq i \leq 2^d - 2^{d-j}-1-j.$$
\end{lemma}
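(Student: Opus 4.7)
The plan is to apply Hochster's formula \ref{hochster}, which reduces the claim to producing, for each integer $n$ with $2^{j+1}-2 \leq n \leq 2^d-2^{d-j}-1$, a vertex subset $W$ of $\sd(\Delta_{d-1})$ with $|W|=n$ and $\widetilde{H}_{j-1}(\sd(\Delta_{d-1})_W;\KK) \neq 0$. I will start from the ``base sphere'' $W_0 := \{A : \emptyset \neq A \subsetneq [j+1]\}$, for which $\sd(\Delta_{d-1})_{W_0} \cong \sd(\partial\Delta_j) \cong \mathbb{S}^{j-1}$ and $|W_0| = 2^{j+1}-2$, and enlarge $W_0$ by adjoining one vertex at a time, preserving $\widetilde{H}_{j-1}$ throughout.

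The vertices to adjoin are the \emph{extension vertices} $v_{A,C} := A\cup C$ with $A \in W_0$ and $\emptyset \neq C \subseteq \{j+2,\ldots,d\}$; there are $(2^{j+1}-2)(2^{d-j-1}-1)$ of them, so their incremental incorporation yields every target value of $|W|$ in the stated range. I fix any linear order on these extension vertices refining the partial order by $(|C|, |A|)$ lexicographically ascending, and write $W_m$ for $W_0$ together with the first $m$ extension vertices. The heart of the proof will be to show that each single-vertex enlargement preserves the reduced homology, which reduces to showing the link of the adjoined vertex in the new complex is contractible.

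With this order, the link of $v = v_{A,C}$ in $\sd(\Delta_{d-1})_{W_m \cup \{v\}}$ is the order complex of the subposet of $W_m$ consisting of vertices comparable to $v$. No previously adjoined vertex is a superset of $v$: a candidate $u = A' \cup C' \supsetneq v$ would satisfy $A' \supseteq A$ and $C' \supseteq C$, hence would be adjoined strictly after $v$. Under the identification $A' \cup C' \leftrightarrow (A', C')$, the link therefore coincides with $L_v := (P_A \times Q_C) \setminus \{(A,C)\}$, where $P_A := \{A' : \emptyset \neq A' \subseteq A\}$ and $Q_C := 2^C$. The poset self-map $f : L_v \to L_v$ defined by $f(A', C') := (A', \emptyset)$ is well-defined (since $C \neq \emptyset$ ensures $(A', \emptyset) \neq (A, C)$) and satisfies $f \leq \mathrm{id}_{L_v}$; since comparable poset maps induce homotopic maps on order complexes, $L_v$ deformation-retracts onto its subposet $P_A \times \{\emptyset\} \cong P_A$, which has maximum $A$ and hence contractible order complex. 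Thus $L_v$ is contractible, completing the induction.

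The main subtlety will be arranging the adjunction order so that the link of each new vertex is exactly this ``removed-top'' poset $(P_A \times Q_C) \setminus \{(A,C)\}$, free from supersets contributed by previously adjoined vertices; a careless ordering would sneak in extra comparabilities and spoil the clean deformation-retract argument. Once this is secured, the remainder is a short poset computation together with Hochster's formula.
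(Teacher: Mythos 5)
Your proof is correct, and it proceeds by a genuinely different route than the paper's. The paper picks $G=[j+1]$ and a $(j-1)$-face $F\subsetneq G$ and makes one global argument: the image $\sd(z)$ of the fundamental cycle of $\sd(\partial G)$ passes through the vertex $F$ (the barycenter of $F$), and no $j$-simplex of $\sd(\Delta_{d-1})$ through $F$ can live in the restriction to $V_{<G}\cup W$ when $W$ consists of faces incomparable to $F$, because such a $j$-simplex would have to contain a vertex that is a strict superset of $F$; so $\sd(z)$ cannot bound, for every admissible $W$ simultaneously. You instead build up the vertex set one vertex at a time, ordering the extension vertices $v_{A,C}$ by $(|C|,|A|)$ so that no previously adjoined vertex is a superset of $v_{A,C}$, and then show the link of each newly adjoined vertex is the removed-top product poset $(P_A\times Q_C)\setminus\{(A,C)\}$, which retracts to the contractible $P_A$ via the idempotent monotone map $(A',C')\mapsto(A',\emptyset)$ comparable to the identity; a Mayer--Vietoris step then preserves $\widetilde{H}_{j-1}$ across the enlargement. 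The poset machinery you use (closure-type operators inducing homotopy equivalences, as in \cite[Cor.\ 10.12]{Bjoerner}) is exactly what the paper invokes for \ref{thm:continuous}, but you apply it locally, link by link, rather than once on the whole complex. Both approaches are valid: the paper's is shorter and needs no enumeration order; yours is more modular and makes the homology preservation mechanism explicit at each step. Incidentally, both arguments actually reach vertex sets of size $2^d-2^{d-j}$ (your $W_0$ together with all $(2^{j+1}-2)(2^{d-j-1}-1)$ extension vertices has $(2^{j+1}-2)\cdot 2^{d-j-1}=2^d-2^{d-j}$ elements, and a direct count also gives $\#(V_{<G}\cup V_F)=2^d-2^{d-j}$ rather than $2^d-2^{d-j}-1$), so the lemma's upper bound $i\leq 2^d-2^{d-j}-1-j$ is in fact one weaker than what either proof delivers; this off-by-one is harmless for the use of the lemma in the proof of \ref{thm:Betti:bar}.
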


\begin{proof}
  Let $G$ be a $j$-dimensional face of $\Delta_{d-1}$ and consider
  the set $V_{<G}$ of vertices of $\sd(\Delta_{d-1})$ that correspond to
  faces of $\Delta_{d-1}$ properly contained in $G$. Then
  $\sd(\Delta_{d-1})_{V_{<G}}$ is the barycentric subdivision of the
  boundary of $G$ and hence triangulates a $(j-1)$-sphere and
  $\widetilde{H}_{j-1}(\sd(\Delta_{d-1})_{V_{<G}};\KK) \neq 0$.
  Let $F$ be an arbitrary face of $G$ of dimension $j-1$.
  Then $F$ is contained in the support of the homology
  $(j-1)$-cycle $z$ of the boundary of $G$.
  Let $V_F$ be the set of vertices in
  $\sd(\Delta_{d-1})$
  that correspond to faces of $\Delta_{d-1}$ that are neither
  subsets nor supersets of $F$. For any subset $W$ of $V_F$ the
  restriction of $\sd(\Delta_{d-1})$ to $V_{<G} \cup W$ has nonvanishing
  $(j-1)$\textsuperscript{st} reduced homology group. For this
  consider the image $\sd(z)$ of $z$ in the chain complex of
  $\sd(\Delta_{d-1})_{V_{<G} \cup W}$. Then $\sd(z)$ contains any
  $(j-1)$-simplex of $\sd(2^F)$ in its support. Each of these
  simplices contains the vertex $F$. But, if $\sd(z)$ were a boundary, then it could only be
  the boundary of a chain that contained a $j$-simplex with $F$ as
  a vertex. Since $F$ is $(j-1)$-dimensional, this simplex must also
  contain a vertex corresponding to a proper superset of $F$,
  but such a simplex does not exist in
  $\sd(\Delta_{d-1})_{V_{<G} \cup W}$.

  This and elementary counting shows that
  $\beta_{i,i+j}(\KK[\sd(\Delta_{d-1})]) \neq 0$
  for
  $$ 2^{j+1}-2 -j = \# V_{<G} -j \leq i \leq \# (V_{<G} \cup V_F)-j = 2^d - 2^{d-j}-1-j.$$
\end{proof}

\begin{proposition}\label{thm:continuous}
  Let $r\geq 2$ and $1 \leq j\leq d-2$. Let
 
  $(i_1,\ldots,i_r)\in \NN^r$  such that
  \begin{itemize}
    \item[(i)] $i_1+\cdots +i_r+(r-1)=j-1$.
    \item[(ii)] $i_1+\cdots +i_r+2r\leq d$.
  \end{itemize}
  Then $\beta_{i,i+j}(\KK[\sd(\Delta_{d-1})])\neq 0$ for all
  \begin{equation*}
    \sum_{\ell=1}^r (2^{i_{\ell}+2}-2)-j\leq i\leq
        \sum_{\ell=1}^r(2^{i_{\ell}+2}-2)+(2^{i_r+2}-2)2^{i_2+\ldots +i_{r-1}+2r-4}-j.
  \end{equation*}
\end{proposition}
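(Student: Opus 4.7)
The plan is to adapt the strategy of \ref{le:Betti:bar1} to the multi-factor join setting. I will construct a base vertex set $V_0$ carrying the join $(j-1)$-sphere of \ref{hom1}, together with an auxiliary set $W$ of extra vertices disjoint from $V_0$, and show that the induced subcomplex $\sd(\Delta_{d-1})_{V_0 \cup W'}$ has non-vanishing $(j-1)$-st reduced homology for every $W' \subseteq W$. Varying $|W'|$ over $\{0, 1, \ldots, |W|\}$ and invoking Hochster's formula \ref{hochster} will then yield the full range of non-vanishing Betti numbers claimed in the proposition. Set $V_0 := \bigcup_{\ell=1}^r W_{i_\ell}$, and for $\ell = 1, \ldots, r$ let $B_\ell := [i_1 + \cdots + i_\ell + 2\ell] \setminus [i_1 + \cdots + i_{\ell-1} + 2(\ell-1)]$ be the block of $i_\ell + 2$ elements of $[n]$ carrying the $\ell$-th factor. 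Define
$$
 W := \bigl\{\, C \cup A \,:\, C \subseteq B_2 \cup \cdots \cup B_{r-1},\ \emptyset \neq A \subsetneq B_r \,\bigr\}.
$$
Every vertex of $W$ is disjoint from $B_1$, while every vertex of $V_0 \setminus W_{i_1}$ contains $B_1$ and every vertex of $W_{i_1}$ is contained in $B_1$, so $V_0 \cap W = \emptyset$, and a direct count gives $|V_0 \cup W| = \sum_{\ell=1}^r (2^{i_\ell+2} - 2) + (2^{i_r+2} - 2) \cdot 2^{i_2 + \cdots + i_{r-1} + 2r - 4}$, matching the upper endpoint of the claimed interval after subtracting $j$.

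To witness non-vanishing $(j-1)$-st homology for a given $W' \subseteq W$, I use the fundamental $(j-1)$-cycle $z$ of the join sphere $\sd(\Delta_{d-1})_{V_0}$ and argue that it remains a non-boundary in $\sd(\Delta_{d-1})_{V_0 \cup W'}$. Pick a specific top-dimensional simplex $F^* = F_1^* \cup \cdots \cup F_r^*$ of the join sphere by letting $F_\ell^*$ consist of the vertices $B_1 \cup \cdots \cup B_{\ell-1} \cup A_k^\ell$ for $k = 1, \ldots, i_\ell + 1$, where $A_1^\ell \subsetneq \cdots \subsetneq A_{i_\ell+1}^\ell \subsetneq B_\ell$ is a fixed flag with $|A_k^\ell| = k$. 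Viewed as a chain of faces of $\Delta_{d-1}$, the cardinalities of the vertices of $F^*$ are precisely $\{1, 2, \ldots, |v^*|\} \setminus \{g_1, \ldots, g_{r-1}\}$, where $g_\ell := i_1 + \cdots + i_\ell + 2\ell$ and $v^* := B_1 \cup \cdots \cup B_{r-1} \cup A_{i_r+1}^r$ is the maximum element of $F^*$, of size $|v^*| = g_r - 1$. If $z = \partial c$ in $\sd(\Delta_{d-1})_{V_0 \cup W'}$, then since $F^*$ has coefficient $\pm 1$ in $z$, some $j$-simplex $\sigma = F^* \cup \{w\}$ in the support of $c$ must extend $F^*$, and comparability of $w$ with every element of the chain forces $w$ either to fill one of the gaps (so $|w| = g_\ell$ for some $1 \leq \ell \leq r-1$) or to lie strictly above $v^*$ (so $|w| > |v^*|$).

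The combinatorial heart of the argument, and the main obstacle, is to verify that none of these candidate vertices $w$ lies in $V_0 \cup W$. For the gap at $g_\ell$, the only two vertices of $[n]$ fitting strictly between the consecutive elements of $F^*$ of sizes $g_\ell - 1$ and $g_\ell + 1$ are $w_1 = B_1 \cup \cdots \cup B_\ell$ and $w_2 = B_1 \cup \cdots \cup B_{\ell-1} \cup A_{i_\ell+1}^\ell \cup A_1^{\ell+1}$; a direct inspection of the defining form of each $W_{i_m}$ shows that neither vertex has the required block-by-block shape, and both contain $B_1$ and are therefore excluded from $W$. For $|w| > |v^*|$, a size comparison suffices: the maximum size of a vertex in $V_0$ is $|v^*|$, and the maximum size in $W$ is $i_2 + \cdots + i_r + 2r - 3 < |v^*|$. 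The resulting contradiction yields $\widetilde{H}_{j-1}(\sd(\Delta_{d-1})_{V_0 \cup W'}; \KK) \neq 0$ for every $W' \subseteq W$, and Hochster's formula \ref{hochster} then gives $\beta_{i,i+j}(\KK[\sd(\Delta_{d-1})]) \neq 0$ for every $i$ of the form $|V_0| + |W'| - j$, covering exactly the interval stated in the proposition.
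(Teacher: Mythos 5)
Your proof is correct and takes a genuinely different route from the paper. You use the same vertex sets -- your $V_0 = \bigcup_\ell W_{i_\ell}$ and your auxiliary set $W$ coincide with the paper's $\bigcup_\ell W_{i_\ell}$ and $C(i_1,\ldots,i_r)$ -- but the way you establish non-vanishing homology after adjoining an arbitrary subset $W'\subseteq W$ is different. The paper splits $\sd(\Delta_{d-1})_{\bigcup W_{i_\ell}\cup D}$ into two overlapping pieces meeting in $\sd(\Delta_{d-1})_{W_{i_r}}$, runs a Mayer--Vietoris sequence, and then invokes a closure-operator argument on product posets (via Bj\"orner's homotopy result) to show that adjoining $D$ does not change the homology of the $r$\textsuperscript{th} factor. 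You instead work directly at the chain level: you pin down a specific $(j-1)$-simplex $F^*$ of the join sphere, built from flags $A_1^\ell\subsetneq\cdots\subsetneq A_{i_\ell+1}^\ell\subsetneq B_\ell$, whose vertex cardinalities in $\Delta_{d-1}$ are exactly $\{1,\ldots,|v^*|\}\setminus\{g_1,\ldots,g_{r-1}\}$; you then observe that a $j$-simplex of $\sd(\Delta_{d-1})_{V_0\cup W'}$ containing $F^*$ as a facet would require a vertex $w$ that either has cardinality $g_\ell$ for some $\ell<r$ (and there are exactly two candidates, $B_1\cup\cdots\cup B_\ell$ and $B_1\cup\cdots\cup B_{\ell-1}\cup A_{i_\ell+1}^\ell\cup A_1^{\ell+1}$, neither of which lies in $V_0\cup W$) or has cardinality exceeding $|v^*|$ (impossible by the size bound $i_2+\cdots+i_r+2r-3<|v^*|$). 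Since the fundamental cycle $z$ carries $F^*$ with coefficient $\pm1$, no chain can bound $z$, and Hochster's formula does the rest. This is the multi-factor generalization of the paper's own argument for \ref{le:Betti:bar1}, applied uniformly across all $W'\subseteq W$; it is more elementary (no poset topology, no Mayer--Vietoris) at the modest cost of a slightly more delicate combinatorial verification of the gap-vertex exclusion. Both approaches are correct; yours is arguably closer in spirit to \ref{le:Betti:bar1} and more self-contained, while the paper's is structurally cleaner in that it isolates what happens to a single factor of the join.
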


\begin{proof}
We recall the definition of the sets $W_{i_\ell}$ from \ref{e3}.
  We set $W_{i_1}:=\{ A~:~\emptyset \neq A\subsetneq [i_1+2]\}$ and for $2\leq \ell\leq r$  we let
  \begin{equation*}
     W_{i_{\ell}}:=\{A\cup[i_1+\cdots +i_{\ell-1}+2(\ell-1)]~:~\emptyset \neq A\subsetneq
            [i_1+\cdots +i_\ell+2\ell]\setminus [i_1+\cdots +i_{\ell-1}+2(\ell-1)]\}.
  \end{equation*}
  We further define
  \begin{equation*}
    C(i_1,\ldots,i_r)=\Big\{A\cup B~:~ \substack{\emptyset\neq A\subsetneq
      [i_1+\cdots +i_r+2r]\setminus [i_1+\cdots +i_{r-1}+2(r-1)]\\
            B\subseteq [i_1+\cdots +i_{r-1}+2(r-1)]\setminus [i_1+2]}\Big\}.
  \end{equation*}
  First observe, that for $1\leq j\leq r-1$, $A \in W_{i_j}$ and $B\in C(i_1,\ldots,i_r)$ the set $\{A,B\}$ is
  a non-face of $\sd(\Delta_{d-1})$. Indeed, since
  $A\cap ([i_1+\cdots +i_r+2r]\setminus [i_1+\cdots +i_{r-1}+2(r-1)])=\emptyset$ but
  $B\cap ([i_1+\cdots +i_r+2r]\setminus [i_1+\cdots +i_{r-1}+2(r-1)])\neq \emptyset$,
  we could only have $A\subseteq B$.
  In this case, we arrive at a contradiction, since we know that $A\cap[i_1+2]\neq \emptyset$,
  but $B\cap [i_1+2]= \emptyset$.
  Hence, it follows that for any $D\subseteq C(i_1,\ldots,i_r)$ it holds that
  \begin{equation*}
    \sd(\Delta_{d-1})_{\bigcup_{\ell=1}^r W_{i_{\ell}}\cup D}=
    \sd(\Delta_{d-1})_{\bigcup_{\ell=1}^rW_{i_{\ell}}}\cup\sd(\Delta_{d-1})_{W_{i_r}\cup D}.
  \end{equation*}
  Moreover,
  \begin{equation*}
    \sd(\Delta_{d-1})_{\bigcup_{\ell=1}^r W_{i_{\ell}}}\cap\sd(\Delta_{d-1})_{W_{i_r}\cup D}=
    \sd(\Delta_{d-1})_{W_{i_r}}.
  \end{equation*}
  We claim that
  \begin{equation}\label{claim}
    \widetilde{H}_{\ell}(\sd(\Delta_{d-1})_{W_{i_r}\cup D};\KK)=
    \widetilde{H}_{\ell}(\sd(\Delta_{d-1})_{W_{i_r}};\KK)
  \end{equation}
for any $\ell\in\mathbb{N}$. 
  If we have shown \ref{claim},
  we can deduce from \ref{hom1} and the Mayer-Vietoris sequence in reduced homology for the above decomposition that
  \begin{equation*}
     \widetilde{H}_{\ell}(\sd(\Delta_{d-1})_{\bigcup_{\ell=1}^r W_{i_{\ell}}\cup D};\KK)=
       \begin{cases}
          \KK, & \mbox{ if }\ell=i_1+\cdots +i_r+(r-1)\\
          0,& \mbox{ otherwise}
       \end{cases}
  \end{equation*}
  for all $D\subseteq C(i_1,\ldots,i_r)$.
  This then in particular shows the assertion of the proposition since
  $\# C(i_1,\ldots,i_r)=(2^{i_r+2}-2)2^{i_2+\cdots +i_{r-1}+2r-4}$ and $\# \Big(\displaystyle{\bigcup_{\ell=1}^r} W_{i_\ell}\Big)=\displaystyle{\sum_{\ell=1}^r (2^{i_\ell+2}-2)}$.\\

  We now show claim \ref{claim}. The key idea is to interpret $\sd(\Delta_{d-1})_{W_{i_{r}}\cup D}$
  as the order complex of a poset.
  We distinguish two cases.

  \noindent {\sf Case 1:} $D=C(i_1,\ldots,i_r)$.

First note, that $W_{i_{r}}\cup C(i_1,\ldots,i_r)$  can be written in the following form
  \begin{equation*}
    W_{i_{r}}\cup C(i_1,\ldots,i_r)=\left\{A\cup B~:~\substack{\emptyset \neq A\subsetneq [i_1+\cdots+i_r+2r]
    \setminus [i_1+\cdots+i_{r-1}+2(r-1)]\\
    B\subseteq [i_1+\cdots +i_{r-1}+2(r-1)]\setminus [i_1+2] \mbox{ or } B=[i_1+\cdots+i_{r-1}+2(r-1)]}\right\}.
  \end{equation*}
To simplify notation, we set $P_1 = 2^{[i_1+\cdots +i_r+2r]\setminus [i_1+\cdots +i_{r-1}+2(r-1)]}-\{\emptyset, [i_1+\cdots +i_r+2r]\setminus [i_1+\cdots +i_{r-1}+2(r-1)]\}$ and
  $P_2 = 2^{[i_1+\cdots +i_{r-1}+2(r-1)]\setminus [i_1+2]}\cup \{[i_1+\cdots + i_{r-1}+2(r-1)]\}$. 
   Since $([i_1+\cdots+i_r+2r]\setminus
  [i_1+\cdots+i_{r-1}+2(r-1)])\cap [i_1+\cdots+i_{r-1}+2(r-1)]=\emptyset$
  any set $E\in W_{i_{r}}\cup C(i_1,\ldots,i_r)$ has a unique decomposition $E=E_1\cup E_2$, where
  $E_1\in P_1$ and
  $E_2 \in P_2$.
  In the following we consider $W_{i_{r}}\cup C(i_1,\ldots,i_r)$, $P_1$ and
  $P_2$ as partially ordered sets with order relation given by set inclusion.
  By the above arguments the map
  \begin{align*}
    \Phi: \left\{ \begin{array}{ccc} W_{i_r}\cup C(i_1,\ldots,i_r) &\rightarrow & P_1 \times P_2 \\
                                                           A\cup B &\mapsto & (A,B)
                   \end{array} \right.
  \end{align*}
  is well-defined. Here, for the two posets $P_1$ and $P_2$, we write $P_1 \times P_2$ for the
  partially ordered set on the Cartesian product with $(p_1,p_2) \leq (p_1',p_2')$ if and only
  if $p_s \leq p_s'$ in $P_s$ for $s \in \{1,2\}$.
  It is now straight forward to verify that $\Phi$ defines an isomorphism of partially ordered sets.
  Moreover, the poset $P_1 \times P_2$ is easily seen to be isomorphic to $P_1' \times P_2'$,
  where $P_1' = 2^{[i_r+2]}-\{\emptyset,[i_r+2]\}$, $P_2' = 2^{[i_2+\cdots+i_{r-1}+2r-4]}\cup \{\hat{1}\}$
  with order relation being inclusion and $\hat{1}$ being an artificial maximal element of $P_2'$.
  It then follows that the order complexes of $W_{i_r}\cup C(i_1,\ldots,i_r)$ and $P_1' \times P_2'$
  are isomorphic. Note that the order complex of $W_{i_r}\cup C(i_1,\ldots,i_r)$ is
  $\sd(\Delta_{d-1})_{W_{i_{r}}\cup C(i_1,\ldots,i_r)}$.
  We now look at the following map
  \begin{align*}
    f: \left\{ \begin{array}{ccc} P_1' \times P_2' & \rightarrow & P_1' \times P_2' \\
                                   (A,B) & \mapsto & (A,\hat{1}) \end{array} \right.
  \end{align*}
  Then $f$ is a poset map and satisfies $f^2((A,B)) = f((A,B)) \geq (A,B)$. Hence $f$ is a
  closure operator. Thus by \cite[Cor. 10.12]{Bjoerner}
  the order complexes of $P_1' \times P_2'$ and $f(P_1' \times P_2')$
  are homotopy equivalent. Since the projection on the first coordinate is an isomorphism of
  $f(P_1' \times P_2')$ and $P_1'$ it follows that the order complexes of $P_1' \times P_2'$ and
  $P_1'$ are homotopy equivalent.

  In particular, their homology groups are equal. Finally,
  since the order complex of $P_1'$ is the barycentric subdivision of the boundary complex
  of an $(i_r+1)$-simplex, we obtain

  \begin{equation*}
     \widetilde{H}_{\ell}(\sd(\Delta_{d-1})_{W_{i_r}\cup C(i_1,\ldots,i_r)};\KK) =
        \begin{cases}
           \KK, & \mbox{ if } \ell=i_r\\
             0, & \mbox{otherwise.}
        \end{cases}
   \end{equation*}
  Since by the discussion preceding \ref{hom1}, we have $\sd(\Delta_{d-1})_{W_{i_r}}\cong \mathbb{S}^{i_r}$, this concludes Case 1.

  \noindent {\sf Case 2:} $\emptyset \neq D\subsetneq C(i_1,\ldots,i_r)$.

  Using the map $\Phi$, defined in Case 1, and by an analogous argumentation as in this case, one sees
  that $W_{i_r}\cup D$ is isomorphic to a subposet $P_D$ of
  $P_1 \times P_2$ that contains $P_1 \times\{[i_1+\cdots+i_{r-1}+2(r-1)]\}$.
  For this note that $B\cup [i_1+\cdots +i_{r-1}+2(r-1)]\in W_{i_{r}}$ for all $\emptyset \neq B\neq
  [i_1+\cdots+i_r+2r] \setminus [i_1+\cdots +i_{r-1}+2(r-1)]$. The identification of $P_1 \times P_2$
  and $P_1' \times P_2'$ provides a copy $P_D'$ of $P_D$ inside $P_1' \times P_2'$ such that
  $P_1' \times \{ \hat{1}\} \subseteq P_D'$.
  Now the restriction $f|_{P_D'}$ of $f$ to $P_D'$ is a closure operator on $P_D'$.
  By $P_1' \times \{ \hat{1}\} \subseteq P_D'$ the projection on the first coordinate
  gives an isomorphism of the image of $f_{P_D}'$ and $P_1'$.
  As in Case 1 we conclude
  \begin{equation*}
    \widetilde{H}_{\ell}(\sd(\Delta)_{W_{i_r}\cup D};\KK)=
       \begin{cases}
          \KK, & \mbox{ if } \ell=i_r\\
            0, & \mbox{otherwise.}
       \end{cases}
  \end{equation*}
  This finishes the proof.
\end{proof}

Applying \ref{thm:continuous} to the sequence $(i_1,\ldots,i_j)=(0,\ldots,0)$ for $1\leq j\leq \frac{d}{2}$ we can deduce the following:

\begin{corollary}\label{le:Betti:bar2}
  Let $1 \leq j \leq \frac{d}{2}$. 
  Then we have
  $\beta_{i,i+j}(\KK[\sd(\Delta_{d-1})]) \neq 0$ for
  $$j \leq i \leq 2^{2j-3}+j.$$
\end{corollary}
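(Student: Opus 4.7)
The plan is to apply Proposition~\ref{thm:continuous} directly with the specific choice $r = j$ and the all-zero tuple $(i_1,\ldots,i_j) = (0,\ldots,0) \in \NN^j$, and then simplify the resulting endpoint formulas.

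First I would verify the two hypotheses of \ref{thm:continuous} for this choice. Condition (i), namely $i_1 + \cdots + i_r + (r-1) = j-1$, reduces to the tautology $j-1 = j-1$. Condition (ii), $i_1 + \cdots + i_r + 2r \leq d$, becomes $2j \leq d$, which is exactly the hypothesis $j \leq d/2$ of the corollary. The ambient requirement $1 \leq j \leq d-2$ of the proposition is also met for $j \geq 2$: from $d \geq 2j$ and $j \geq 2$ one deduces $d-2 \geq 2j-2 \geq j$.

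Next I would simplify the two endpoints. With every $i_\ell = 0$ each summand $2^{i_\ell + 2} - 2$ equals $2$, so the lower endpoint becomes
\[
\sum_{\ell=1}^{j}(2^{i_\ell + 2} - 2) - j \;=\; 2j - j \;=\; j,
\]
and the upper endpoint becomes
\[
\sum_{\ell=1}^{j}(2^{i_\ell + 2} - 2) + (2^{i_r + 2} - 2)\cdot 2^{i_2 + \cdots + i_{r-1} + 2r - 4} - j \;=\; 2j + 2\cdot 2^{2j-4} - j \;=\; j + 2^{2j-3},
\]
matching the bound in the corollary exactly.

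The only subtlety is the boundary case $j = 1$, where $r = j = 1$ violates the standing assumption $r \geq 2$ of \ref{thm:continuous}, so the above substitution is not licensed. I would dispatch this case by hand: the claimed range shrinks to the single value $i = 1$, and $\beta_{1,2}(\KK[\sd(\Delta_{d-1})]) \neq 0$ for any $d \geq 2$ because $\sd(\Delta_{d-1})$ is a flag complex with at least two incomparable vertices (e.g.\ the singletons $\{1\}$ and $\{2\}$), so it has a minimal non-face of size two, and Hochster's formula \ref{hochster} yields a nonzero contribution on the $(1,2)$-position. I expect no real obstacle here: the corollary is essentially a routine specialization of \ref{thm:continuous}, and this trivial edge case is the only non-automatic step.
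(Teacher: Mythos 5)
Your proof is correct and takes essentially the same route as the paper, which likewise obtains the corollary by applying \ref{thm:continuous} to the all-zero tuple $(i_1,\ldots,i_j)=(0,\ldots,0)$ with $r=j$ and simplifying the endpoints. You are in fact slightly more careful than the paper: for $j=1$ the choice $r=1$ falls outside the hypothesis $r\geq 2$ of \ref{thm:continuous}, a gap the paper leaves implicit, and your direct verification of $\beta_{1,2}(\KK[\sd(\Delta_{d-1})])\neq 0$ via a size-$2$ minimal non-face and Hochster's formula correctly closes it.
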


The following technical lemmas show that the preceding constructions are sufficient to
to deduce $\beta_{i,i+j}(\KK[\sd(\Delta_{d-1})])\neq 0$ for every $i$ in the intervals we  have identified. 

\begin{lemma}\label{lem:i1=0}
  Let $d\geq 3$, $\frac{d}{2}< j\leq d-1$ be integers and let
  $0 \leq i_1\leq i_2\leq \cdots\leq i_r$  be a sequence of integers such that
  \begin{itemize}
    \item[(i)] $i_1=0$,
    \item[(ii)] $i_1+\cdots +i_r+(r-1)=j-1$,
    \item[(iii)] $i_1+\cdots +i_r+2r\leq d$.
  \end{itemize}
  Then
  \begin{equation*}
    (2^{i_r+2}-2)2^{i_2+\cdots +i_{r-1}+2r-4}+\sum_{\ell=1}^r(2^{i_\ell+2}-2)\geq 2^{j+1}-2.
  \end{equation*}
\end{lemma}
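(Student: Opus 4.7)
The plan is to induct on $r$, reducing from $r$ to $r-1$ via the merge operation
\[
 (0,i_2,\ldots,i_{r-1},i_r)\ \mapsto\ (0,i_2,\ldots,i_{r-2},i_{r-1}+i_r+1)
\]
while showing the left-hand side does not increase under this move. The ``$+1$'' is forced by condition (ii): it compensates exactly for replacing $r-1$ by $r-2$ when two entries are merged into one, so (i), (ii) and the monotonicity of the sequence are preserved, and (iii) becomes $j+r-1\leq d$, which follows from $j+r\leq d$.

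For the base case $r=2$, conditions (i) and (ii) force $i_2=j-2$ and the sum $i_2+\cdots+i_{r-1}$ is empty, so the left-hand side evaluates directly to
\[
  (2^{j}-2)\cdot 2^{0} + (2^{2}-2) + (2^{j}-2) \;=\; 2^{j+1}-2,
\]
giving equality. (When $j=d-1$ no sequence of length $\geq 2$ with $i_1=0$ satisfies (iii), so the statement is vacuous in that case.)

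For the inductive step $r\geq 3$, set $A=i_{r-1}$, $B=i_r$, $S'=i_2+\cdots+i_{r-2}$ and $X=2^{S'+2r-6}$. Expanding $T_1=(2^{i_r+2}-2)\cdot 2^{i_2+\cdots+i_{r-1}+2r-4}$ and $T_2=\sum_{\ell=1}^{r}(2^{i_\ell+2}-2)$ both before and after the merge and collecting terms, the difference of left-hand sides rearranges into
\begin{equation*}
 \mathrm{LHS}_r - \mathrm{LHS}_{r-1}
   \;=\; (X-1)\bigl(2^{A+B+3}-2^{A+3}+2\bigr) \;+\; \bigl(2^{B+2}-2^{A+2}\bigr).
\end{equation*}
Both summands are non-negative: $X\geq 1$ since $r\geq 3$ forces $S'+2r-6\geq 0$; the bracket factors as $2^{A+3}(2^{B}-1)+2>0$; and $2^{B+2}\geq 2^{A+2}$ follows from the ordering assumption $A\leq B$. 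Hence $\mathrm{LHS}_r \geq \mathrm{LHS}_{r-1} \geq 2^{j+1}-2$ by the inductive hypothesis.

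The main technical obstacle I expect is guessing the correct merge (both the ``$+1$'' and the choice to merge the two largest entries, which is what makes both the validity of the target sequence and the sign of the difference work out) and then executing the algebraic rearrangement of $\mathrm{LHS}_r-\mathrm{LHS}_{r-1}$ into the two manifestly non-negative summands above. Once that identity is in hand, verifying its non-negativity and packaging the induction are essentially mechanical.
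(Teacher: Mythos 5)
Your proof is correct, but it takes a genuinely different route from the paper. The paper argues directly: it bounds the first summand from below by
$(2^{i_r+2}-2)\,2^{i_2+\cdots+i_{r-1}+2r-4}\geq 2^{i_r+1+i_2+\cdots+i_{r-1}+2r-4}=2^{j+r-3}$
(using $i_1=0$ and condition (ii)), which already exceeds $2^{j+1}-2$ once $r\geq 4$, and then handles $r\in\{1,2,3\}$ by explicit computation. You instead set up an induction on $r$: you merge the two largest entries via $(0,i_2,\ldots,i_{r-1},i_r)\mapsto(0,i_2,\ldots,i_{r-2},i_{r-1}+i_r+1)$, verify that the new sequence still satisfies (i)--(iii) and remains weakly increasing, and show through the identity
$\mathrm{LHS}_r-\mathrm{LHS}_{r-1}=(X-1)\bigl(2^{A+B+3}-2^{A+3}+2\bigr)+\bigl(2^{B+2}-2^{A+2}\bigr)$
(with $A=i_{r-1}$, $B=i_r$, $X=2^{i_2+\cdots+i_{r-2}+2r-6}$) that the left-hand side only drops when you reduce $r$, terminating at the base case $r=2$ where equality holds. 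I checked the algebra and it is right; the non-negativity of both summands uses $r\geq 3$ (for $X\geq 1$) and the ordering $A\leq B$, exactly as you say, and your remark that $r=1$ is impossible (since $j\geq 2$) means $r=2$ really is the floor. The two approaches buy different things: the paper's estimate is shorter once you accept three ad hoc base cases, whereas your merge argument has a single base case and makes the extremal structure (the equality case $(0,j-2)$) and the role of the ordering assumption visibly explicit. The cost of your route is the need to find the right merge, which you correctly identified as the nontrivial step.
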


\begin{lemma}\label{lem:i1>1}
   Let $d\geq 3$, $\frac{d}{2}< j\leq d-1$, $r\geq 2$ be integers and let $0 \leq i_1\leq i_2\leq \ldots\leq i_r$ be 
   a sequence of integers such that
   \begin{itemize}
     \item[(i)] $i_1\geq 1$,
     \item[(ii)] $i_1+\cdots +i_r+(r-1)=j-1$,
     \item[(iii)] $i_1+\cdots +i_r+2r\leq d$.
   \end{itemize}
   Set $j_1=i_1-1$, $j_2=i_2+1$ and $j_{\ell}=i_{\ell}$ for $3\leq \ell\leq r$.
   Then
   \begin{equation*}
     (2^{i_r+2}-2)2^{i_2+\cdots +i_{r-1}+2r-4}+\sum_{\ell=1}^r(2^{i_{\ell}+2}-2)\geq
     \sum_{\ell=1}^r(2^{j_{\ell}+2}-2).
   \end{equation*}
\end{lemma}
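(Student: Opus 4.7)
The plan is to reduce the asserted inequality to an easy numerical estimate by computing the difference of both sides explicitly, then to verify it by a short case split on $r$.

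First I would expand the right-hand side. Since $j_1=i_1-1$, $j_2=i_2+1$, and $j_\ell=i_\ell$ for $3\leq\ell\leq r$, the only contributions to $\sum_\ell(2^{j_\ell+2}-2)-\sum_\ell(2^{i_\ell+2}-2)$ come from the first two terms and amount to $2^{i_1+1}-2^{i_1+2}+2^{i_2+3}-2^{i_2+2}=2^{i_2+2}-2^{i_1+1}$. After cancelling the common summand $\sum_\ell(2^{i_\ell+2}-2)$ on both sides, the claim reduces to the equivalent inequality
$$
(2^{i_r+2}-2)\cdot 2^{i_2+\cdots+i_{r-1}+2r-4}\ \geq\ 2^{i_2+2}-2^{i_1+1}.
$$

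For $r=2$ the exponent $i_2+\cdots+i_{r-1}+2r-4$ is the empty sum plus $0$, so the left-hand side equals $2^{i_2+2}-2$, and the inequality is equivalent to $2^{i_1+1}\geq 2$, which holds by hypothesis (i). This is the place where $i_1\geq 1$ is genuinely used. For $r\geq 3$, combining $i_1\geq 1$ with the monotonicity $i_1\leq i_2\leq\cdots\leq i_r$ forces $i_2\geq 1$, whence $2^{i_r+2}-2\geq 2^{i_2+2}-2\geq 2^{i_2+1}$, while $2^{i_2+\cdots+i_{r-1}+2r-4}\geq 2^{2r-4}\geq 4$. Multiplying these estimates, the left-hand side is at least $2^{i_2+3}$, which comfortably exceeds the right-hand side $2^{i_2+2}-2^{i_1+1}\leq 2^{i_2+2}$.

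The proof is essentially bookkeeping; the only substantive observation is that $i_1\geq 1$ together with monotonicity forces $i_2\geq 1$, which keeps the $r\geq 3$ bound robust even when the intermediate terms $i_3,\ldots,i_{r-1}$ all vanish. The $r=2$ case is the tight instance of the inequality and is where the hypothesis $i_1\geq 1$ really enters.
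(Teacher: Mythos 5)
Your argument is correct and essentially the same as the paper's: both cancel the common sum, reduce the claim to the equivalent inequality $(2^{i_r+2}-2)\,2^{i_2+\cdots+i_{r-1}+2r-4}\geq 2^{i_2+2}-2^{i_1+1}$, and then dispose of $r=2$ directly and $r\geq 3$ with a short power-of-two estimate. (One small side remark: both of your bounds already hold for $i_1,i_2\geq 0$ --- for $r=2$ one only needs $2^{i_1+1}\geq 2$, and $2^{i_2+2}-2\geq 2^{i_2+1}$ needs only $i_2\geq 0$ --- so the hypothesis $i_1\geq 1$ is really there to keep $j_1$ nonnegative rather than being the crux of the estimate.)
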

The proofs of  \ref{lem:i1=0} and \ref{lem:i1>1} are given in \ref{Appendix}.

If we combine \ref{thm:continuous}, \ref{lem:i1=0} and \ref{lem:i1>1} we obtain the following.

\begin{lemma}\label{lem:upperHalf}
  Let $d\geq 1$ and let $\frac{d}{2}< j\leq d-2$.
  Then $\beta_{i,i+j}(\KK[\sd(\Delta_{d-1})])\neq 0$ for all
  \begin{equation*}
    m_j\leq i\leq 2^{j+1}-2-j.
  \end{equation*}
\end{lemma}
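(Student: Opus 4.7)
The strategy is to cover the target range $[m_j,2^{j+1}-2-j]$ by a union of the non-vanishing intervals supplied by \ref{thm:continuous}, indexed by a carefully chosen chain of admissible tuples. For a sorted tuple $\mathbf{i}=(i_1\leq\cdots\leq i_r)$ satisfying conditions (i)--(ii) of \ref{thm:continuous}, write
$$C(\mathbf{i})=\sum_{\ell=1}^r(2^{i_\ell+2}-2),\qquad M(\mathbf{i})=(2^{i_r+2}-2)\cdot 2^{i_2+\cdots+i_{r-1}+2r-4},$$
so that \ref{thm:continuous} gives $\beta_{i,i+j}(\KK[\sd(\Delta_{d-1})])\neq 0$ for every $i$ in $I(\mathbf{i}):=[C(\mathbf{i})-j,\, C(\mathbf{i})+M(\mathbf{i})-j]$. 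The desired left endpoint $m_j$ will come from \ref{againmj}, which identifies $m_j+j$ as the minimum possible value of $C(\mathbf{i})$, and the right endpoint $2^{j+1}-2-j$ will come from \ref{lem:i1=0}.

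I would start with a minimizing tuple $\mathbf{i}^{(0)}$, so that $C(\mathbf{i}^{(0)})=m_j+j$ and hence the left endpoint of $I(\mathbf{i}^{(0)})$ equals $m_j$. If $\mathbf{i}^{(0)}$ already has $i_1=0$, then \ref{lem:i1=0} yields $C(\mathbf{i}^{(0)})+M(\mathbf{i}^{(0)})\geq 2^{j+1}-2$ and a single interval covers the whole range. Otherwise I iteratively construct $\mathbf{i}^{(k+1)}$ from $\mathbf{i}^{(k)}$ by the substitution used in \ref{lem:i1>1}: replace the sorted $(i_1,i_2,i_3,\ldots,i_r)$ by the sorted rearrangement of $(i_1-1,i_2+1,i_3,\ldots,i_r)$. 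This preserves $r$ and the sum $\sum_\ell i_\ell$, so conditions (i)--(ii) of \ref{thm:continuous} still hold; moreover \ref{lem:i1>1} yields $C(\mathbf{i}^{(k)})+M(\mathbf{i}^{(k)})\geq C(\mathbf{i}^{(k+1)})$, i.e., the upper endpoint of $I(\mathbf{i}^{(k)})$ is at least the lower endpoint of $I(\mathbf{i}^{(k+1)})$, so consecutive intervals meet with no integer gap. The new smallest entry is $i_1^{(k)}-1$ (it is strictly less than all other entries of the new tuple), so the smallest entry drops by exactly one at each step. After $i_1^{(0)}$ iterations we reach a tuple $\mathbf{i}^{(N)}$ with smallest entry $0$, and \ref{lem:i1=0} then ensures that the right endpoint of $I(\mathbf{i}^{(N)})$ is at least $2^{j+1}-2-j$.

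Consequently $\bigcup_{k=0}^N I(\mathbf{i}^{(k)})\supseteq[m_j,\,2^{j+1}-2-j]$, which is exactly the conclusion of the lemma. The main obstacle—showing that consecutive intervals in the chain meet, and that the last one reaches the required right endpoint—has already been dispatched by \ref{lem:i1>1} and \ref{lem:i1=0} in the appendix; what remains is the bookkeeping above. The only edge case is when the minimizer is forced to have $r=1$, which happens precisely when $m_j=2^{j+1}-2-j$; in that situation the target range collapses to a single integer, covered directly by \ref{le:Betti:bar1}.
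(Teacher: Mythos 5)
Your proposal is correct and follows the same strategy as the paper's proof: start from a tuple realizing the minimum $m_j+j$ from \ref{againmj}, then repeatedly apply the perturbation of \ref{lem:i1>1} to shift the interval produced by \ref{thm:continuous} rightward with no gaps until the smallest entry is $0$, at which point \ref{lem:i1=0} guarantees the union reaches $2^{j+1}-2-j$. The only cosmetic difference is the edge case $r=1$: the paper invokes \ref{le:barysimplex}(ii), while you invoke \ref{le:Betti:bar1}, which is actually a slightly more direct citation since it gives $\beta_{2^{j+1}-2-j,\,2^{j+1}-2}\neq 0$ immediately.
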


\begin{proof}
  Let $0 \leq i_1\leq i_2\leq \ldots\leq i_r$ be a sequence of integers such that
  \begin{itemize}
    \item[(i)] $i_1+\cdots +i_r+(r-1)=j-1$,
    \item[(ii)] $i_1+\cdots +i_r+2r\leq d$, and,
    \item[(iii)] $m_j=\sum_{\ell=1}^r (2^{i_{\ell}+2}-2)-j$.
  \end{itemize}
  If $r=1$, then we have $m_j=2^{j+1}-2-j$ and it suffices to show that
  $\beta_{m_j,m_j+j}(\KK[\sd(\Delta_{d-1})])\neq 0$. But this is true by \ref{le:barysimplex} (ii).

  Assume $r\geq 2$.
  It follows from \ref{thm:continuous} that $\beta_{i,i+j}(\KK[\sd(\Delta_{d-1})])\neq 0$ for all
  \begin{equation}\label{eq:step1}
    m_j\leq i \leq \sum_{\ell=1}^r(2^{i_{\ell}+2}-2)+(2^{i_r+2}-2)2^{i_2+\cdots +i_{r-1}+2r-4}-j.
  \end{equation}
  If $i_1=0$, then \ref{lem:i1=0} directly yields the claim.
  If $i_1\geq 1$, then we infer from \ref{eq:step1} and \ref{lem:i1>1}
  $\beta_{i,i+j}(\KK[\sd(\Delta_{d-1})])\neq 0$ for all
  \begin{equation*}
    m_j\leq i \leq \sum_{\ell=1}^r(2^{j_{\ell}+2}-2)-j,
  \end{equation*}
  where $j_1=i_1-1$, $j_2=i_2+1$ and $j_{\ell}=i_{\ell}$ for $3\leq \ell\leq r$. If we apply \ref{thm:continuous} to the ordered  sequence given by $\{j_1,\ldots,j_r\}$, we can conclude
$\beta_{i,i+j}(\KK[\sd(\Delta_{d-1})])\neq 0$ for all
  \begin{equation*}
    m_j\leq i \leq \sum_{\ell=1}^r(2^{j_{\ell}+2}-2)+(2^{j_r+2}-2)2^{j_2+\cdots +j_{r-1}+2r-4}-j.
  \end{equation*}
If $j_1=0$, the claim follows again from \ref{lem:i1=0}. If $j_1>0$, then we go on perturbing the sequence, i.e., we subtract $1$ from
  its minimum element and add $1$ to its second smallest element until the smallest element equals $0$. Using \ref{thm:continuous} and \ref{lem:i1>1} in each step of this process and finally \ref{lem:i1=0} the claim follows.
\end{proof}

We finally provide the proof of \ref{thm:Betti:bar}.

\begin{proof}[Proof of \ref{thm:Betti:bar}]
 By the same arguments as in the proof of \ref{le:barysimplex} assertions (i) and (ii) are equivalent. We provide the proof of (i).

   Using
   $$\beta_{i,i+j}(\KK[\sd(\Delta_{d-1})]) = \beta_{2^d-d-1-i,2^d-i-j-2}(\KK[\sd(\Delta_{d-1})]$$
   (see \ref{Gorenstein}) it follows from \ref{lem:upperHalf} that $\beta_{i,i+j}(\KK[\sd(\Delta_{d-1})])\neq 0$ for
   \begin{equation*}
      2^d-d-1-(2^{d-1-j+1}-2-(d-1-j))\leq i\leq 2^d-d-1-m_{d-1-j},
   \end{equation*}
   i.e., for
   \begin{equation}\label{eq:BettiFirstPart}
     2^d-2^{d-j}-j\leq i\leq 2^d-d-1-m_{d-1-j}.
   \end{equation}
   In addition, we know from \ref{le:Betti:bar2} and \ref{le:Betti:bar1}
   $\beta_{i,i+j}(\KK[\sd(\Delta_{d-1})])\neq 0$ for
   $j\leq i\leq 2^{2j-3}+j$ and for $2^{j+1}-2 -j \leq i \leq 2^d - 2^{d-j}-1-j$.
   Since $2^d - 2^{d-j}-1-j\geq 2^{2j-3}+j\geq 2^{j+1}-2 -j\geq j$ for $d\geq 3$ and $j\geq 1$,
   we obtain $\beta_{i,i+j}(\KK[\sd(\Delta_{d-1})])\neq 0$ for $j\leq i\leq  2^d - 2^{d-j}-1-j$.
   Combining this with \ref{eq:BettiFirstPart} shows the first part of the claim. The second part, concerning the vanishing of certain Betti numbers, follows from \ref{le:barysimplex} (i) and (ii) combined with \eqref{Gorenstein}.

   (iii) follows from \ref{le:barysimplex} (v).
\end{proof}

\subsection{Asymptotic behavior of Betti numbers for barycentric subdivisions}
  In this section, we do not restrict our attention to barycentric subdivisions of simplices anymore
  but consider iterated barycentric subdivisions of arbitrary simplicial complexes. More precisely,
  let $\Delta$ be a $(d-1)$-simplicial complex and, for $r\in \mathbb{N}$, let $\sd^r(\Delta)$ be
  its $r$\textsuperscript{th} iterated barycentric subdivision,
  defined by $\sd^0(\Delta):=\Delta$, $\sd^1(\Delta):=\sd(\Delta)$ and $\sd^r(\Delta):=\sd(\sd^{r-1}(\Delta))$.
  Given a non-negative integer $0\leq j\leq \reg(\KK[\sd(\Delta)])=\reg(\KK[\sd^r(\Delta)])$
  we are interested in the relative proportion of non-zero Betti numbers
  $\beta_{i,i+j}(\KK[\sd^r(\Delta)])$ compared to the projective dimension
  if $r$ tends to infinity. That is, given $j$, we want to study the quantity
  \begin{equation*}
    \frac{\# \{i~:~\beta_{i,i+j}(\KK[\sd^r(\Delta)])\neq 0\}}{\pdim( \KK[\sd^r(\Delta)])}
  \end{equation*}
  if $r$ goes to infinity.
  Our main result in this section is the following.

\begin{theorem}\label{thm:asymptoticsBary}
  Let $d-1\geq 1$ and let $\Delta$ be a $(d-1)$-dimensional simplicial complex. Let $N(d)$ be the number of vertices of
  $\sd^3(\Delta_{d-1})\setminus\partial(\sd^3(\Delta_{d-1}))$. Then, for $r\geq 3$  
  we have $\beta_{i,i+j}(\KK[\sd^r(\Delta)])\neq 0$
  in the following cases:
  \begin{itemize}
        \item[(i)] $1\leq j\leq \frac{d}{2}$ and
        $j\leq i\leq \pdim (\KK[\sd^r(\Delta)])+\depth (\KK[\Delta])-N(d)+2^d-d-1-m_{d-j-1}$,
    \item[(ii)] $\frac{d}{2}<j\leq d-2$ and $m_j\leq i\leq \pdim (\KK[\sd^r(\Delta)])+
                                                    \depth( \KK[\Delta])-N(d)+2^d-2d+j$, 
     \item[(iii)] $j=d-1$ and $2^d-d-1\leq j\leq \pdim (\KK[\sd^r(\Delta)])+\depth (\KK[\Delta])-N(d)+2^d-d-1$.
  \end{itemize}
\end{theorem}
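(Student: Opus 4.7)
The plan is to invoke Hochster's formula~\ref{hochster} and, for each admissible $i$, exhibit a vertex subset $W\subseteq V(\sd^r(\Delta))$ of cardinality $i+j$ with $\widetilde{H}_{j-1}(\sd^r(\Delta)_W;\KK)\neq 0$. The construction transports the optimal subsets produced by~\ref{thm:Betti:bar} for $\sd(\Delta_{d-1})$ into $\sd^r(\Delta)$ via a locally supported deep-interior face, then enlarges them by adjoining ``exterior'' vertices.

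For the localization step, I fix a $(d-1)$-face $F$ of $\sd^{r-3}(\Delta)$; one more step of subdivision embeds $\sd^3(2^F)$, a copy of $\sd^3(\Delta_{d-1})$ with exactly $N(d)$ vertices in the interior of $|F|$, as an induced subcomplex of $\sd^r(\Delta)$. Inside it I select a $(d-1)$-face $H$ of $\sd^2(2^F)$ all of whose vertices lie in $|F|^\circ$, so that $\sd(2^H)\subseteq \sd^3(2^F)\subseteq \sd^r(\Delta)$ is an induced copy of $\sd(\Delta_{d-1})$ whose $2^d-1$ vertices are all interior to $|F|$. The hypothesis $r\geq 3$ is exactly what makes this choice available.

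For each $i$ in the $\sd(\Delta_{d-1})$-range from~\ref{thm:Betti:bar}, Hochster's formula supplies $W_0\subseteq V(\sd(2^H))$ of size $i+j$ with $\widetilde{H}_{j-1}(\sd(2^H)_{W_0};\KK)\neq 0$; since $\sd(2^H)$ is induced in $\sd^r(\Delta)$, this computes $\widetilde{H}_{j-1}(\sd^r(\Delta)_{W_0};\KK)$. To reach larger values of $i$, one enlarges $W_0$ by adjoining subsets $V'$ of the exterior vertex set $V^*:=V(\sd^r(\Delta))\setminus V_{\mathrm{int}}(\sd^3(2^F))$, which has cardinality $v_r-N(d)$ for $v_r=\#V(\sd^r(\Delta))$. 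Varying both the choice of $W_0$ and $|V'|$ covers every cardinality $|W|=|W_0|+|V'|$ in the prescribed range; the identity $v_r=\pdim(\KK[\sd^r(\Delta)])+\depth(\KK[\Delta])$, which follows from Auslander--Buchsbaum combined with the depth invariance recorded in Table~\ref{How}, then converts the count into the upper bound stated in (i), (ii) and (iii).

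The main obstacle is verifying that the non-bounding cycle $z\in C_{j-1}(\sd(2^H)_{W_0};\KK)$ remains non-bounding in $\sd^r(\Delta)_{W_0\cup V'}$. The subtle point is that $V'$ may include vertices on $\partial|F|$ that are joined in $\sd^r(\Delta)$ to interior vertices of $\sd(2^H)$, so a putative bounding $j$-chain need not split cleanly between interior and exterior simplices. The approach I would take is a Mayer--Vietoris argument along the closed star of $\sd(2^H)$ in $\sd^3(2^F)$, whose interior is disjoint (in $\sd^r(\Delta)_{W_0\cup V'}$) from all vertices of $V'$; since that closed star deformation retracts onto $\sd(2^H)_{W_0}$, the class $[z]$ should inject into $\widetilde{H}_{j-1}(\sd^r(\Delta)_{W_0\cup V'};\KK)$. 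Making this identification uniform over the choice of $V'$, and verifying the retraction combinatorially, is the technical heart of the argument, and is precisely what forces the buffer of three iterated subdivisions.
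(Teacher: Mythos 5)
Your construction (a deep-interior face, induced copy of $\sd(\Delta_{d-1})$ once more subdivided, extension by exterior vertices) matches the paper's exactly, and the accounting via $\pdim+\depth$ is the same. The gap is at what you call the ``technical heart.'' You worry that a boundary vertex of $\sd^3(2^F)$ adjoined to $W_0$ could be connected in $\sd^r(\Delta)$ to a vertex of $\sd(2^H)$, so that a bounding chain ``need not split cleanly.'' But that connection never occurs, and this is precisely the purpose of the three-fold buffer: if $H$ lies strictly in the interior of $\sd^2(2^F)$, then every vertex of $\sd(2^H)$ is a nonempty subset $\sigma\subseteq H$ (a face of $\sd^2(2^F)$), and any vertex $\tau$ of $\sd^3(2^F)$ comparable to $\sigma$ must contain, or be contained in, a face with a vertex in $|F|^\circ$; hence $\tau\notin\partial(\sd^3(2^F))$. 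Likewise $\tau$ cannot lie outside $\sd^3(2^F)$. So the vertex set of $\sd(2^H)$ has \emph{no edge at all} to $V^*$.

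Once you observe this, there is nothing left to prove about cycles persisting: for every $A\subseteq V^*$ and $B\subseteq V(\sd(2^H))$, the induced subcomplex $\sd^r(\Delta)_{A\cup B}$ is literally the disjoint union of $\sd^r(\Delta)_A$ and $\sd^r(\Delta)_B=\sd(2^H)_B$, so $\widetilde{H}_{j-1}(\sd^r(\Delta)_{A\cup B};\KK)=\widetilde{H}_{j-1}(\sd^r(\Delta)_A;\KK)\oplus\widetilde{H}_{j-1}(\sd(2^H)_B;\KK)$. No Mayer--Vietoris, no retraction, no injectivity check over varying $V'$: the direct-sum decomposition carries the nonvanishing class automatically. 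Your proof stops exactly at the step where the paper's observation reduces everything to a one-line additivity of reduced homology over connected components; you should replace the proposed retraction argument with the adjacency claim above, since that claim is what actually closes the argument and is also where the hypothesis $r\geq 3$ is used.
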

\begin{proof}
  Since $\Delta$ is a $(d-1)$-dimensional simplicial complex, there exists a $(d-1)$-dimensional face
  $H\in \Delta$. Let $F\in \sd^{r-3}(2^H)$ be a $(d-1)$-dimensional face of $\sd^{r-3}(2^H)$.
  Choose a $(d-1)$-dimensional face $G$ of $\sd^2(2^F )\subseteq \sd^{r-1}(\Delta)$ that
  lies completely in the interior of $\sd^2(2^F )$. After an additional subdivision it
  is guaranteed that none of the vertices of $\sd(2^G )$ is connected by an edge to any of
  the vertices on the boundary of $\sd^3(2^F )$. Moreover, by construction,
  $\sd(2^G )=\sd^r(\Delta)_{\{A~:~\emptyset\neq A\subseteq G \}}$, i.e.,
  $\sd(2^G)$ is an induced subcomplex of $\sd^r(\Delta)$. For simplicity, we use $V_G$ to
  denote the vertex set of $\sd(2^G)$ and $V_F$ to denote the vertices in
  $\sd^3(2^F)\setminus\partial(\sd^3(2^F))$. Observe that we have
  $V_G\subsetneq V_F$. Moreover, let $V_{\Delta}$ be the vertex set of $\sd^r(\Delta)$ and set
  $V:=V_{\Delta}\setminus V_F$. Since in $\sd^3(\Delta)$ there is no edge connecting
  $\sd(2^G)$ and $\partial(\sd^3(2^F))$, there cannot exist an edge passing
  from $V_G$ to $V$. Hence, for any $A\subseteq V$ and $B\subseteq V_G$, it follows that
  $\sd^r(\Delta)_{A\cup B}=\sd^r(\Delta)_A\cup \sd^r(\Delta)_B$ and, using that $V\cap V_G=\emptyset$,
  we infer that $\sd^r(\Delta)_{A\cup B}$ is disconnected with connected components $\sd^r(\Delta)_A$ and
  $\sd^r(\Delta)_B$. This, in particular, implies that
  \begin{equation*}
    \widetilde{H}_{j}(\sd^r(\Delta)_{A\cup B};\KK)=\widetilde{H}_j(\sd^r(\Delta)_A;\KK)\oplus
                                                   \widetilde{H}_j(\sd^r(\Delta)_B;\KK).
  \end{equation*}
  Using that $\widetilde{H}_{j}(\sd^r(\Delta)_B;\KK)=\widetilde{H}_j(\sd(2^G)_B;\KK)$
  we conclude, that if $B\subseteq V_G$ is such that $\widetilde{H}_{j-1}(\sd(2^G)_B;\KK)\neq 0$,
  then $\widetilde{H}_{j-1}(\sd^r(\Delta)_{A\cup B};\KK)\neq 0$ for any $A\subseteq V$. Now Hochster's
  formula \ref{hochster} implies $\beta_{i,i+j}(\KK[\sd^r(\Delta)])\neq 0$ for $l_G(j)\leq i\leq u_G(j)+\# V$,
  where $l_G(j)$ and $u_G(j)$ denote the beginning and the end of the $j$\textsuperscript{th} strand in the
  resolution of $\KK[\sd(\langle G \rangle)]$. If $1\leq j\leq \frac{d}{2}$, it now follows from
  \ref{thm:Betti:bar} (i) that $\beta_{i,i+j}(\KK[\sd^r(\Delta)])\neq 0$ for
  \begin{align*}
     j\leq i & \leq 2^d-d-1-m_{d-j-1}+ \# V\\
             & =\# V_{\Delta}- \# V_F +2^d-d-1-m_{d-j-1}\\
             & =\pdim (\KK[\sd^r(\Delta)])+\depth( \KK[\sd^r(\Delta)])-\# V_F +2^d-d-1-m_{d-j-1}.
  \end{align*}
  Since the depth is invariant under taking barycentric subdivisions \cite[Cor. 2.5]{KW-multiplicity} and since $\# V_F=N(d)$, this shows
  (i).

  The claims in (ii) and (iii) can be seen by the same arguments using parts (ii) and (iii) of
  \ref{thm:Betti:bar}, respectively.  
  \end{proof}
As an immediate consequence of \ref{thm:asymptoticsBary} and the fact that $\lim_{r\rightarrow \infty} \pdim (\KK[\sd^r(\Delta)])=\infty$  we obtain: 
\begin{corollary}\label{coro:asymptoticsBary}
  Let $d-1\geq 1$ and let $\Delta$ be a $(d-1)$-dimensional simplicial complex. For $1\leq j\leq d-1$ one has that $\#\{ i ~:~ \beta_{i,i+j}(\KK[\sd^r(\Delta)])= 0\}$ is bounded above in terms of $j$ and $d$ (and independent of $r$. In particular,
    \begin{equation*}
      \lim_{r\rightarrow \infty}\frac{\#\{i~:~\beta_{i,i+j}(\KK[\sd^r(\Delta)])\neq 0\}}{\pdim (\KK[\sd^r(\Delta)])}
         =1
    \end{equation*}
    for every $j=1,\dots,d-1$. 
\end{corollary}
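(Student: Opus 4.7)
My plan is to derive the corollary directly from \ref{thm:asymptoticsBary} by bounding, for each fixed $j\in\{1,\dots,d-1\}$, the number of homological indices $i\in[0,\pdim(\KK[\sd^r(\Delta)])]$ at which $\beta_{i,i+j}(\KK[\sd^r(\Delta)])$ vanishes. The three cases of \ref{thm:asymptoticsBary} each exhibit an interval of the form $[L_j(d),\ \pdim(\KK[\sd^r(\Delta)])+C_j(d,\Delta)]$ on which $\beta_{i,i+j}$ is nonzero, where the lower endpoint $L_j(d)\in\{j,m_j,2^d-d-1\}$ depends only on $j$ and $d$, and where the additive constant at the upper end, $C_j(d,\Delta)=\depth(\KK[\Delta])-N(d)+e_j(d)$, has $e_j(d)\in\{2^d-d-1-m_{d-j-1},\,2^d-2d+j,\,2^d-d-1\}$ depending only on $d$ and $j$. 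The point is that $\depth(\KK[\Delta])\in[0,d]$ and $N(d)$ is a function of $d$ alone, so $|C_j(d,\Delta)|$ is bounded by a quantity depending only on $d$ and $j$.

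Next I would observe that the only indices $i\in[0,\pdim(\KK[\sd^r(\Delta)])]$ where the Betti number can still vanish lie in the two tail intervals $[0,L_j(d)-1]$ and $[\pdim(\KK[\sd^r(\Delta)])+C_j(d,\Delta)+1,\ \pdim(\KK[\sd^r(\Delta)])]$. The first tail contributes at most $L_j(d)$ indices, and the second contributes at most $\max\{0,-C_j(d,\Delta)\}$ indices; both bounds are uniform in $r$ and are functions of $j,d$ only (after absorbing $\depth(\KK[\Delta])\le d$). Summing the two contributions yields a uniform bound $M(j,d)$ on $\#\{i:\beta_{i,i+j}(\KK[\sd^r(\Delta)])=0\}$, which is exactly the first assertion of the corollary.

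For the limit, I would combine the previous step with the formula
\begin{equation*}
\pdim(\KK[\sd^r(\Delta)])=\pdim(\KK[\Delta])+\sum_{k=0}^{r-1}\sum_{i\ge 1}f_i^{\sd^k(\Delta)}
\end{equation*}
recorded at the start of \ref{sec:bary}, which forces $\pdim(\KK[\sd^r(\Delta)])\to\infty$ as $r\to\infty$ (each iterated subdivision strictly increases the number of higher-dimensional faces when $d-1\ge 1$). From $\#\{i:\beta_{i,i+j}\ne 0\}\ge \pdim(\KK[\sd^r(\Delta)])+1-M(j,d)$ and the trivial upper bound by $\pdim(\KK[\sd^r(\Delta)])+1$, division by $\pdim(\KK[\sd^r(\Delta)])$ sandwiches the ratio between two sequences converging to $1$.

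Overall there is no genuine obstacle here: the corollary is pure bookkeeping once \ref{thm:asymptoticsBary} is in hand. The only two points worth verifying carefully are (a) that the cases (i)--(iii) of \ref{thm:asymptoticsBary} jointly cover every $j\in\{1,\dots,d-1\}$, which is immediate from their formulation, and (b) that replacing $\depth(\KK[\Delta])$ by the trivial bound $d$ does not spoil the claim that $M(j,d)$ depends only on $j$ and $d$ — this too is immediate.
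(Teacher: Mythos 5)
Your proposal is correct and follows exactly the paper's route: the paper simply states that the corollary is an immediate consequence of \ref{thm:asymptoticsBary} together with $\pdim(\KK[\sd^r(\Delta)])\to\infty$, and your write-up is just a careful unpacking of that one-line argument (bounding the two tail intervals of possible zeros by quantities depending only on $j$ and $d$, then dividing by the projective dimension). No substantive difference.
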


From the properties of barycentric subdivisions listed in Table \ref{How} we know  that for $r\geq 1$ one has: 
\begin{equation*}
  \reg(\KK[\sd^r(\Delta)])=
  \begin{cases}
    d-1, &\mbox{ if } \widetilde{H}_{d-1}(\Delta;\KK)=0\\
      d, &\mbox{ if } \widetilde{H}_{d-1}(\Delta;\KK)\neq 0.
  \end{cases}
\end{equation*}
In case $\widetilde{H}_{d-1}(\Delta;\KK)=0$, \ref{thm:asymptoticsBary} covers all strands of the minimal
free resolution of $\KK[\sd^r(\Delta)]$.
However, in the second case, \ref{thm:asymptoticsBary} does not provide a
statement for the last strand of the resolution. Indeed, we will see that in this case the situation becomes more involved and
the behavior depends on the geometry of the original simplicial complex $\Delta$. Before we can state
the precise result, we need to introduce some notation and recall some work from \cite{Delucchi}.
For a $(d-1)$-dimensional simplicial complex with $f$-vector $(f_{-1}^\Delta,\ldots, f_{d-1}^\Delta)$
the polynomial
\begin{equation*}
f^{\Delta}(t)=\sum_{j=0}^d f_{j-1}^{\Delta}t^{d-j}
\end{equation*}
is called the \emph{$f$-polynomial} of $\Delta$.
In \cite{BrentiWelker}, Brenti and Welker study the behavior of the $f$-polynomial of $\sd^r(\Delta)$ and show that, as $r\rightarrow \infty$, all but one root of $f^{\sd^r(\Delta)}(t)$ converge to negative real
numbers, depending only on the dimension of $\Delta$, and the last root goes to infinity. This statement was
then made more explicit in \cite{Delucchi}, where ``limit polynomials'' for the normalized $f$-polynomials were
provided. We recall the construction of those polynomials.
Let $\Lambda_d:=(\lambda_{i,j})_{-1\leq i,j\leq d-1}$ be the $(d+1)\times (d+1)$ matrix, where,
$\lambda_{-1,-1}=1$ and $\lambda_{i,-1}=0$ if $i\neq -1$, and $\lambda_{i,j}$ counts the number of
$j$-dimensional faces in the interior of the first barycentric subdivision of an $i$-dimensional simplex,
otherwise. It is shown in  \cite[Lem. 3.4]{Delucchi} that $\Lambda_d$ has eigenvalues $0!,1!,2!,3!,\ldots,
d!$. Let $D_d$ be the diagonal matrix of these eigenvalues (in the stated order) and let $P_d$ be the
corresponding matrix of eigenvectors that diagonalizes $\Lambda_d$, i.e., $D_d=P_d^{-1}\Lambda_d P_d$. We
define $M_{d,d}$ as the $(d+1)\times (d+1)$ matrix, whose only non-zero entry is a $1$ in the lower right
corner. Finally, let $\underline{t}:=(t^d,t^{d-1},\ldots,t^1,t^0)^T$. It is proven in
\cite{Delucchi} that for a $(d-1)$-dimensional simplicial complex $\Delta$ the sequence of normalized
$f$-polynomials $\left(\frac{1}{(d!)^r}f^{\sd^r(\Delta)}(t)\right)_{r\geq 1}$ converges coefficientwise to the polynomial
\begin{equation*}
  p_{\infty}^{\Delta}(t):=(f^{\Delta}P_d)M_{d,d}(P_d)^{-1}\underline{t}.
\end{equation*}
By definition of $P_d$ its last column is eigenvector of the matrix $\Lambda_d$ to the eigenvalue $d!$. Since $\Lambda_d$ is a lower triangular matrix with $\lambda_{d-1,d-1}=d!$, we have $\Lambda_d \fe_{d+1}=d!\fe_{d+1}$ and we can hence choose $\fe_{d+1}$ as last column of $P_d$.
Let $p^{-1}_{d-1,2}$ denote the entry of $P_d^{-1}$ in
the last row and second column. Having set up these additional notations, the above discussion directly
yields the following.

\begin{corollary}\label{cor:vertices}
  Let $\Delta$ be a $(d-1)$-dimensional simplicial complex. Then
  \begin{equation*}
    \lim_{r\rightarrow \infty}\frac{1}{(d!)^r}f_0^{\sd^r(\Delta)}=p^{-1}_{d-1,2}f_{d-1}^{\Delta}.
  \end{equation*}
\end{corollary}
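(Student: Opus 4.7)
The plan is to extract the coefficient of $t^{d-1}$ from the coefficientwise convergence of normalized $f$-polynomials established in \cite{Delucchi}. From the definition $f^{\sd^r(\Delta)}(t)=\sum_{j=0}^d f_{j-1}^{\sd^r(\Delta)}t^{d-j}$, the coefficient of $t^{d-1}$ in the normalized polynomial $\frac{1}{(d!)^r}f^{\sd^r(\Delta)}(t)$ is exactly $\frac{1}{(d!)^r}f_0^{\sd^r(\Delta)}$. Hence the coefficientwise convergence of $\frac{1}{(d!)^r}f^{\sd^r(\Delta)}(t)$ to $p_\infty^{\Delta}(t)$ reduces the corollary to identifying the coefficient of $t^{d-1}$ in $p_\infty^{\Delta}(t)=(f^{\Delta}P_d)M_{d,d}(P_d)^{-1}\underline{t}$.

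For this I would just chase the factorization from left to right. Since the last column of $P_d$ is $\fe_{d+1}$, the last entry of the row vector $f^{\Delta}P_d$ equals $f^{\Delta}\cdot\fe_{d+1}=f_{d-1}^{\Delta}$. Right-multiplying by $M_{d,d}$, whose unique non-zero entry sits in the lower right corner, kills all but this last entry, leaving the row vector $(0,\ldots,0,f_{d-1}^{\Delta})$. Right-multiplying by $(P_d)^{-1}$ then yields $f_{d-1}^{\Delta}$ times the last row of $(P_d)^{-1}$.

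Finally I would pair this row vector with $\underline{t}=(t^d,t^{d-1},\ldots,t^0)^T$: the coefficient of $t^{d-1}$ is the product of $f_{d-1}^{\Delta}$ with the entry in the second column of the last row of $(P_d)^{-1}$, which is $p^{-1}_{d-1,2}$ by definition. This gives the claimed identity. All of the genuine content lives in the convergence statement of \cite{Delucchi} and in the identification of $\fe_{d+1}$ as the last column of $P_d$; what remains is bookkeeping in linear algebra, and there is no real obstacle.
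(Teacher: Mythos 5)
Your proposal is correct and matches what the paper intends: the paper treats this corollary as an immediate consequence of the preceding discussion (Delucchi's coefficientwise convergence, the identification of the last column of $P_d$ with $\fe_{d+1}$, and the definitions of $M_{d,d}$, $\underline{t}$, and $p^{-1}_{d-1,2}$), and you have simply carried out the linear-algebra bookkeeping that the paper leaves implicit. The extraction of the $t^{d-1}$-coefficient from $p_\infty^\Delta(t)$ is exactly right.
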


The following simple lemma is needed for the main result of this section and follows immediately from the fact that in a $(d-1)$-dimensional simplicial complex there are no boundaries in
dimension $d-1$.

\begin{lemma}\label{lem:cycles}
Let $\Delta, \Delta'$ be $(d-1)$-dimensional simplicial complexes such that there are geometric realizations for which every $(d-1)$-simplex of $\Delta$ is the union of some $(d-1)$-simplices of $\Delta'$.
Let $\sigma_1,\ldots, \sigma_\ell$ be a basis of the cycle space
  of $\Delta$ in dimension $d-1$ and $\tilde{\sigma}_1,
  \ldots, \tilde{\sigma}_\ell$ their images in the cycle space
  of $\Delta'$. Then every $(d-1)$-cycle of $\Delta'$ is a unique
  linear combination of $\tilde{\sigma}_1,
  \ldots, \tilde{\sigma}_\ell$.
\end{lemma}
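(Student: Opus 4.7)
The plan is to exploit that in a $(d-1)$-dimensional complex there are no $d$-chains, so the top cycle space coincides with $H_{d-1}$ and the hypothesis ``every $(d-1)$-simplex of $\Delta$ is a union of $(d-1)$-simplices of $\Delta'$'' lets us set up an explicit isomorphism of cycle spaces induced by subdivision. Concretely, I would first define a refinement chain map $s\colon C_{d-1}(\Delta)\to C_{d-1}(\Delta')$ by sending an oriented $(d-1)$-simplex $\sigma\in\Delta$ to the signed sum of the $(d-1)$-simplices of $\Delta'$ whose geometric realizations cover $|\sigma|$, each carrying the orientation inherited from $\sigma$. By construction $s(\sigma_i)=\tilde{\sigma}_i$, so the statement becomes: $s$ restricts to a bijection between $Z_{d-1}(\Delta)$ and $Z_{d-1}(\Delta')$.

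Next I would verify that $s$ sends cycles to cycles. For a $(d-2)$-simplex $\eta'$ of $\Delta'$, there are two cases. If $\eta'$ lies in the interior of some $(d-1)$-simplex $\sigma$ of $\Delta$, then because $\dim\Delta=d-1$ the face $\eta'$ is contained in exactly two $(d-1)$-simplices of $\Delta'$, both subdividing $\sigma$ with opposite induced orientations; hence the coefficients appearing in $\partial s(z)$ at $\eta'$ cancel. If instead $\eta'$ is contained in a $(d-2)$-face $\eta$ of $\Delta$, then the contributions to $\partial s(z)$ at $\eta'$ are parametrized by the $(d-1)$-simplices of $\Delta$ containing $\eta$, and their sum equals (up to sign) the coefficient of $\partial z$ at $\eta$, which is zero.

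The core step is surjectivity onto $Z_{d-1}(\Delta')$, which is also where the geometric hypothesis plays its role. Given $z'\in Z_{d-1}(\Delta')$ and a fixed $(d-1)$-simplex $\sigma$ of $\Delta$, let $\tau'_1,\dots,\tau'_k$ be the $(d-1)$-simplices of $\Delta'$ subdividing $\sigma$. The dual graph on $\{\tau'_1,\dots,\tau'_k\}$ with edges given by interior $(d-2)$-faces of the subdivision is connected, and at every such interior face the cycle condition $\partial z'=0$ forces the coefficients on the two adjacent $\tau'_j$ to agree (up to the orientation-induced sign). Thus there is a scalar $c_\sigma\in\KK$ with $z'|_{\sigma}=c_\sigma\cdot s(\sigma)|_{\sigma}$. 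Setting $z:=\sum_\sigma c_\sigma\sigma\in C_{d-1}(\Delta)$ gives $s(z)=z'$; and $z$ is automatically a cycle because in $\Delta$ there are no $d$-chains to bound it, while the cycle condition on $z'$ at boundary $(d-2)$-faces translates back to $\partial z=0$.

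Finally, injectivity of $s$ is immediate from its definition: the coefficients $c_\sigma$ can be read off from $s(z)$ by restricting to any one $\tau'_j$ in the subdivision of $\sigma$. Combining the three steps, $s$ restricts to an isomorphism $Z_{d-1}(\Delta)\xrightarrow{\sim}Z_{d-1}(\Delta')$, so the images $\tilde{\sigma}_1,\dots,\tilde{\sigma}_\ell$ of a basis form a basis. The only delicate point I expect is the orientation bookkeeping that underlies both the ``cancellation on interior walls'' argument and the equality of coefficients within the subdivision of a single $\sigma$; this is standard simplicial subdivision theory, but must be made explicit by fixing, once and for all, a coherent orientation on the refinement of each $(d-1)$-simplex of $\Delta$.
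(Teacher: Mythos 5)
Your proof is correct, but it takes a substantially more explicit route than the paper intends. The paper disposes of this lemma with a one-line remark preceding it: since both $\Delta$ and $\Delta'$ are $(d-1)$-dimensional there are no $(d-1)$-boundaries, so $Z_{d-1}(\Delta)=\widetilde{H}_{d-1}(\Delta)$ and likewise for $\Delta'$; then topological invariance of simplicial homology under subdivision (the subdivision chain map induces an isomorphism $\widetilde{H}_{d-1}(\Delta)\to\widetilde{H}_{d-1}(\Delta')$ and carries $\sigma_i$ to $\tilde\sigma_i$ by definition) gives the statement immediately. You instead build the refinement chain map $s$ from scratch and verify at the chain level that it restricts to a bijection on cycle spaces: the chain-map property, the dual-graph connectedness argument for surjectivity, and the reading-off of coefficients for injectivity. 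This buys you a self-contained combinatorial proof that does not invoke the invariance theorem, at the cost of considerable length and the orientation bookkeeping you flag at the end; the paper's route is shorter and conceptually cleaner. Both are valid under the intended hypothesis that $\Delta'$ genuinely subdivides $\Delta$ so that $|\Delta|=|\Delta'|$ --- your surjectivity step, like the paper's invariance argument, silently requires that every $(d-1)$-simplex of $\Delta'$ lies inside some $(d-1)$-simplex of $\Delta$, which the stated hypothesis does not literally force but which holds in the only application.

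One small slip in wording: you write that ``$z$ is automatically a cycle because in $\Delta$ there are no $d$-chains to bound it.'' The absence of $d$-chains means $z$ cannot be a \emph{boundary}; it says nothing about $z$ being a cycle, which is the condition $\partial z=0$. The correct justification is the one you give in the very next clause, namely that $\partial z'=0$ at the $(d-2)$-faces of $\Delta$ pulls back to $\partial z=0$; the first clause should simply be deleted.
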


The following is a simple consequence of the transformation of $f$-vectors under barycentric subdivision
(see e.g., \cite[Lem. 1]{BrentiWelker}).

\begin{lemma} \label{lem:minimal}
Let $\Delta,\Delta'$ be two $(d-1)$-dimensional simplicial complexes such that for some 
$0 \leq i \leq d-1$ we have $f_i^\Delta > f_i^{\Delta'}$ and $f_{j}^\Delta = f_j^{\Delta'}$ for
$i < j \leq d-1$. Then there exists $R$ such that for $r \geq R$ we have 
  \begin{eqnarray*}
     f_j^{\sd^r(\Delta)}>f_j^{\sd^r(\Delta')} \mbox{~for~} 0 \leq j \leq i \mbox{ and }\\
     f_j^{\sd^r(\Delta)}=f_j^{\sd^r(\Delta')} \mbox{~for~} i < j \leq d-1.
  \end{eqnarray*}
\end{lemma}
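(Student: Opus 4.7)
The plan is to exploit the linear action of barycentric subdivision on $f$-vectors and then carry out a spectral analysis of the resulting operator. By a standard chain count (cf.~\cite[Lem.~1]{BrentiWelker}) one has
\begin{equation*}
f_j^{\sd(\Delta)} \;=\; \sum_{k=j}^{d-1} \lambda_{k,j}\, f_k^{\Delta},
\end{equation*}
where $\lambda_{k,j}$ is the matrix entry defined earlier in this section. Writing $T$ for the corresponding linear operator on $f$-vectors, $T$ is upper triangular with diagonal entries $T_{jj}=\lambda_{j,j}=(j+1)!$, and its super-diagonal entries $\lambda_{k,j}$ with $k>j\ge 0$ are all \emph{strictly positive} (each counts chains of length $j+1$ in the face poset of a $k$-simplex ending at the top face). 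Since the diagonal values $0!,1!,\dots,d!$ are pairwise distinct, $T$ is diagonalizable, and the whole problem reduces to understanding $T^r$ applied to $\mathbf{g}:=\mathbf{f}(\Delta)-\mathbf{f}(\Delta')$.

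For each $k$, let $\mathbf{v}_k$ be the eigenvector of $T$ with eigenvalue $(k+1)!$, normalized by $v_{k,k}=1$. Upper triangularity permits the choice $v_{k,l}=0$ for $l>k$, and the eigenvalue equation then yields the recursion
\begin{equation*}
v_{k,l} \;=\; \frac{1}{(k+1)!-(l+1)!}\sum_{m=l+1}^{k}\lambda_{m,l}\, v_{k,m} \qquad (0\leq l<k).
\end{equation*}
A downward induction on $l$, starting from $v_{k,k}=1$ and using $\lambda_{m,l}>0$ for every $m>l\geq 0$, gives the key positivity $v_{k,l}>0$ for all $0\leq l\leq k$. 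This positivity of every component of each eigenvector is the main technical point of the argument: it is the only place where the internal structure of $\Lambda_d$ is used beyond the distinctness of the diagonal entries.

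With this in hand, expand $\mathbf{g}=\sum_k c_k\mathbf{v}_k$. Because each $\mathbf{v}_k$ is supported on coordinates $\leq k$ and carries $v_{k,k}=1$, a downward induction on $k$ using $g_j=0$ for $j>i$ together with $g_i>0$ forces $c_k=0$ for $k>i$ and $c_i=g_i>0$. Applying $T^r$ then yields
\begin{equation*}
\mathbf{f}(\sd^r(\Delta))-\mathbf{f}(\sd^r(\Delta')) \;=\; T^r\mathbf{g} \;=\; c_i(i+1)!^{\,r}\mathbf{v}_i + \sum_{k<i} c_k(k+1)!^{\,r}\mathbf{v}_k.
\end{equation*}
For $j>i$ every eigenvector entering the sum satisfies $v_{k,j}=0$ (since $k\leq i<j$), so the $j$-th coordinate vanishes for \emph{every} $r\geq 0$, giving the asserted equality. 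For $0\leq j\leq i$ the $j$-th coordinate equals $c_i(i+1)!^{\,r}v_{i,j}+O(i!^{\,r})$ with $c_iv_{i,j}>0$; hence the leading term dominates once $r$ is sufficiently large, and taking $R$ to be the maximum over $j\in\{0,\dots,i\}$ of the resulting thresholds completes the argument.
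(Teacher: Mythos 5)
Your proof is correct and takes exactly the route the paper implicitly suggests: the paper states the lemma with only a pointer to the $f$-vector transformation of Brenti--Welker and has just introduced the matrix $\Lambda_d$ with its eigenvalue structure $0!,1!,\dots,d!$; your $T$ is $\Lambda_d^T$ and your spectral decomposition makes that remark precise. The key technical point you supply — strict positivity of every component $v_{k,l}$ with $0\leq l\leq k$ of the normalized eigenvectors, proved by downward induction via the recursion $v_{k,l}=\bigl((k+1)!-(l+1)!\bigr)^{-1}\sum_{m=l+1}^{k}\lambda_{m,l}v_{k,m}$ together with $\lambda_{m,l}>0$ for $m>l\geq 0$ — is correct and is what guarantees the dominant term $c_i\,(i+1)!^{\,r}v_{i,j}$ is positive for each $j\leq i$. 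The handling of coordinate $-1$ is implicitly fine: since $g_{-1}=0$ and $v_{k,-1}=0$ for $k\geq 0$ (as $\lambda_{m,-1}=0$ for $m\geq 0$), the $-1$ coordinate plays no role, so restricting the positivity claim to $l\geq 0$ is both necessary and sufficient. Your expansion then correctly yields $c_k=0$ for $k>i$ and $c_i=g_i>0$, giving equality of the $j$-th entries for $j>i$ at every $r$ and eventual strict inequality for $j\leq i$. This is a complete proof of the statement the paper leaves as ``a simple consequence.''
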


The preceding lemma motivates the following minimality concept for $\ell$-cycles of a simplical complex 
$\Delta$. We say that the $\ell$-cycle $\sigma \neq 0$ is \emph{minimal} among the $\ell$-cycles of $\Delta$ 
if there is
no $\ell$-cycle $\sigma' \neq 0$ such that for the simplicial complexes
$\widetilde{\sigma}$ and $\widetilde{\sigma'}$ induced by the support of the cycles there is an index $i$ satisfying $f_i^{\widetilde{\sigma}}>f_i^{\widetilde{\sigma'}}$ and 
$f_j^{\widetilde{\sigma}}=f_j^{\widetilde{\sigma'}}$ for  $i < j \leq d-1$.
We can now formulate our result concerning the last strand of the resolution of $\KK[\sd^r(\Delta)]$,
assuming that $\Delta$ has non-trivial homology in top-dimension. 

\begin{proposition}\label{prop:BarycentricHomology}
 Let $d-1\geq 1$ and let $\Delta$ be a $(d-1)$-dimensional simplicial complex such that  and 
  $\widetilde{H}_{d-1}(\Delta;\KK)\neq 0$.
  Let further $\sigma$ be a minimal homology $(d-1)$-cycle of $\Delta$ and let
  \begin{equation*}
    \widetilde{\sigma}=\{F\in \Delta~:~F\subseteq G\mbox{ for some } G \mbox{ in the support of } \sigma \}
  \end{equation*}
 be the corresponding induced subcomplex of $\Delta$.
  Then
  \begin{itemize}
    \item[(i)] for $r\geq 1$ $\beta_{i,i+d}(\KK[\sd^r(\Delta)])$ for $\# V_r^{\sigma}  -d\leq i\leq \pdim( \KK[\sd^r(\Delta)])$, where $V_r^{\sigma}$ denotes the vertex set of $\widetilde{\sigma}^{\langle r\rangle}$. If $r$ is large, then in addition  $\beta_{i,i+d}(\KK[\sd^r(\Delta)])=0$ for $0\leq i<\# V_r^{\sigma}  -d$.
    \item[(ii)]
      \begin{equation*}
        \lim_{r\rightarrow\infty}\frac{\#\{i~:~
              \beta_{i,i+d}(\KK[\sd^r(\Delta)])\neq 0\}}{\pdim (\KK[\sd^r(\Delta)])}=1-
                      \frac{f_{d-1}^{\widetilde{\sigma}}}{f_{d-1}^{\Delta}}.
      \end{equation*}
  \end{itemize}
\end{proposition}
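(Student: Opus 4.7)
The plan is to exploit the top-dimensional nature of the strand: since $\sd^r(\Delta)$ has no faces of dimension $d$, for any $W\subseteq V(\sd^r(\Delta))$ the reduced homology $\widetilde{H}_{d-1}(\sd^r(\Delta)_W;\KK)$ coincides with the $\KK$-vector space of $(d-1)$-cycles of the induced subcomplex (there are no boundaries to quotient by). Combined with \ref{lem:cycles}, which establishes a bijection between the $(d-1)$-cycle spaces of $\Delta$ and of $\sd^r(\Delta)$, Hochster's formula \ref{hochster} reduces the condition $\beta_{i,i+d}(\KK[\sd^r(\Delta)])\neq 0$ to a purely combinatorial one: there exists a $(d-1)$-cycle $\tau$ of $\Delta$ and a subset $W\subseteq V(\sd^r(\Delta))$ with $\#W=i+d$ and $V(\sd^r(\widetilde{\tau}))\subseteq W$.

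For the non-vanishing in (i) I would pick any $W$ containing $V_r^\sigma$, so that $\sd^r(\widetilde{\sigma})$ sits as an induced subcomplex of $\sd^r(\Delta)_W$ and the image of $\sigma$ furnishes a non-trivial $(d-1)$-cycle there. Letting $\#W$ range from $\#V_r^\sigma$ up to $\#V(\sd^r(\Delta))$ produces non-zero Betti numbers for every $\#V_r^\sigma-d\leq i\leq \#V(\sd^r(\Delta))-d$, which by Auslander--Buchsbaum and $\depth(\KK[\Delta])\leq d$ covers the interval up to $\pdim(\KK[\sd^r(\Delta)])$ asserted in the proposition. For the vanishing part, the contrapositive says that $\beta_{i,i+d}(\KK[\sd^r(\Delta)])\neq 0$ forces $\#V_r^\tau\leq i+d$ for some $(d-1)$-cycle $\tau$ of $\Delta$. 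The minimality of $\sigma$, which is lexicographic in the $f$-vector of its support from the top, together with \ref{lem:minimal} applied to every pair $(\widetilde{\sigma},\widetilde{\tau})$, yields $\#V_r^\tau\geq \#V_r^\sigma$ for $r$ sufficiently large, giving $i\geq \#V_r^\sigma-d$.

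Part (ii) is then a straightforward counting argument. By (i), for large $r$ the number of indices $i$ with $\beta_{i,i+d}(\KK[\sd^r(\Delta)])\neq 0$ equals $f_0^{\sd^r(\Delta)}-\#V_r^\sigma$ up to an additive constant depending only on $d$ and $\depth(\KK[\Delta])$. Applying \ref{cor:vertices} both to $\Delta$ and to $\widetilde{\sigma}$ gives
$$\frac{f_0^{\sd^r(\Delta)}}{(d!)^r}\longrightarrow p^{-1}_{d-1,2}\,f_{d-1}^{\Delta},\qquad \frac{\#V_r^\sigma}{(d!)^r}=\frac{f_0^{\sd^r(\widetilde{\sigma})}}{(d!)^r}\longrightarrow p^{-1}_{d-1,2}\,f_{d-1}^{\widetilde{\sigma}}.$$
Dividing the count by $\pdim(\KK[\sd^r(\Delta)])=f_0^{\sd^r(\Delta)}-\depth(\KK[\Delta])$ and letting $r\to\infty$ yields the asserted limit $1-f_{d-1}^{\widetilde{\sigma}}/f_{d-1}^{\Delta}$, since the additive constants are negligible relative to the $(d!)^r$ scaling.

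The main obstacle is controlling the vanishing in (i). Minimality of $\sigma$ only directly governs $f_{d-1}$ of its support, with lower $f_i$ serving as tie-breakers; the passage from this lex-minimality to an actual inequality $\#V_r^\tau\geq \#V_r^\sigma$ between vertex counts of iterated barycentric subdivisions is exactly what \ref{lem:minimal} provides, and this is the reason the vanishing is stated only for $r$ large. A subsidiary point is that the inequality must hold uniformly across all $(d-1)$-cycles $\tau$ of $\Delta$, not just for a single competitor, but this follows by applying \ref{lem:minimal} to each pair individually.
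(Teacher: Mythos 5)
Your proposal follows essentially the same route as the paper's proof: non-vanishing by growing the vertex set from $V_r^\sigma$ and observing that the cycle $\sigma_r$ survives in every induced subcomplex containing it; vanishing for large $r$ via \Ref{lem:cycles} to identify $(d-1)$-cycles of $\sd^r(\Delta)$ with those of $\Delta$ and \Ref{lem:minimal} to compare vertex counts of their supports under iterated subdivision; and part (ii) by the elementary count followed by \Ref{cor:vertices}. Your explicit remark that in top dimension there are no boundaries, so $\widetilde H_{d-1}$ of an induced subcomplex is just its $(d-1)$-cycle space, is the mechanism the paper uses implicitly and is a nice clarification. One small caution: the inequality you invoke to reach $\pdim$ should be $\depth(\KK[\sd^r(\Delta)]) \geq d$ (not $\leq d$), which by Auslander--Buchsbaum gives $\pdim \leq \#V_r^\Delta - d$ so the grown-set argument actually covers $[\,\#V_r^\sigma-d,\ \pdim\,]$; as $\depth\leq\dim=d$ always, this amounts to the Cohen--Macaulay case, a point the paper's own proof also glosses over.
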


\begin{proof}
  One observes that $\sd^r(\Delta)_{V_r^{\sigma}}=\sd^r(\widetilde{\sigma})$. Since $\sigma$ is a homology $(d-1)$-cycle of $\Delta$, the $r$\textsuperscript{th} barycentric subdivision $\sd^r(\widetilde{\sigma})$ of its induced complex $\widetilde{\sigma}$ 
  gives rise to a homology $(d-1)$-cycle $\sigma_r$ of $\sd^r(\Delta)$ and  
  we conclude that $\widetilde{H}_{d-1}(\sd^r(\Delta)_{V_r^{\sigma}};\KK)\neq 0$. It
  follows from Hochster's formula \ref{hochster} that for
  $i=\# V_r^{\sigma}-d$ we have $\beta_{i,i+d}(\KK[\sd^r(\Delta)])\neq 0$. 
  Moreover, if we consider induced
  subcomplexes $\sd^r(\Delta)_A$ of $\sd^r(\Delta)$ with $V_r^{\sigma}\subseteq A$, then $\sigma_r$ will remain
  a homology $(d-1)$-cycle in $\sd^r(\Delta)_A$. This in particular implies
  $\beta_{i,i+d}(\KK[\sd^r(\Delta)])\neq 0$ for $\# V_r^{\sigma}-d\leq i\leq \pdim (\KK[\sd^r(\Delta)])$
  and hence the part of (i) concerning the non-vanishing Betti numbers  follows.
  For the vanishing part, let $\tau$ be a $(d-1)$-cycle of
  $\sd^r(\Delta)$. By \ref{lem:cycles} it follows that $\tilde{\tau}$ is the union of some 
  $\sd^r(\tilde{\sigma'})$ for $(d-1)$-cycles $\sigma'$ of $\Delta$. Thus for $r$ large enough by
  \ref{lem:minimal} it follows that the vertex set of $\tilde{\tau}$ is of larger cardinality than 
  the vertex set of $\sd^r(\tilde{\sigma})$ for the minimal $(d-1)$-cycle $\sigma$ of $\Delta$.
  Now Hochster's formula \ref{hochster} shows the vanishing.

  It remains to show (ii). Let $V_r^{\Delta}$ be the vertex set of $\sd^r(\Delta)$. We know from (i) that
  \begin{align*}
    &\empty\frac{1}{\pdim (\KK[\sd^r(\Delta)])} \#\{i~:~\beta_{i,i+d}(\KK[\sd^r(\Delta)])\neq 0\}\\
     &= \frac{1}{\pdim( \KK[\sd^r(\Delta)])}\left(\pdim (\KK[\sd^r(\Delta)])-(\# V_r^{\sigma}-d-1)\right)\\
     &=1 - \frac{\# V_r^{\sigma}-d-1}{\# V_r^{\Delta}-\depth (\KK[\sd^r(\Delta)])}\\
     &= 1 - \frac{\frac{1}{(d!)^r}(\# V_r^{\sigma}-d-1)}{\frac{1}{(d!)^r}(\# V_r^{\Delta}-\depth( \KK[\Delta]))}.
  \end{align*}
  We infer from \ref{cor:vertices} that numerator and denominator of this fraction converge to $p^{-1}_{d-1,2}f_{d-1}^{\widetilde{\sigma}}$ and
  $p^{-1}_{d-1,2}f_{d-1}^{\Delta}$, respectively. Thus, we can finally
  conclude,
  \begin{equation*}
    \lim_{r\rightarrow \infty}\frac{1}{\pdim (\KK[\sd^r(\Delta)]})
         \# \{i~:~\beta_{i,i+d}(\KK[\sd^r(\Delta)])\neq 0\} =
         1-\frac{p^{-1}_{d-1,2}f_{d-1}^{\widetilde{\sigma}}}{p^{-1}_{d-1,2}f_{d-1}^{\Delta}p_{d+1}}=
         1-\frac{f_{d-1}^{\widetilde{\sigma}}}{f_{d-1}^{\Delta}}.
  \end{equation*}
  \end{proof}

  We now provide an example that shows that for any $d$ indeed any rational number in the half-open intervall $[0,1)$ can occur as limit in (ii) of the above proposition.
  \begin{example}\label{ex:limit}
Let $\frac{p}{q}\in [0,1)\cap\mathbb{Q}$. \\
{\sf Case 1:}  $\frac{p}{q}=0$.\\
Let $\Delta$ be the boundary of a $(d-1)$-simplex. In this case, the only minimal $(d-1)$-homology cycle of $\Delta$ is $\Delta$ itself, and it follows from Proposition \ref{prop:BarycentricHomology} that 
\begin{equation*}
\lim_{r\rightarrow \infty}\frac{\# \{i~:~\beta_{i,i+d}(\KK[\sd^r(\Delta)])\neq 0\}}{\pdim (\KK[\sd^r(\Delta)])}=0.
\end{equation*}
Indeed, for any $r$, we have $\beta_{i,i+d}(\KK[\sd^r(\Delta)])\neq 0$ if and only if $i=\pdim (\KK[\sd^r(\Delta)])$.\\

\noindent{\sf Case 2:} $p>0$.\\
Our construction relies on a result of Lee \cite{Lee} and 
Bj\"orner and Linusson \cite[Theorem 1, Theorem 7]{BjoernerLinusson}. 
They showed that for any $d$, there exists $N(d)\in \mathbb{N}$ such that 
for all even numbers $n\geq N(d)$ there exists a simple $d$-polytope with 
$n$ vertices or, by taking the dual, a simplicial $d$-polytope with $n$ 
facets, i.e., a simplicial polytopal $(d-1)$-sphere with $n$ facets.

By this result there exists a simplicial $(d-1)$-sphere $\Delta_1$ with $2N(d)(q-p)$ 
facets. Let $\Delta$ be the Cohen-Macaulay complex obtained from $\Delta_1$ by stacking $2N(d)p$ copies of the $(d-1)$-simplex over a specified $(d-2)$-face of $\Delta_1$. In this case $\Delta_1$ is the only minimal 
$(d-1)$-homology cycle of $\Delta$ and \ref{prop:BarycentricHomology} 
yields 
\begin{equation*}
        \lim_{r\rightarrow\infty}\frac{\#\{i~:~
              \beta_{i,i+d}(\KK[\sd^r(\Delta)])\neq 0\}}{\pdim (\KK[\sd^r(\Delta)])}=1-\frac{2N(d)(q-p)}{2N(d)q}=1-\frac{q-p}{q}=\frac{p}{q}.
\end{equation*}
  \end{example}

\section{Edgewise subdivisions}
\subsection{Algebraic invariants}

We have listed in Table \ref{How} how  the  basic invariants behave under the $r$\textsuperscript{th} edgewise subdivision of a $(d-1)$-dimensional simplicial complex $\Delta$. It remains to provide a precise statement concerning the largest degree $t_1(\Delta^{\langle r\rangle})$ of a minimal generator of $I_{\Delta^{\langle r\rangle}}$.

\begin{lemma}\label{lem:minGen}
 Let $\Delta$ be a $(d-1)$-dimensional simplicial complex. Let $N(\Delta)$ be the set of minimal non-faces of $\Delta$ of cardinality $t_1(\KK[\Delta])$. Then:
\begin{itemize}
\item[(i)] If $\Delta$ is flag or $\Delta=\Delta_{d-1}$, then $t_1(\Delta^{\langle r\rangle})=2$.
\item[(ii)] If $\Delta$ is neither flag nor a $(d-1)$-simplex, the following two cases can occur. 
 If there exists $F\in N(\Delta)$ and a vertex $v\in \Delta$ such that $\partial(F)\ast \{v\} \subseteq \Delta$, then, for any $r\geq 2$,  $t_1(\KK[\Delta^{\langle r\rangle}])=t_1(\KK[\Delta])$. Otherwise, $t_1(\KK[\Delta^{\langle r\rangle}])=t_1(\KK[\Delta])-1$.
\end{itemize}
\end{lemma}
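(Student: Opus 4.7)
The plan is to analyze the minimal non-faces of $\Delta^{\langle r\rangle}$ according to which of the two conditions in \ref{notat} they violate. Any subset $\{a_1,\dots,a_s\}\subseteq\Omega_{r,n}$ of vertices of $\Delta^{\langle r\rangle}$ fails to be a simplex either because $U:=\bigcup_i\supp(a_i)\notin\Delta$ (type (i)) or because some pair violates (ii); as (ii) is a pairwise condition, its violations produce only minimal non-faces of cardinality $2$. Hence every minimal non-face of cardinality at least $3$ is of type (i), and I concentrate on bounding $s$ in that case.

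Part (i) is then immediate. If $\Delta$ is flag then $t_1(\KK[\Delta])=2$, so every type-(i) non-face $\{a,b\}$ with $\supp(a)\cup\supp(b)\notin\Delta$ is already of size two. If $\Delta=\Delta_{d-1}$, condition (i) is automatic and only violations of (ii) can occur, producing pairs such as $\{r\fe_1,r\fe_n\}$ for $r\geq 2$.

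For (ii) the key combinatorial fact is: given a minimal type-(i) non-face $\{a_1,\dots,a_s\}$ and any minimal non-face $F'\subseteq U$ of $\Delta$, one has $F'\not\subseteq \bigcup_{i\neq j}\supp(a_i)\in\Delta$ for every $j$, so there is a distinguished vertex $w_j\in F'\cap\supp(a_j)$ lying in no other $\supp(a_i)$; the $w_j$ are distinct, hence $s\leq |F'|\leq t_1(\KK[\Delta])$. This gives the general upper bound $t_1(\KK[\Delta^{\langle r\rangle}])\leq t_1(\KK[\Delta])$. To sharpen it in Case~B, suppose for contradiction $s=t_1(\KK[\Delta])$. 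Then $|F'|=s$, $F'=\{w_1,\dots,w_s\}\in N(\Delta)$, and $\supp(a_j)\cap F'=\{w_j\}$. If $U=F'$, uniqueness forces $\supp(a_j)=\{w_j\}$ and hence $a_j=r\fe_{w_j}$, so for $r\geq 2$ the value $r$ appears in $\ffi(a_j-a_{j'})$ on a range of indices, violating (ii). If instead $U\supsetneq F'$ and some $w\in U\setminus F'$ lies in two distinct supports $\supp(a_{j_1})$ and $\supp(a_{j_2})$, then for every $j$ the set $(F'\setminus\{w_j\})\cup\{w\}$ is contained in $\bigcup_{i\neq j}\supp(a_i)\in\Delta$, so $\partial F'\ast\{w\}\subseteq\Delta$, contradicting Case~B. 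The remaining configuration, in which every $w\in U\setminus F'$ lies in exactly one $\supp(a_j)$, is the main obstacle; it has to be excluded by a finer use of condition (ii) on the internal structure of the supports $\supp(a_j)$ that either produces a shared vertex and reduces to the previous case or directly forces a violation of (ii).

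For the lower bound, one gives explicit constructions. In Case~A, fix $F=\{v_1,\dots,v_k\}\in N(\Delta)$ and a vertex $v$ with $\partial F\ast\{v\}\subseteq\Delta$; note that $v\notin F$, since otherwise the facet of $\partial F$ opposite to $v$ joined with $v$ would be $F$ and force $F\in\Delta$. Set $a_j:=(r-1)\fe_v+\fe_{v_j}$ for $j=1,\dots,k$. Then $\supp(a_j)=\{v,v_j\}\in\partial F\ast\{v\}\subseteq\Delta$, the differences $a_j-a_{j'}=\fe_{v_j}-\fe_{v_{j'}}$ immediately satisfy (ii), the full support union $\{v\}\cup F$ is not in $\Delta$, and removing any $a_j$ leaves $(F\setminus\{v_j\})\cup\{v\}\in\partial F\ast\{v\}\subseteq\Delta$. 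This produces a minimal non-face of size $k=t_1(\KK[\Delta])$. In Case~B the same recipe with $v_1\in F$ in place of the exterior vertex yields $a_j:=(r-1)\fe_{v_1}+\fe_{v_{j+1}}$ for $j=1,\dots,k-1$: the pairwise check for (ii) is identical, removing $a_j$ leaves $F\setminus\{v_{j+1}\}\in\Delta$, and one obtains a minimal non-face of size $k-1$, matching the sharpened upper bound.
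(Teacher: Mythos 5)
You take essentially the same route as the paper: the observation that minimal non-faces of cardinality at least three can only violate condition (i), the upper bound $t_1(\KK[\Delta^{\langle r\rangle}])\leq t_1(\KK[\Delta])$ via the distinguished vertices $w_j$, and the explicit lower-bound constructions in Cases~A and~B all mirror the paper's argument. The genuine gap is the one you flag yourself. After reducing Case~B to $s=t_1(\KK[\Delta])$, $\supp(a_j)\cap F'=\{w_j\}$ and $U\supsetneq F'$, you still need to exclude the configuration in which every vertex of $U\setminus F'$ lies in exactly one $\supp(a_j)$, i.e.\ writing $a_j=\fe_{w_j}+z_j$, the $z_j$ have pairwise disjoint supports. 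Minimality of $G$ then only gives $(F'\setminus\{w_j\})\cup\bigcup_{i\neq j}\supp(z_i)\in\Delta$ for each $j$, which with disjoint supports does not produce a common vertex $v$ with $\partial F'\ast\{v\}\subseteq\Delta$. That step is the entire content of the sharpening, and it is missing.

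You should also be aware that the paper's proof is terse at exactly this point: it asserts that because $\partial G\subseteq\Delta^{\langle r\rangle}$ one has $\partial(F)\ast\bigcup_i\supp(z_i)\subseteq\Delta$, but that conclusion requires $(F\setminus\{i_j\})\cup\bigcup_i\supp(z_i)\in\Delta$ for every $j$, whereas minimality only gives $(F\setminus\{i_j\})\cup\bigcup_{i\neq j}\supp(z_i)\in\Delta$; the two coincide only when $\supp(z_j)\subseteq\bigcup_{i\neq j}\supp(z_i)$, precisely the condition you could not force. Moreover the configuration appears realizable: for $\Delta$ on $\{1,\dots,6\}$ with facets $\{1,2,4,5\}$, $\{1,3,4,6\}$, $\{2,3,5,6\}$ one checks that $\{1,2,3\}\in N(\Delta)$, that no vertex $v$ satisfies $\partial\{1,2,3\}\ast\{v\}\subseteq\Delta$, and yet $\{\fe_1+\fe_4,\,\fe_2+\fe_5,\,\fe_3+\fe_6\}$ is a size-three minimal non-face of $\Delta^{\langle 2\rangle}$. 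So you have isolated the crux of the argument correctly, but neither your proposal nor the paper's write-up closes it, and the Case~B claim itself merits re-examination.
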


\begin{proof}
Since flag-ness is preserved under edgewise subdivisions, we have $t_1(\Delta^{\langle r\rangle})=2$ if $\Delta$ is flag. If $\Delta$ is a $(d-1)$-simplex, then $\Delta^{\langle r\rangle}$ is flag by the definition of edgewise subdivision, which shows $t_1(\Delta^{\langle r\rangle})=2$. 

To show (ii), first we prove that $t_1(\Delta^{\langle r\rangle})\leq t_1(\Delta)$. For this aim, let $G=\{v_1,\ldots,v_m\}$ be a minimal non-face of $\Delta^{\langle r\rangle}$. Then at least one of the two conditions in the definition of edgewise subdivisions fails. If the second one fails, we have $|G|=2$ and hence $G$ gives rise to a minimal generator of $I_{\Delta^{\langle r\rangle}}$ of degree $2<t_1(\Delta)$. So assume, only the first condition fails, i.e., $H:=\bigcup_{a\in G}\supp(a)\notin \Delta$. There exists $F\in N(\Delta)$ such that $F\subseteq H$. Since $G$ is minimal, for any $1\leq j\leq m$, there exists $i_j\in F$ such that $i_j\in \supp(v_j)$ and $i_j\notin \supp(v_\ell)$ for $1\leq \ell \leq m$ and $\ell \neq j$. This implies that $\#G=m\leq s$, which finally shows $t_1(\Delta^{\langle r\rangle})\leq t_1(\Delta)$.\\
Next, we show $t_1(\Delta^{\langle r\rangle})\geq t_1(\Delta)-1$. Let $F\in N(\Delta)$. Without loss of generality let $F=[s]$, where $s=t_1(\Delta)$. For $1\leq i\leq s-1$ set $v_i=\fe_i+(r-1)\fe_s$ and let $G=\{v_1,\ldots,v_{s-1}\}$. Then, $H\in \Delta^{\langle r\rangle}$ for any $H\subsetneq G$ but $G\notin \Delta^{\langle r\rangle}$, i.e., $G$ is a minimal non-face of $\Delta^{\langle r\rangle}$. This implies the claimed inequality. \\
Now assume that there exists $F\in N(\Delta)$ and a vertex $v\in \Delta$ such that $\partial(F)\ast \{v\} \subseteq \Delta$. Without loss of generality assume $F=[s]$ and $v=s+1$. We set $w_i=\fe_i+(r-1)\fe_{s+1}$ for $1\leq i\leq s$ and $G=\{w_1,\ldots,w_s\}$. By minimality of $F$ we have $H\in \Delta^{\langle r\rangle}$ for any $H\subsetneq G$. Since $F$ is a non-face, we further know that $G\notin \Delta^{\langle r\rangle}$. Hence, $G$ is a minimal non-face of $\Delta^{\langle r\rangle}$ of cardinality $t_1(\Delta)$, which implies $t_1(\Delta^{\langle r\rangle})\geq t_1(\Delta)$ and hence $t_1(\Delta^{\langle r\rangle})=t_1(\Delta)$ in this case.\\
Finally assume that for all $F\in N(\Delta)$ and all vertices $v\in \Delta$ we have $\partial(F)\ast \{v\} \not\subseteq \Delta$. Let $G=\{v_1,\ldots,v_m\}$ be a minimal non-face of $\Delta^{\langle r\rangle}$ such that $F\bigcup_{i=1}^m \supp(v_i)$, where $F$ is a minimal non-face of $\Delta$. Without loss of generality assume $F=[s]$. It follows from the second paragraph of this proof that $\#G\leq \#F$. If $\#G<\#F$, then $G$ corresponds to a minimal generator of $I_{\Delta^{\langle r\rangle}}$ of degree $<t_1(\Delta)$. So assume $\#G=\#F$. From the second paragraph of this proof it follows that for any $1\leq j\leq s$ there exists a unique $1\leq i_j\leq s=m$, such that $j\in\supp(v_{i_j})$. Since $\{v_\ell,v_k\}\in \Delta^{\langle r\rangle}$ for any $1\leq \ell<k\leq s$, it must hold $(v_{i_j})_j =1$ for $1\leq j\leq s$. Hence, any vertex $v_j\in G$ is of the form $v_j=\fe_{i_j}+z_j$, where $z_j\in \Delta^{\langle r-1\rangle}$ and $\supp(z_i)\cap [s]=\emptyset$. Since $G
 $ is a minimal non-face of $\Delta^{\langle r\rangle}$, the boundary of $G$ is a subcomplex of $\Delta^{\langle r\rangle}$ and hence $\partial (F)\ast\bigcup_{i=1}^s\supp(z_i)$ is a subcomplex of $\Delta$. Since  $\bigcup_{i=1}^s\supp(z_i)\neq \emptyset$ ($r\geq 2$), we arrive at a contradiction and the claim follows. 
\end{proof}

In the following we will analyze the Betti numbers of the $r$\textsuperscript{th} edgewise subdivision of the $(d-1)$-dimensional simplex $\Delta_{d-1}$ . We start by  showing  that, locally, $\Delta_{d-1}^{\langle r \rangle}$  behaves as a barycentric subdivision of the boundary of a $(d-1)$-simplex. 

\begin{proposition}\label{lem:InnerFace}
  Let $r\geq d$ be positive integers. Then for any face $F$ of $\Delta_{d-1}^{\langle r \rangle}$,
  satisfying
  \begin{eqnarray} \label{eq:face}
     \partial|F| & = & |F| \cap \partial |\Delta_{d-1}^{\langle r \rangle}|,
  \end{eqnarray}
  the link
  $\lk_{\Delta_{d-1}^{\langle r \rangle}}(F)$ is abstractly isomorphic
  to the barycentric subdivision of the boundary of a $(d-\# F)$-simplex.
\end{proposition}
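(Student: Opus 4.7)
The plan is to recognize $\Delta_{d-1}^{\langle r\rangle}$ as the order complex of the poset $(\Omega_{r,d},\leq)$, where $a\leq\tilde a$ iff $\ffi(\tilde a-a)\in\{0,1\}^d$; condition~(i) in the definition of the edgewise subdivision is automatic for $\Delta=\Delta_{d-1}$. Under this identification a face is exactly a chain, and the link of a chain $F=\{a^{(1)}<\cdots<a^{(k)}\}$ becomes the order complex of the subposet $P_F$ of elements comparable with every $a^{(i)}$. Since $F$ is a chain, $P_F$ splits as a disjoint union $P_F=P_0\sqcup P_1\sqcup\cdots\sqcup P_k$ with $P_0=\{b<a^{(1)}\}$, $P_i=\{a^{(i)}<b<a^{(i+1)}\}$ for $1\leq i\leq k-1$, and $P_k=\{b>a^{(k)}\}$. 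Consequently
\[
  \lk_{\Delta_{d-1}^{\langle r\rangle}}(F)\;\cong\;\Delta(P_0)\ast\Delta(P_1)\ast\cdots\ast\Delta(P_k),
\]
reducing the claim to an analysis of each piece $P_i$.

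First I would translate the geometric hypothesis $\partial|F|=|F|\cap\partial|\Delta_{d-1}^{\langle r\rangle}|$ into combinatorial conditions on the supports. Reading off barycentric coordinates, this equation is equivalent to (a) $\bigcup_i\supp(a^{(i)})=[d]$ and (b) for each $i$ the ``unique-support'' set $U_i:=\supp(a^{(i)})\setminus\bigcup_{\ell\neq i}\supp(a^{(\ell)})$ is non-empty. In particular the $U_i$ are pairwise disjoint, so $\sum_i|U_i|\geq k$, and the ``shared'' coordinates $[d]\setminus\bigcup_i U_i$ have cardinality at most $d-k$. Next, for each middle piece $P_i$ (with $1\leq i\leq k-1$), an element $b\in P_i$ is uniquely determined by a vector $\epsilon\in\{0,1\}^d$ with $0<\epsilon<D^{(i)}:=\ffi(a^{(i+1)})-\ffi(a^{(i)})$ via $\ffi(b)=\ffi(a^{(i)})+\epsilon$. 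The key technical point is that, under the interior hypothesis combined with $r\geq d$, the weakly-increasing condition on $\ffi(a^{(i)})+\epsilon$ is automatic for every such $\epsilon$, so $P_i$ is an antichain in bijection with the non-empty proper subsets of $\supp(D^{(i)})$. The extremes $P_0$ and $P_k$ can be brought into the same framework by introducing virtual endpoints $a^{(0)}$ and $a^{(k+1)}$ that encode the minimal, respectively maximal, $\ffi$-coordinates forced by the combined support conditions of $F$.

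The concluding step is to identify the resulting join with $\sd(\partial\Delta_{d-k})$ by exhibiting a canonical $(d-k+1)$-element set $T$, built from the $U_i$ together with the shared coordinates, and an order-preserving bijection between $P_F$ and the poset of non-empty proper subsets of $T$; by definition the order complex of that poset is $\sd(\partial\Delta_{d-k})$. The main obstacle I expect is this last global identification together with the handling of $P_0$ and $P_k$: while the middle pieces are uniformly controlled by the jump vectors $D^{(i)}$, the extremes involve all admissible lattice displacements below $a^{(1)}$ or above $a^{(k)}$ that remain pairwise compatible with the entire chain, and proving that these are parameterized by exactly the expected subsets of $T$ uses the hypothesis $r\geq d$ crucially to ensure that $\Omega_{r,d}$ has enough room for every such displacement to be realized by a genuine vertex.
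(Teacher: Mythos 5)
Your proposal is built on the claim that $\Delta_{d-1}^{\langle r\rangle}$ is the order complex of the poset $(\Omega_{r,d},\leq)$ with $a\leq\tilde a$ iff $\ffi(\tilde a-a)\in\{0,1\}^d$. That relation is \emph{not transitive}, hence not a partial order, and $\Delta_{d-1}^{\langle r\rangle}$ is not an order complex. Already for $d=2$, $r=2$ one has $(0,2)\leq(1,1)$ and $(1,1)\leq(2,0)$ (both $\ffi$-differences equal $(1,0)$), yet $\ffi\bigl((2,0)-(0,2)\bigr)=(2,0)\notin\{0,1\}^2$, so $(0,2)\not\leq(2,0)$. Equivalently, the $1$-skeleton of $\Delta_{d-1}^{\langle r\rangle}$ is not a comparability graph, so this flag complex cannot be the order complex of any poset.

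This failure is fatal for the argument, because the join decomposition of the link that you rely on is a special feature of order complexes. Take $d=3$, $r=3$ and the unique interior vertex $v=(1,1,1)$. Its link has vertex set $\{(2,1,0),(2,0,1),(1,2,0),(0,2,1),(1,0,2),(0,1,2)\}$, and a direct check of pairwise $\ffi$-differences shows the link is a $6$-cycle, i.e.\ $\sd(\partial\Delta_2)$, as the proposition asserts. Your formula instead predicts $\lk(v)\cong\Delta(P_0)\ast\Delta(P_1)$ with $P_0=\{(0,2,1),(1,0,2),(0,1,2)\}$ (the ``$b<v$'' side) and $P_1=\{(2,1,0),(2,0,1),(1,2,0)\}$ (the ``$b>v$'' side). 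That join is $2$-dimensional and contains, e.g., the edge $\{(0,2,1),(2,1,0)\}$, which is not an edge of the link since $\ffi\bigl((2,1,0)-(0,2,1)\bigr)=(2,1,0)$. So the decomposition $\lk(F)\cong\Delta(P_0)\ast\cdots\ast\Delta(P_k)$ already fails for the smallest interior face, and the subsequent analysis of the pieces $P_i$ (and the claim that the middle pieces are antichains, which is also incompatible with producing a barycentric subdivision) cannot be carried out. A secondary issue: even if the poset framework were available, $\sd$ of a join is not the join of the $\sd$'s, so identifying $\Delta(P_0)\ast\cdots\ast\Delta(P_k)$ with $\sd(\partial\Delta_{d-k})$ would need an extra argument.

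The paper's proof sidesteps all of this. It first shows (two reduction facts) that all faces of a fixed size satisfying \ref{eq:face} have isomorphic links, and that the isomorphism type is independent of $r$ once $r\geq d-\#F+1$; this reduces the proposition to one explicit face $F$ with vertices $w_i=\sum_{j=1}^{d-s}\fe_j+\fe_{d-s+i}$ inside $\Delta_{d-1}^{\langle d-s+1\rangle}$. It then writes down an explicit bijection $\Phi$ from the vertices of $\lk(F)$ to the nonempty proper subsets of a $(d-s+1)$-element set and checks that $\Phi$ preserves and reflects edges; since both complexes are flag, this establishes the isomorphism. If you want to keep a ``combinatorial identification'' strategy, you must work with the $1$-skeleton (or a clique description of the link) directly rather than through a non-existent ambient poset.
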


\begin{proof}
  In order to prove the statement we will first list two facts that allow
  to simplify the situation.

  \begin{itemize}
    \item If $F$ and $G$ are two faces of equal dimension in
      $\Delta_{d-1}^{\langle r \rangle}$ satisfying \ref{eq:face},
      then it is straightforward to show that their links are isomorphic as
      simplicial complexes.
    \item Let $F_1$ be a face of
      $\Delta_{d-1}^{\langle r \rangle}$ and $F_2$ a face of
      $\Delta_{d-1}^{\langle s\rangle}$ both satisfying \ref{eq:face} in their respective complexes.
      If
      $\# F_1= \# F_2=t$ and $r,s\geq d-t+1$, then it is easy to show that
      $\lk_{\Delta_{d-1}^{\langle r \rangle}}(F_1)$ and
      $\lk_{\Delta_{d-1}^{\langle s\rangle}}(F_2)$ are also isomorphic as simplicial
      complexes.
  \end{itemize}

  Combining those two reductions, we will now show the claim
  for a specific face $F$ in the $(d-\# F +1)$\textsuperscript{st} edgewise
  subdivision of $\Delta_{d-1}$.
  More precisely, let $s$ be a fixed positive integer. Let
  \begin{align*}
     w_i&=(\underbrace{1,\ldots,1}_{d-s},\underbrace{0,\ldots,0}_{i- 1},
          \underbrace{1}_{\mbox{position }d+i-s},\underbrace{0,\ldots,0}_{s-i})\\
        &=\sum_{j=1}^{d-s}\fe_j+\fe_{d-s+i},
  \end{align*}
  \noindent for $1\leq i\leq s$. Let
  $F$ be the abstract simplex whose geometric realization has vertex set $\{w_1,\ldots,w_s\}$.
  Then all vertices of $F$ lie on the boundary and so does the convex hull of any $s-1$ subset of the
  vertices. But $F$ itself does not lie on the boundary of the simplex. Hence $F$ satisfies
  \ref{eq:face}. It now suffices to show that
$\lk_{\Delta_{d-1}^{\langle d-s+1\rangle}}(F)$ is isomorphic to the barycentric
subdivision of the boundary of a $(d-s)$-simplex. Let $V$ be the vertex
set of $\lk_{\Delta_{d-1}^{\langle d-s+1\rangle}}(F)$. We define a map
$\Phi: V \rightarrow \{A~:~\emptyset \neq A\subsetneq [d-s]\cup \{d\}\}$ by
\begin{align*}
       v &\mapsto \left\{
           \begin{array}{ll} \supp(\ffi(v-w_1))\cap [d-s], & \mbox{ if } \ffi(v-w_1)\in \{0,1\}^d\\ & \\
                     \left[d-s\right]\setminus  \supp(\ffi(v-w_1))\cup\{d\} , & \mbox{ if } \ffi(v-w_1)\in \{-1,0\}^d.
                        \end{array} \right.
 \end{align*}

We claim that $\Phi$ is a bijection and moreover, that it induces an
isomorphism between $\lk_{\Delta_{d-1}^{\langle d-s+1\rangle}}(F)$ and
$\sd(\partial\Delta_{d-s})$ as simplicial complexes, where for the purpose of this proof
$\Delta_{d-s}$ denotes the $(d-s)$-simplex on vertex set
$[d-s]\cup \{d\}$.

{\sf Injectivity:} Let $u$, $v\in \lk_{\Delta_{d-1}^{\langle d-s+1\rangle}}(F)$ and
$u\neq v$. If $\ffi(u-w_1)\in \{0,1\}^d$ and
$\ffi(v-w_1)\in \{-1,0\}^d$ (or vice versa), then $d\in \Phi(v)$ but
$d\notin \Phi(u)$. Hence, $\Phi(u)\neq \Phi(v)$ in this case. Now, let
$\ffi(u-w_1)\in \{0,1\}^d$ and $\ffi(v-w_1)\in \{0,1\}^d$. Assume, by
contradiction, that $\Phi(u)=\Phi(v)$, i.e.,
$\supp(\ffi(u-w_1))\cap [d-s]=\supp(\ffi(v-w_1))\cap [d-s]$ and hence
the first $d-s$ components of $\ffi(u-w_1)$ and $\ffi(v-w_1)$ are
equal. This implies that the first $d-s$ components of $u$ and $v$
coincide. If $\ffi(u-w_1)_{d-s}=\ffi(v-w_1)_{d-s}=1$, it must hold
that $u_{d-s+1}=\ldots=u_d=0$ and $v_{d-s+1}=\ldots =v_d=0$ since
$\sum_{j=1}^du_j=\sum_{j=1}^d v_j=d-s+1$. This, in particular implies
that $u=v$. If $\ffi(u-w_1)_{d-s}=\ffi(v-w_1)_{d-s}=0$, we can infer
from $(w_1)_{d-s+1}=1$ and $\ffi(u-w_1)\in\{0,1\}^d$, that
$u_{d-s+1}=v_{d-s+1}=1$. As in the previous case, we conclude that
$u_{d-s+2}=\ldots =u_d=0$ and $v_{d-s+2}=\ldots =v_d=0$. Hence, again
$u=v$. Finally let $\ffi(u-w_1)\in \{-1,0\}^d$ and
$\ffi(v-w_1)\in \{-1,0\}^d$. Similar arguments as in the previous case
show that we must have $\Phi(u)\neq \Phi(v)$.

{\sf Surjectivity:} Let $\emptyset\neq G\subsetneq [d-s]\cup\{d\}$.
First assume that $d\notin G$. We define a vector $v\in \mathbb{Z}^d$
by setting $v_1=1$ if $1\notin G$ and $v_1=2$ if $1\in G$ and
successively, for $2\leq j\leq d-s$
\begin{align*}
    v_j=
        \begin{cases}
    -\sum_{\ell=1}^{j-1}v_\ell+j, &\mbox{ if } j\notin G\\
    -\sum_{\ell=1}^{j-1}v_\ell+j+1, &\mbox{ if } j\in G. \\
        \end{cases}
\end{align*}

Moreover, $v_{d-s+1}=0$ if $d-s\in G$ and $v_{d-s+1}=1$ if
$d-s\notin G$. For $d-s+2\leq j\leq d$, we set $v_j=0$. It is
straightforward to verify that $v\in \lk_{\Delta_{d-1}^{\langle d-s+1\rangle}}(F)$ and
$\Phi(v)=G$.
Now, suppose $d\in G$. In this case, we define a vector $v\in \mathbb{Z}^d$ by setting $v_1=1$ if $1\in G$ and $v_1=0$ if $1\notin G$. For $2\leq j\leq d-s$ we successively set
\begin{align*}
    v_j=
        \begin{cases}
    -\sum_{\ell=1}^{j-1}v_\ell+j, &\mbox{ if } j\notin G\\
    -\sum_{\ell=1}^{j-1}v_\ell+j-1, &\mbox{ if } j\in G. \\
        \end{cases}
\end{align*}
For $d-s+2\leq j\leq d-1$ we set $v_j=0$. In addition, we let $v_{d
-s+1}=1$ and $v_d=0$ if $d-s\notin G$ and $v_{d-s+1}=0$ and $v_d=1$ if
$d-s\in G$. One can easily check that indeed $v\in \lk_{\Delta_{d
-1}^{\langle d-s+1\rangle}}(F)$ and $\Phi(v)=G$, which completes the proof of
surjectivity of $\Phi$.

We can extend the map $\Phi$ to $\lk_{\Delta_{d-1}^{\langle d-s+1\rangle})}(F)$ by
mapping a face $A=\{a_1,\ldots,a_t\}$ of $\lk_{\Delta_{d-1}^{\langle d-s+1\rangle}}(F)$
to $\{\Phi(a_1),\ldots,\Phi(a_t)\}$. We need to show that
$\lk_{\Delta_{d-1}^{\langle d-s+1\rangle}}(F)$ and  $\sd(\partial\Delta_{d-s})$ are
isomorphic as simplicial complexes. Since both complexes are flag, it
suffices to show that $\Phi$ induces an isomorphism between their
$1$-skeleta. Let $\{v,w\}\in \lk_{\Delta_{d-1}^{\langle d-s+1\rangle}}(F)$. Then, we
either have $\ffi(v-w)\in\{0,1\}^d$ or $\ffi(v-w)\in\{-1,0\}^d$.
Without loss of generality we can assume that $\ffi(w-v)\in\{0,1\}^d$.
Moreover, since $v\in \lk_{\Delta_{d-1}^{\langle d-s+1\rangle}}(F)$, we know that
$\ffi(v-w_1)\in\{0,1\}^d$ or $\ffi(v-w_1)\in\{-1,0\}^d$.

\noindent{\sf Case 1:} $\ffi(v-w_1)\in\{0,1\}^d$\\
Let $1\leq \ell\leq d-s$ such that $\ffi(v-w_1)_{\ell}=1$. It follows
that
\begin{align*}
    \ffi(w-w_1)_\ell&=\ffi(w-v)_\ell+\ffi(v-w_1)_\ell\\
    &\geq \ffi(v-w_1)_\ell=1,
\end{align*}
\noindent since $\ffi(w-v)\in \{0,1\}^d$. Since $w\in \lk_{\Delta_{d-1}^{\langle d-s+1\rangle}}(F)$, we conclude $\ffi(w-w_1)_\ell=1$ and
$\ffi(w-w_1)\in \{0,1\}^d$. We can deduce $\supp(\ffi(v-
w_1))\subsetneq \supp(\ffi(w-w_1))$ and hence, $\Phi(v)\subsetneq
\Phi(w)$, i.e., $\{\Phi(v),\Phi(w)\}\in \sd(\partial\Delta_{d-s})$.

\noindent{\sf  Case 2:} $\ffi(v-w_1)\in\{-1,0\}^d$\\
We consider the two subcases, that $\ffi(w-w_1)\in\{-1,0\}^d$ and
$\ffi(w-w_1)\in\{0,1\}^d$. Suppose that we are in the first case. If
we have $\ffi(w-w_1)_\ell=-1$ for some $1\leq \ell\leq d-s$, then it follows from 
$\ffi(v-w)\in\{-1,0\}^d$ that
\begin{align*}
    \ffi(v-w_1)_\ell&=\ffi(v-w)_\ell+\ffi(w-w_1)_\ell\\
    &\leq \ffi(w-w_1)_\ell=-1.
\end{align*}
This implies $\supp(\ffi(w-w_1))\subsetneq\supp(\ffi(v-w_1))$ and hence
\begin{equation*}
\Phi(v)=[d-s]\setminus \supp(\ffi(v-w_1))\cup\{d\}\subsetneq
[d-s]\setminus \supp(\ffi(w-w_1))\cup\{d\}=\Phi(w).
\end{equation*}
In particular, $\{\Phi(v),\Phi(w)\}$ is an edge of
$\sd(\partial(\Delta_{d-s}))$
It remains to consider the case $\ffi(w-w_1)\in\{0,1\}^d$. Let $1\leq
\ell \leq d-s$ with $\ffi(w-w_1)_\ell=1$. We have
\begin{align*}
    \ffi(v-w_1)_\ell&=\ffi(v-w)_\ell+\ffi(w-w_1)_\ell\\
    &=\ffi(v-w)_\ell+1.
\end{align*}
Since $\ffi(v-w)\in\{-1,0\}^d$ and $\ffi(v-w_1)\in \{-1,0\}^d$, it
must hold that $\ffi(v-w)_\ell=-1$ and $\ffi(v-w_1)_\ell=0$. Using
that $d\in \Phi(v)$ but $d\notin \Phi(w)$, we can finally conclude that
\begin{equation*}
    \Phi(w)=\supp(\ffi(w-w_1))\cap [d-s]\subsetneq [d-s]\setminus\supp(\ffi(v-w_1))\cup\{d\}=\Phi(v)
\end{equation*}
and thus $\{\Phi(v),\Phi(w)\}\in \sd(\partial\Delta_{d-s})$.
This finishes the proof of containment of the $1$-skeleton of
$\lk_{\Delta_{d-1}^{\langle d-s+1\rangle}}(F)$ in the $1$-skeleton of $\sd(\partial
\Delta_{d-s})$. We omit the proof of the other inclusion since the
it follows from a similar reasoning.
\end{proof}

The following corollary is a special case of \ref{lem:InnerFace}.

\begin{corollary}\label{lem:InnerVertex}
  Let $r\geq d$ be positive integers. Then for any vertex $v = (i_1,\ldots,i_n)$ in the interior of
  $\Delta_{d-1}^{\langle r \rangle}$ the link $\lk_{\Delta_{d-1}^{\langle r \rangle}}(v)$ is abstractly isomorphic to the barycentric subdivision of the boundary of a $(d-1)$-simplex.
\end{corollary}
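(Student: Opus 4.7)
The plan is to obtain the corollary directly by specializing Proposition \ref{lem:InnerFace} to the case where the face $F$ consists of a single vertex in the interior. So I would take $F = \{v\}$ and verify that this face satisfies the boundary hypothesis of the proposition; then applying the proposition with $\#F = 1$ yields the desired link description.

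To verify the hypothesis, I would argue as follows. Since $F = \{v\}$ is a single vertex, the geometric realization $|F|$ is a single point, and consequently $\partial |F| = \emptyset$. On the other hand, the assumption that $v$ lies in the interior of $\Delta_{d-1}^{\langle r\rangle}$ means exactly that $v \notin \partial |\Delta_{d-1}^{\langle r\rangle}|$, so $|F| \cap \partial |\Delta_{d-1}^{\langle r\rangle}| = \emptyset$. Hence the required equality
\[
  \partial |F| = |F| \cap \partial |\Delta_{d-1}^{\langle r\rangle}|
\]
holds trivially. Note that in terms of the coordinate description $v=(i_1,\dots,i_n)$, being interior is equivalent to $i_j>0$ for all $j\in[n]$ (equivalently $\supp(v)=[n]=[d]$).

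With the hypothesis verified, Proposition \ref{lem:InnerFace} applies and yields that $\lk_{\Delta_{d-1}^{\langle r\rangle}}(F)$ is abstractly isomorphic to the barycentric subdivision of the boundary of a $(d-\#F)$-simplex, that is, a $(d-1)$-simplex, which is the assertion of the corollary. There is no real obstacle here; the corollary is stated as a special case precisely because the hypothesis of the proposition simplifies to the interior condition when $F$ is a vertex, and no additional argument is required beyond observing $\partial|F| = \emptyset$.
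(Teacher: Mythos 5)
Your proof is correct and is exactly how the paper treats it: the corollary is stated as "a special case of Proposition \ref{lem:InnerFace}" with no further argument, and the only thing to observe is precisely that for a $0$-dimensional face $F=\{v\}$ one has $\partial|F|=\emptyset$ (which the paper also notes immediately before the corollary), so the hypothesis \eqref{eq:face} reduces to $v$ being an interior vertex.
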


Note that necessarily $r\geq d$ if there is an interior vertex of
$\Delta_{d-1}^{\langle r\rangle}$. This will be also crucial for \ref{cor:reg}.
Using \ref{lem:InnerVertex} and \ref{thm:Betti:bar} we get the following bounds for the non-vanishing of Betti numbers in
the edgewise subdivision. We use the constant $m_j$ defined by \ref{eq:mj}.

\begin{corollary}
\label{Prop:edgewise1}
  Let $r \geq d$. Then
  \begin{itemize}
    \item[(i)] For $1 \leq j \leq \frac{d}{2}$
          we have $\beta_{i,i+j}(\KK[\Delta_{d-1}^{\langle r \rangle}]) \neq 0$ for
          $j \leq i \leq 2^d-d-1-m_{d-1-j}$. 
              \item[(ii)] For $\frac{d}{2} < j \leq d-2$ we have
          $\beta_{i,i+j}(\KK[\Delta_{d-1}^{\langle r \rangle}]) \neq 0$ for
          $m_j \leq i \leq 2^d-2d+j$.
          \item[(iii)] For $j=d-1$, we have $\beta_{2^d-1-d,2^d-1-d+j}(\KK[\Delta_{d-1}^{\langle r\rangle}])\neq 0$.
  \end{itemize}
\end{corollary}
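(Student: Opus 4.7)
The plan is to extract the required non-vanishing from \ref{thm:Betti:bar} via Hochster's formula by realizing $\sd(\partial\Delta_{d-1})$ inside $\Delta_{d-1}^{\langle r\rangle}$ as the link of a single interior vertex.

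First I would fix an interior vertex $v$ of $\Delta_{d-1}^{\langle r\rangle}$; such a vertex exists because $r\geq d$ allows us to pick a lattice point in $\Omega_{r,d}$ with full support (for instance $v=\fe_1+\cdots+\fe_d+(r-d)\fe_1$). By \ref{lem:InnerVertex}, the link $L:=\lk_{\Delta_{d-1}^{\langle r\rangle}}(v)$ is abstractly isomorphic to $\sd(\partial\Delta_{d-1})$. Since $\Delta_{d-1}^{\langle r\rangle}$ is flag by \ref{lem:minGen}(i), $L$ coincides with the induced subcomplex of $\Delta_{d-1}^{\langle r\rangle}$ on $V(L)$: any $F\subseteq V(L)$ whose pairs are edges of $\Delta_{d-1}^{\langle r\rangle}$ forms, together with $v$, a clique in the $1$-skeleton, and hence by flagness a face of $\Delta_{d-1}^{\langle r\rangle}$, so $F\in L$.

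Next I would transfer \ref{thm:Betti:bar} from $\sd(\Delta_{d-1})$ to $\sd(\partial\Delta_{d-1})$. The complex $\sd(\Delta_{d-1})$ is the cone with apex the vertex $[d]$ over $\sd(\partial\Delta_{d-1})$, so any induced subcomplex containing that apex is contractible. Hochster's formula \ref{hochster} therefore forces every witness of $\beta_{i,i+j}(\KK[\sd(\Delta_{d-1})])\neq 0$ to be a subset $W\subseteq V(\sd(\partial\Delta_{d-1}))$ with $\widetilde{H}_{j-1}(\sd(\partial\Delta_{d-1})_W;\KK)\neq 0$ of cardinality $i+j$, available for each $i$ in the ranges listed in (i)--(iii) of \ref{thm:Betti:bar}.

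Finally, using the isomorphism $L\cong \sd(\partial\Delta_{d-1})$, each such $W$ transports to $W'\subseteq V(L)\subseteq V(\Delta_{d-1}^{\langle r\rangle})$ of the same cardinality, and inducedness of $L$ gives $(\Delta_{d-1}^{\langle r\rangle})_{W'}=L_{W'}$, so $\widetilde{H}_{j-1}((\Delta_{d-1}^{\langle r\rangle})_{W'};\KK)\neq 0$. Hochster's formula \ref{hochster} applied to $\Delta_{d-1}^{\langle r\rangle}$ then yields $\beta_{i,i+j}(\KK[\Delta_{d-1}^{\langle r\rangle}])\neq 0$ in the ranges claimed by (i)--(iii); in particular, (iii) for $j=d-1$ is the special case $i=2^d-d-1$ coming from \ref{thm:Betti:bar}(iii), where $W'$ exhausts $V(L)$ and $L_{W'}\cong \sd(\partial\Delta_{d-1})$ is a $(d-2)$-sphere. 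The conceptual heavy lifting has already been done by \ref{lem:InnerVertex}; the only subtle point to verify is that flagness upgrades $L$ to an induced subcomplex, after which the contribution from $L$ transfers verbatim to the ambient complex.
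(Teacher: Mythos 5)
Your proof is correct and takes essentially the same route the paper intends; the corollary is stated there without an explicit proof beyond the pointer to \ref{lem:InnerVertex} and \ref{thm:Betti:bar}, and you have carefully supplied the two small facts that make the transfer legitimate: (a) flagness of $\Delta_{d-1}^{\langle r\rangle}$ forces the link $L$ of an interior vertex to equal the induced subcomplex on its vertex set, so witnesses for Hochster's formula restricted to $V(L)$ see exactly $L$; and (b) since $\sd(\Delta_{d-1})$ is a cone with apex $[d]$ over $\sd(\partial\Delta_{d-1})$, every Hochster witness for a strand $j\geq 1$ avoids the apex, so \ref{thm:Betti:bar} really is a statement about induced subcomplexes of $\sd(\partial\Delta_{d-1})\cong L$. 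Both observations are standard but worth making explicit, and your handling of (iii), where the witness is all of $V(L)$, is also correct.
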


The next corollary shows that  \ref{Prop:edgewise1} covers all but possibly the last strand in the minimal free resolution of $\KK[\Delta_{d-1}^{\langle r\rangle}]$.

\begin{corollary}\label{cor:reg}
 Let $\Delta$ be a $(d-1)$-dimensional simplicial complex. Then
 $$ \reg (\KK[\Delta^{\langle r \rangle}])=
\begin{cases}
 d-1, &\mbox{ if } \widetilde{H}_{d-1}(\Delta;\KK)=0 \mbox{ and } r\geq d\\
d, &\mbox{ if } \widetilde{H}_{d-1}(\Delta;\KK)\neq 0.
\end{cases}
$$
Moreover, for $1\leq r\leq d-1$ one has $ \reg (\KK[\Delta^{\langle r \rangle}])\geq \max(\reg(\KK[\Delta]),r-1)$.
\end{corollary}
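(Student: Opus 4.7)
The plan is to apply Hochster's formula \ref{hochster}, which identifies $\reg(\KK[\Delta^{\langle r\rangle}])$ with the largest $j$ such that some induced subcomplex of $\Delta^{\langle r\rangle}$ has non-vanishing $(j-1)$-th reduced homology. Since $\dim\Delta^{\langle r\rangle}=d-1$, any induced subcomplex is at most $(d-1)$-dimensional, giving the universal upper bound $\reg(\KK[\Delta^{\langle r\rangle}])\le d$.

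In the case $\widetilde{H}_{d-1}(\Delta;\KK)\ne 0$, taking $W$ to be the full vertex set of $\Delta^{\langle r\rangle}$ and using $|\Delta^{\langle r\rangle}|=|\Delta|$, one has $\widetilde{H}_{d-1}(\Delta^{\langle r\rangle};\KK)\ne 0$, so Hochster's formula forces $\reg\ge d$, matching the upper bound.

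For the case $\widetilde{H}_{d-1}(\Delta;\KK)=0$ with $r\ge d$, I would show $\reg\le d-1$ by noting that in the $(d-1)$-dimensional complex $\Delta^{\langle r\rangle}$ there are no $d$-chains, so $(d-1)$-cycles coincide with $\widetilde{H}_{d-1}$; since $\widetilde{H}_{d-1}(\Delta^{\langle r\rangle};\KK)=\widetilde{H}_{d-1}(\Delta;\KK)=0$, no $(d-1)$-cycle exists in $\Delta^{\langle r\rangle}$ and hence none in any induced subcomplex. For the matching lower bound $\reg\ge d-1$, I would pick any $(d-1)$-face $F\in\Delta$; a direct check of conditions~(i) and~(ii) in the definition of the edgewise subdivision shows that $F^{\langle r\rangle}\cong\Delta_{d-1}^{\langle r\rangle}$ is an induced subcomplex of $\Delta^{\langle r\rangle}$, and then \ref{Prop:edgewise1}(iii) produces a non-zero Betti number in strand $d-1$ for $\KK[\Delta_{d-1}^{\langle r\rangle}]$, which lifts to $\KK[\Delta^{\langle r\rangle}]$ via Hochster.

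Finally, for the moreover part when $1\le r\le d-1$, I would establish the two lower bounds separately. For $\reg(\KK[\Delta^{\langle r\rangle}])\ge\reg(\KK[\Delta])$, I would use the identification $\Delta^{\langle r\rangle}_{W'}=(\Delta_W)^{\langle r\rangle}$, where $W'$ denotes the set of vertices of $\Delta^{\langle r\rangle}$ whose support lies in $W\subseteq V(\Delta)$; this requires a brief verification that condition~(ii) is intrinsic to $W$, which follows because $\ffi$ of a vector supported on $W$ is constant along coordinate positions outside $W$. Since $|(\Delta_W)^{\langle r\rangle}|=|\Delta_W|$, the reduced homologies agree, and any $W$ witnessing $\reg(\KK[\Delta])$ via Hochster produces a $W'$ witnessing the same strand for $\KK[\Delta^{\langle r\rangle}]$. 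For $\reg(\KK[\Delta^{\langle r\rangle}])\ge r-1$, since $r-1\le d-2<\dim\Delta$ the complex $\Delta$ has an $(r-1)$-face $F$ (with $\#F=r$), and $F^{\langle r\rangle}\cong\Delta_{r-1}^{\langle r\rangle}$ is an induced subcomplex of $\Delta^{\langle r\rangle}$ with a unique interior vertex $v=(1,\ldots,1)$. The induced subcomplex of $\Delta^{\langle r\rangle}$ on $V(F^{\langle r\rangle})\setminus\{v\}$ coincides with the topological deletion of $v$ from $|F^{\langle r\rangle}|=|F|$, namely an $(r-1)$-simplex with an open ball removed, and is hence homotopy equivalent to $S^{r-2}$; Hochster then produces a non-zero Betti number in strand $r-1$. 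The main technical obstacle throughout is the careful handling of induced subcomplexes with respect to condition~(ii) of the definition of edgewise subdivision, and in particular confirming that the vertex-deletion step really is an induced subcomplex operation on $\Delta^{\langle r\rangle}$, but this is ultimately a routine combinatorial verification.
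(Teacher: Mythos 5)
Your proof is correct and follows essentially the same strategy as the paper: Hochster's formula combined with the identification of induced subcomplexes carrying the right homology, all routed through the structural fact that links of interior faces of $\Delta_{d-1}^{\langle r\rangle}$ are barycentric subdivisions of boundaries of simplices. Two observations on where your route differs. For the bound $\reg(\KK[\Delta^{\langle r\rangle}])\geq d-1$ when $r\geq d$, you invoke \ref{Prop:edgewise1}(iii), whereas the paper goes directly to the link of an interior vertex via \ref{lem:InnerVertex}; these are equivalent since the former is derived from the latter. More substantively, for the bound $\reg(\KK[\Delta^{\langle r\rangle}])\geq r-1$ the paper asserts the equality $(\Delta^{\langle r\rangle})_{V_F}=(\partial F)^{\langle r\rangle}$ for an $(r-1)$-face $F$, where $V_F$ is the vertex set of $(\partial(2^F))^{\langle r\rangle}$. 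As written this is false for $r\geq 3$: the induced subcomplex on the boundary vertices of $F^{\langle r\rangle}$ contains, beyond the faces of $(\partial F)^{\langle r\rangle}$, the $r$ ``corner'' $(r-1)$-simplices of $F^{\langle r\rangle}$ (for $r=3$, for instance, $\{(3,0,0),(2,1,0),(2,0,1)\}$ has all vertices in $V_F$ and is a $2$-face, while $(\partial F)^{\langle 3\rangle}$ is a $9$-gon with no $2$-faces). Your argument — that the antistar of the unique interior vertex $v=(1,\ldots,1)$ is homotopy equivalent to $S^{r-2}$ as the ball $|F|$ with a small open neighborhood of $v$ removed — delivers the required non-vanishing of $\widetilde{H}_{r-2}$ directly, and is the clean way to do this step (it is also exactly the content of the paper's later \ref{lem:homotopy} applied to a vertex). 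The technical point you flag and should make explicit is that $F^{\langle r\rangle}$, and more generally $(\Delta_W)^{\langle r\rangle}$ for $W\subseteq[n]$, are induced subcomplexes of $\Delta^{\langle r\rangle}$; your remark that condition (ii) reduces to an intrinsic condition on $W$ because the partial sums defining $\ffi$ are constant on coordinate positions between consecutive elements of $W$ is the correct reason, and once stated carefully it closes the gap.
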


\begin{proof}
Let $r\geq d$ and let $v$ be an interior vertex of the $r$\textsuperscript{th} edgewise subdivision of a $(d-1)$-simplex $F\in \Delta$. Then $\lk_{\Delta^{\langle r \rangle}}(v)=\lk_{(2^F)^{\langle r \rangle}}(v)$ and it follows from \ref{lem:InnerVertex}, that $\lk_{\Delta^{\langle r \rangle}}(v)$ is abstractly isomorphic to the barycentric subdivision of the boundary of a $(d-1)$-simplex. Let $V$ be the vertex set of $\lk_{\Delta^{\langle r \rangle}}(v)$. Then $\lk_{\Delta^{\langle r \rangle}}(v)=(\Delta^{\langle r\rangle})_{V}$. Using Hochster's formula \ref{hochster} we conclude that $\reg(\KK[\Delta^{\langle r \rangle}])\geq d-1$. If $\widetilde{H}_{d-1}(\Delta;\KK)=0$, then  we also have $\widetilde{H}_{d-1}(\Delta^{\langle r\rangle};\KK)=0$ and moreover, $\widetilde{H}_{d-1}((\Delta^{\langle r\rangle})_W;\KK)=0$ for any subset $W$ of the vertices of $\Delta^{\langle r\rangle}$. Hence, Hochster's formula \ref{hochster} implies $\reg(\KK[\Delta^{\langle r \rangle}])\leq 
 d-1$, which shows the first part.

If $\widetilde{H}_{d-1}(\Delta;\KK)\neq 0$, then we also have $\widetilde{H}_{d-1}(\Delta^{\langle r\rangle};\KK)\neq 0$ and by an application of Hochster's formula \ref{hochster} we infer  $\reg(\KK[\Delta^{\langle r \rangle}])\geq d$. On the other hand, the regularity of $\KK[\Gamma]$ of any $(d-1)$-dimensional simplicial complex $\Gamma$ cannot exceed $d$ and the claim follows.

For the last part, we first show $\reg (\KK[\Delta^{\langle r \rangle}])\geq \reg(\KK[\Delta])$. Let $\reg(\KK[\Delta])=s$. By Hochster's formula \ref{hochster} there exists a subset $W$ of the vertex set of $\Delta$ such that $\widetilde{H}_{s-1}(\Delta_W;\KK)\neq 0$. This implies $\widetilde{H}_{s-1}((\Delta_W)^{\langle r\rangle};\KK)\neq 0$. Let $W_r$ be the vertex set of $(\Delta_W)^{\langle r\rangle}$. Since $(\Delta^{\langle r\rangle})_{W_r}$ is a deformation retract of $(\Delta_W)^{\langle r\rangle}$, it follows that $\widetilde{H}_{s-1}((\Delta^{\langle r\rangle})_{W_r};\KK)\neq 0$ and by Hochster's formula \ref{hochster} we conclude $\reg(\KK[\Delta^{\langle r\rangle}])\geq \reg (\KK[\Delta])$. To see the other inequality, consider an $(r-1)$-dimensional face $F$ of $\Delta$. Let $V_F$ be the vertex set of $(\partial(2^F))^{\langle r\rangle}$. Since $(2^F)^{\langle r\rangle}$ has an interior vertex, we infer $(\Delta^{\langle r\rangle})_{V_F}=(\partial(F))^{\langle r
 \rangle}$ and hence $\widetilde{H}_{r-2}((\Delta^{\langle r\rangle})_{V_F};\KK)\neq 0$. The claim follows from Hochster's formula \ref{hochster}.
\end{proof}

The above corollary in particular shows that the regularity can only increase under arbitrary edgewise subdivision.

The upper bounds for the non-vanishing of the Betti numbers in \ref{Prop:edgewise1} can be improved further.
Indeed, it can be shown that the strands in the Betti diagram of the $r$\textsuperscript{th} edgewise
subdivision of $\Delta_{d-1}^{\langle r \rangle}$ run up to the projective dimension. To provide these results we need to better understand the topology of edgewise subdivisions.

\begin{lemma}
  \label{lem:homotopy}
  Let $\Delta$ be a simplicial complex on ground set $\Omega$ such that 
  $|\Delta|$ is a regular triangulation
  of a $(d-1)$-ball and let $F \in \Delta$.
  If $\partial |F| = |F| \cap \partial |\Delta|$, then there are deformation retractions from $|\Delta|\setminus |F|$ to $|\Delta_{\Omega\setminus F}|$ to $|\lk_{\Delta}(F)|$.
\end{lemma}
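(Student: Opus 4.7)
The plan is to construct the two deformation retractions in turn. The first one, $|\Delta|\setminus|F|\to|\Delta_{\Omega\setminus F}|$, is purely simplicial and does not require the ball hypothesis. For each face $G\in\Delta$ with $G\not\subseteq F$, set $A=G\cap F$ and $B=G\setminus F$; then $B\neq\emptyset$ and $|G|=|A|\ast|B|$ (with the convention that the join with $|\emptyset|$ is $|B|$). If $A=\emptyset$ the retraction on $|G|$ is the identity; otherwise every point $p\in|G|\setminus|A|$ has a unique representation $p=(1-t)a+tb$ with $t\in(0,1]$, $a\in|A|$, $b\in|B|$, and I set $r_G(p)=b$ together with the straight-line homotopy $H_s(p)=(1-s)p+sb$. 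The segment from $p$ to $b$ lies in the convex simplex $|G|$ and meets $|F|\cap|G|=|A|$ only at $t=0$, which is excluded, so $H_s$ stays inside $|\Delta|\setminus|F|$. The decomposition $G'=(G'\cap F)\sqcup(G'\setminus F)$ restricts correctly from that of any $G\supseteq G'$, so the local maps $r_G$ glue to a continuous global deformation retraction onto $|\Delta_{\Omega\setminus F}|=\bigcup_{G\cap F=\emptyset}|G|$.

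For the second retraction, $|\Delta_{\Omega\setminus F}|\to|\lk_\Delta(F)|$, the geometric hypotheses enter essentially. The closed star of $F$, whose realization is $|F|\ast|\lk_\Delta(F)|$, is a regular neighborhood of $|F|$ in the PL ball $|\Delta|$; the condition $\partial|F|=|F|\cap\partial|\Delta|$ is precisely the statement that $|F|$ is a properly embedded sub-ball. By PL regular neighborhood theory, the closure of the complement of this neighborhood deformation retracts onto $|\lk_\Delta(F)|$ via the standard push-off along the join structure of the neighborhood, and since $|\Delta_{\Omega\setminus F}|$ sits inside that closed complement and already contains $|\lk_\Delta(F)|$, restricting this deformation gives the retraction we want. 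A more combinatorial route is to build an acyclic matching on the face poset of $\Delta_{\Omega\setminus F}\setminus\lk_\Delta(F)$: a face $G$ in this set satisfies $G\cup F\notin\Delta$, and one can pair $G$ with $G\,\triangle\,\{v\}$ for a vertex $v$ chosen from a minimal $H\subseteq G$ with $H\cup F\notin\Delta$ via a fixed linear order on $\Omega$; the ball hypothesis ensures acyclicity, and the induced discrete Morse collapse then produces the retraction.

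The first retraction is a routine join projection; the hard part will be the second one, where both the ball hypothesis and the properly-embedded-face condition on $F$ are crucial. Dropping either assumption can invalidate the statement: if $\Delta$ is a pair of disjoint edges and $F$ is one of their vertices, then $|\lk_\Delta(F)|$ is a single point while $|\Delta_{\Omega\setminus F}|$ has two connected components, ruling out even a homotopy equivalence. The real content is therefore to use the ball structure to produce a continuous retraction that stays inside $|\Delta_{\Omega\setminus F}|$ and fixes $|\lk_\Delta(F)|$.
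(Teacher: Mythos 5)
Your first retraction is essentially the proof of the lemma the paper cites at this step (Munkres, \emph{Elements of Algebraic Topology}, Lemma~70.1; equivalently \cite[Lem.\ 4.7.27]{BLSWZ}), and your observation that it needs no hypothesis on the pair $(\Delta,F)$ is correct.

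The gap is in the second retraction, and it is a real one. The set-theoretic claim you lean on---that $|\Delta_{\Omega\setminus F}|$ lies inside the closure of $|\Delta|\setminus|\st_\Delta(F)|$---is false: take $\Delta=\sd(\Delta_{d-1})$ and $F$ the barycenter vertex (a legal choice, since $\partial|F|=\emptyset=|F|\cap\partial|\Delta|$); then $|\st_\Delta(F)|=|\Delta|$, so the closure of the complement is empty, whereas $|\Delta_{\Omega\setminus F}|$ is the whole subdivided boundary sphere $|\sd(\partial\Delta_{d-1})|$. What you presumably want instead is the realization of the deletion $\{G\in\Delta : F\not\subseteq G\}$, i.e.\ $|\Delta|$ minus the open star; this does contain $|\Delta_{\Omega\setminus F}|$, but then a second problem appears: a deformation retraction of that larger deletion onto $|\lk_\Delta(F)|$ does not automatically \emph{restrict} to one of the intermediate subspace $|\Delta_{\Omega\setminus F}|$---you would need the homotopy to preserve $|\Delta_{\Omega\setminus F}|$ for all times, and neither the phrase ``standard push-off along the join structure'' nor the unproved assertion ``the ball hypothesis ensures acyclicity'' in your discrete-Morse alternative supplies that. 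The paper sidesteps both problems by a concrete construction that your sketch does not reach: regularity makes $|\st_\Delta(F)|$ a \emph{convex} polytope, so one can fix $p$ in the relative interior of $|F|$ and radially project each $q\in|\Delta_{\Omega\setminus F}|$ from $p$ to the unique point where $[p,q]$ meets $\partial|\st_\Delta(F)|$. This is a deformation retraction of $|\Delta_{\Omega\setminus F}|$ onto $\partial|\st_\Delta(F)|\setminus|F|$, and then a \emph{second} application of the same Munkres lemma, now to the simplicial complex $\Gamma$ triangulating the sphere $\partial|\st_\Delta(F)|$, retracts $|\Gamma|\setminus|F|$ onto $|\Gamma_{\Omega\setminus F}|=|\lk_\Delta(F)|$. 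That radial argument is the missing content in your proposal.
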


\begin{proof}
  By \cite[Lem. 4.7.27]{BLSWZ} or \cite[Lem. 70.1]{Munkres} $|\Delta_{\Omega\setminus F}|$ is a deformation
  retract of $|\Delta|\setminus |F|$.
  Since $|\Delta|$ is a regular triangulation, we know that $|\st_{\Delta}(F)|$ is convex. Let $p$ be an
  interior point of $|F|$. Consider the map that sends a point $q\in |\Delta_{\Omega\setminus F}|$ to
  the intersection of $\partial|\st_{\Delta}(F)|$ and the line segment through $p$ and $q$. Note that this
  intersection is well-defined since $|\st_{\Delta}(F)|$ is convex. The image of this map is
  $|\st_{\Delta}(F)|\setminus |F|$. Thus, the map defines a deformation retract between
  $|\Delta_{\Omega \setminus F}|$ and $\partial|\st_{\Delta}(F)|\setminus |F|$. Let $\Gamma$ be the
  simplicial complex whose geometric realization is $\partial|\st_{\Delta}(F)|$. Another application
  of \cite[Lem. 4.7.27]{BLSWZ} or \cite[Lem. 70.1]{Munkres} shows that
  $|\Gamma_{\Omega\setminus F}|=|\lk_{\Delta}(F)|$ is a deformation retract of $|\Gamma|\setminus |F|$.
\end{proof}

Note that by definition $\partial |F|=\emptyset$ if $F$ is  a $0$-dimensional face.

\begin{lemma}
  \label{lem:nonvanishing}
  Let $\Delta$ be a simplicial complex on ground set $\Omega$ such that $|\Delta|$ is a regular triangulation
  of a $(d-1)$-ball and let $F \in \Delta$ such that $\partial|F|\subseteq \partial|\Delta|$. Let
  $B\subseteq \Omega\setminus F$ with $\lk_{\Delta}(F)\subseteq 2^B$. Then
  $\widetilde{H}_{d-1-\# F}(|\Delta_B|;\KK)\neq 0$.
\end{lemma}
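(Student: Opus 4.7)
The plan is to sandwich $\Delta_B$ between the link $\lk_\Delta(F)$ and the deletion $\Delta_{\Omega\setminus F}$, and then exploit the homotopy equivalence supplied by \ref{lem:homotopy}. First I would verify the chain of subcomplex inclusions
\begin{equation*}
\lk_\Delta(F)\ \subseteq\ \Delta_B\ \subseteq\ \Delta_{\Omega\setminus F}.
\end{equation*}
The second inclusion is immediate from $B\subseteq \Omega\setminus F$, while for the first it suffices to note that every $G\in \lk_\Delta(F)$ is a face of $\Delta$ which, by the hypothesis $\lk_\Delta(F)\subseteq 2^B$, satisfies $G\subseteq B$, and hence $G\in \Delta_B$.

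By \ref{lem:homotopy} the composite inclusion $|\lk_\Delta(F)|\hookrightarrow |\Delta_{\Omega\setminus F}|$ is a homotopy equivalence, so functoriality of singular homology produces a factorization
\begin{equation*}
\widetilde{H}_{d-1-\#F}(|\lk_\Delta(F)|;\KK)\ \stackrel{\alpha}{\longrightarrow}\ \widetilde{H}_{d-1-\#F}(|\Delta_B|;\KK)\ \stackrel{\beta}{\longrightarrow}\ \widetilde{H}_{d-1-\#F}(|\Delta_{\Omega\setminus F}|;\KK)
\end{equation*}
in which $\beta\circ\alpha$ is an isomorphism. In particular $\alpha$ is injective, so it is enough to show that the leftmost group is non-zero. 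Since $|\Delta|$ is a regular triangulation of a $(d-1)$-ball and the hypothesis $\partial|F|\subseteq \partial|\Delta|$ forces the relative interior of $|F|$ to lie in the interior of $|\Delta|$, the link $|\lk_\Delta(F)|$ is a PL $(d-1-\#F)$-sphere, whose top reduced homology over $\KK$ is one-dimensional; this gives the required non-vanishing.

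The main obstacle is this last topological identification. If one prefers to avoid directly invoking PL-manifold theory, one can argue instead that $|\Delta|$ is contractible and that the long exact sequence of the pair $(|\Delta|,|\Delta|\setminus|F|)$, combined with excision on a regular neighborhood of $|F|$ (which by interiority is locally modeled on $(D^{d-1},D^{d-1}\setminus D^{\#F-1})$), computes $\widetilde{H}_{d-1-\#F}(|\Delta|\setminus|F|;\KK)\cong \KK$; a second application of \ref{lem:homotopy}, which exhibits $|\lk_\Delta(F)|$ as a deformation retract of $|\Delta|\setminus|F|$, transfers this isomorphism to the link and completes the proof.
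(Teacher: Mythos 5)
Your proof is essentially the paper's argument, recast in a slightly more functorial way: both proofs sandwich $\Delta_B$ between $\lk_\Delta(F)$ and $\Delta_{\Omega\setminus F}$, invoke Lemma \ref{lem:homotopy} to see that the outer inclusion is a homotopy equivalence, and conclude that the non-trivial top homology of the link cannot die in $\Delta_B$. Where the paper chases an explicit cycle $\sigma$ and uses the auxiliary identity $\Delta_A=\lk_\Delta(F)$ for the vertex set $A$ of the link, you observe directly that $\beta\circ\alpha$ is an isomorphism, hence $\alpha$ is injective; this is cleaner and makes the identity $\Delta_A=\lk_\Delta(F)$ unnecessary, since all you need is $\lk_\Delta(F)\subseteq\Delta_B$, which follows immediately from $\lk_\Delta(F)\subseteq 2^B$.

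One point to flag. You write that the hypothesis $\partial|F|\subseteq\partial|\Delta|$ ``forces the relative interior of $|F|$ to lie in the interior of $|\Delta|$''. As a logical deduction this is false: a face $F$ lying entirely in $\partial|\Delta|$ satisfies $\partial|F|\subseteq|F|\subseteq\partial|\Delta|$, yet its relative interior is on the boundary; in that situation $\lk_\Delta(F)$ is a $(d-1-\#F)$-ball rather than a sphere, its top reduced homology vanishes, and the conclusion of the lemma fails. The condition actually needed — and the one required to invoke Lemma \ref{lem:homotopy} at all, and the one produced by Lemma \ref{lem:face} where this lemma is applied — is the stronger $\partial|F|=|F|\cap\partial|\Delta|$. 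This looks like a slip in the statement of the lemma itself rather than a flaw in your reasoning, but as written the inference in your proof does not go through, so you should either use the equality hypothesis explicitly or argue separately that the interiority must hold.
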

\begin{proof}
  Let $A$ be the vertex set of $\lk_\Delta(F)$. Since $\Delta$ is a regular triangulation of a ball
  $|\st_\Delta(F)|$ is convex and hence the points from $A$ are in convex position. For a face $G$
  of $\Delta_A$ we have $G \in \st_\Delta(F) = 2^F * \lk_\Delta(F)$. Hence $G \in \lk_\Delta(F)$ and
  $\Delta_A = \lk_\Delta(F)$. Thus in this case the assertion follows from the fact that the assumptions
  imply  $\widetilde{H}_{d-1-\# F}(\lk_\Delta(F);\KK)= \KK$.
  We know from \ref{lem:homotopy} that $|\Delta_A| = |\lk_\Delta(F)|$ is a deformation retract of
  $|\Delta_{\Omega \setminus F}|$. Thus the inclusion
  $\lk_\Delta(F) \hookrightarrow \Delta_{\Omega \setminus F}$ induces a map in homology that
  sends the generator of $\widetilde{H}_{d-1-\# F}(\lk_\Delta(F);\KK)= \KK$ identically to the
  generator of $\widetilde{H}_{d-1-\# F}(\Delta_{\Omega \setminus F};\KK)= \KK$. In particular, if we
  choose a $(d-1 - \# F)$-cycle $\sigma$ representing the homology class
  $\widetilde{H}_{d-1-\# F}(\lk_\Delta(F);\KK)= \KK$, then $\sigma$ also represents the homology class
  $\widetilde{H}_{d-1-\# F}(\Delta_{\Omega \setminus F};\KK)= \KK$. In particular, $\sigma$ is not a
  boundary in $\Delta_{\Omega \setminus F}$. Let $A \subseteq B \subseteq \Omega \setminus F$. Then
  $B$ supports $\sigma$ and by $\Delta_B \subseteq \Delta_{\Omega \setminus F}$ it cannot be a boundary
  in $\Delta_B$. In particular, we have $\widetilde{H}_{d-1-\# F}(|\Delta_B|;\KK)\neq 0$.
\end{proof}

We can now use the previous two lemmas to derive the main result of this section. In order to apply these
lemmas we need to construct faces $F\in \Delta_{d-1}^{\langle r \rangle}$ satisfying the property that
$\partial |F| =|F|\cap  \partial |\Delta|$.

\begin{lemma}\label{lem:face}
  Let $r\geq d$. For $0\leq j\leq d-2$, let $v^{(j)}\in \Omega_{r-1,d-j-1}$ such that all coordinates
  are greater than $0$. Let
  \begin{equation*}
     F_j:= \{(\underbrace{1,0,\ldots,0}_{j-1},v^{(j)}),(\underbrace{0,1,0,\ldots,0}_{j-1},v^{(j)}),\ldots,
             (\underbrace{0,\ldots,0,1}_{j-1},v^{(j)})\}.
  \end{equation*}
  Then $F_j\in \Delta_{d-1}^{\langle r\rangle}$ and
  $\partial|F_j|=|F_j|\cap\partial|\Delta_{d-1}^{\langle r\rangle}|$.
\end{lemma}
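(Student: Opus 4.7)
The plan is to establish the lemma in two parts: first verify that $F_j \in \Delta_{d-1}^{\langle r\rangle}$ by checking the two defining conditions of the edgewise subdivision, and then identify $\partial |F_j|$ with $|F_j| \cap \partial|\Delta_{d-1}^{\langle r\rangle}|$ by a coordinate-level analysis of convex combinations.

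For membership, I will verify conditions (i) and (ii) of the definition of $\Delta^{\langle r\rangle}$. Condition (i) will follow immediately from the positivity of the coordinates of $v^{(j)}$: the union of the supports of the vertices of $F_j$ will cover both the first block (via the unit-vector parts) and the tail block (via $\supp(v^{(j)})$), so it equals the full vertex set $[d]$ of $\Delta_{d-1}$, which is a face. For condition (ii), I will exploit that any two vertices of $F_j$ agree on the block where $v^{(j)}$ sits, so their difference is supported on the first block and has the form $\fe_k - \fe_{k'}$. Applying $\ffi$ gives $\fu_k - \fu_{k'}$, and for $k < k'$ this evaluates telescopically to $\fe_k + \fe_{k+1} + \cdots + \fe_{k'-1}$, which lies in $\{0,1\}^d$ as required.

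For the boundary identity, I will use that $|\Delta_{d-1}^{\langle r\rangle}| = |\Delta_{d-1}|$, so $\partial|\Delta_{d-1}^{\langle r\rangle}| = \partial|\Delta_{d-1}|$ is the set of points whose support is a proper subset of $[d]$. A generic point of $|F_j|$ can be written as a convex combination $x = \sum_k \lambda_k (\fe_k, v^{(j)})$ with $\lambda_k \geq 0$ and $\sum_k \lambda_k = 1$, and its support as a vector in $\mathbb{R}^d$ is $\{k : \lambda_k > 0\} \cup \supp(v^{(j)})$. Positivity of $v^{(j)}$ then forces $\supp(x) = [d]$ precisely when every $\lambda_k > 0$, i.e., precisely when $x$ lies in the relative interior of $|F_j|$. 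Hence $x \in \partial|\Delta_{d-1}|$ iff some $\lambda_k$ vanishes iff $x \in \partial|F_j|$, which gives the desired equality.

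I anticipate that the only nontrivial step will be the coordinate-level verification of condition (ii), as it requires carefully unpacking the operators $\ffi$ and $\fu_k$ and exploiting the cancellation of the shared tail $v^{(j)}$; once that is in hand, the boundary identity reduces to a clean bookkeeping argument on supports of convex combinations, using only the positivity of $v^{(j)}$.
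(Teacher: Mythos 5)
Your proposal is correct and follows essentially the same line of reasoning as the paper's proof: membership is checked against the two defining conditions of the edgewise subdivision, the boundary of $|F_j|$ is identified with points lying on coordinate hyperplanes (hence on $\partial|\Delta_{d-1}|$), and the positivity of $v^{(j)}$ forces interior points of $|F_j|$ to have full support and hence to be interior in $|\Delta_{d-1}|$. You are somewhat more careful than the paper: the paper simply asserts $F_j\in\Delta_{d-1}^{\langle r\rangle}$ without verifying the telescoping $\ffi(\fe_k-\fe_{k'})=\fe_k+\cdots+\fe_{k'-1}\in\{0,1\}^d$, whereas you spell it out, and your convex-combination/support bookkeeping sidesteps a small indexing slip in the paper's description of which coordinate vanishes on a proper face of $F_j$.
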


\begin{proof}
  Since $v^{(j)}\in \Omega_{r-1,d-j-1}$ it follows that $F_j\in  \Delta_{d-1}^{\langle r\rangle}$.
  Moreover, for any $j$-tuple of vertices there exists $1\leq k\leq d-j-1$ such that their
  $k$\textsuperscript{th} coordinates equal $0$. Hence they lie on the hyperplane $x_k=0$ and
  therefore on a facet of $\Delta_{d-1}^{\langle r\rangle}$. This shows $\partial|F_j|\subseteq \partial |\Delta|$.
  Moreover any point in the interior of $|F_j|$ lies in the interior of
  $|\Delta_{d-1}^{(r)}|$ since $v^{(j)}$ has only non-zero coordinates.
\end{proof}

Now we state the main result of the section which improve on the bounds provided in \ref{Prop:edgewise1}.

\begin{theorem}\label{edgewise:Betti}
 Let $r \geq d$. Then
    \begin{itemize}
    \item[(i)] If $1\leq j\leq \frac{d}{2}$, then
      $$
        \beta_{i,i+j}(\KK[\Delta_{d-1}^{\langle r \rangle}])
        \begin{cases}
                 =0 \mbox{ for } 0\leq i\leq j-1,\\
             \neq 0 \mbox{ for } j \leq i\leq \pdim(\KK[\Delta_{d-1}^{\langle r \rangle}]).
        \end{cases}$$
   
    \item[(ii)] If $\frac{d}{2}<j\leq d-2$, then
      $$\beta_{i,i+j}(\KK[\Delta_{d-1}^{\langle r \rangle}]) 
	\begin{cases}
	=0 \mbox{ for } 0\leq i\leq j-1,\\
	\neq 0 \mbox{ for }  m_j \leq j \leq \pdim (\KK[\Delta_{d-1}^{\langle r \rangle}]).
      \end{cases}$$
    
    \item[(iii)] For $j=d-1$  we have 
    $$\beta_{i,i+j}(\KK[\Delta_{d-1}^{\langle r \rangle}])\neq 0 \mbox{  for }  2^d-1-d\leq i\leq \pdim (\KK[\Delta_{d-1}^{\langle r\rangle}]).$$

  \end{itemize}

\end{theorem}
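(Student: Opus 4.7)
The plan is to combine two complementary arguments: a flag-ness argument handles the vanishing, while the non-vanishing range $[j,\pdim]$ (respectively $[m_j,\pdim]$) is covered by patching the already established lower interval from \ref{Prop:edgewise1} with a new upper interval obtained via interior faces of large cardinality. For the vanishing statement $\beta_{i,i+j}(\KK[\Delta_{d-1}^{\langle r\rangle}]) = 0$ on $0 \leq i \leq j-1$ in (i) and (ii), I would invoke \ref{lem:minGen}(i) to conclude that $\Delta_{d-1}^{\langle r\rangle}$ is flag, hence so is every induced subcomplex. Then \ref{lowerbound} shows that any nontrivial $(j-1)$-cycle has support on at least $2j$ vertices, and Hochster's formula \ref{hochster} forces $\beta_{i,i+j} = 0$ as soon as $i+j \leq 2j-1$.

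For the non-vanishing in the upper part of the range, the key tool is \ref{lem:face}: for each $j \in \{1,\ldots,d-1\}$ it produces a face $F \in \Delta_{d-1}^{\langle r\rangle}$ with $\#F = d-j$ satisfying $\partial|F| = |F| \cap \partial|\Delta_{d-1}^{\langle r\rangle}|$. Since $r \geq d$, by \ref{lem:InnerFace} the link $\lk_{\Delta_{d-1}^{\langle r\rangle}}(F)$ is abstractly isomorphic to the barycentric subdivision $\sd(\partial \Delta_j)$ of the boundary of a $j$-simplex, which is a triangulated $(j-1)$-sphere on exactly $2^{j+1}-2$ vertices. Letting $A$ denote the vertex set of this link, \ref{lem:nonvanishing} yields $\widetilde{H}_{j-1}((\Delta_{d-1}^{\langle r\rangle})|_B;\KK) \neq 0$ for every $B$ with $A \subseteq B \subseteq \Omega \setminus F$. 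Since $\Delta_{d-1}^{\langle r\rangle}$ is Cohen-Macaulay of dimension $d-1$ (its geometric realization is a triangulated ball), $\pdim(\KK[\Delta_{d-1}^{\langle r\rangle}]) = |\Omega| - d$, and Hochster's formula \ref{hochster} converts the above into $\beta_{i,i+j}(\KK[\Delta_{d-1}^{\langle r\rangle}]) \neq 0$ for every $i$ in the interval $[2^{j+1}-j-2,\, \pdim(\KK[\Delta_{d-1}^{\langle r\rangle}])]$. Specializing to $j = d-1$ (so $F$ is a single interior vertex, cf.~\ref{lem:InnerVertex}) yields exactly the interval $[2^d-d-1,\pdim]$ claimed in (iii).

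The remaining low values of $i$ in cases (i) and (ii) are supplied by \ref{Prop:edgewise1}(i) and (ii), which already deliver non-vanishing on $[j,\, 2^d-d-1-m_{d-j-1}]$ and $[m_j,\, 2^d-2d+j]$ respectively. The final step is to verify that each such interval meets the new upper interval $[2^{j+1}-j-2,\, \pdim]$, so that their union is the full $[j,\pdim]$ (resp.~$[m_j,\pdim]$) claimed by the theorem. This numerical overlap check is the main technical obstacle: concretely, one has to show $2^{j+1}-j-2 \leq 2^d-d-m_{d-j-1}$ in case (i) and $2^{j+1}-j-2 \leq 2^d-2d+j+1$ in case (ii). Both of these are handled by a short case analysis driven by the piecewise description of $m_\bullet$ in \ref{defmj}, distinguishing whether $d-j-1 \leq d/2$ (respectively $j \leq d/2$) or not; in each subcase the relevant inequality reduces to a direct comparison of exponential with polynomial quantities in $d$.
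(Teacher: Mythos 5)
Your proof is correct and follows essentially the same route as the paper: flag-ness of $\Delta_{d-1}^{\langle r\rangle}$ combined with \ref{lowerbound} and Hochster's formula for the vanishing; and for the non-vanishing, \ref{Prop:edgewise1} for the lower portion of the strand spliced with an interior-face construction (via \ref{lem:face}, \ref{lem:InnerFace}, \ref{lem:nonvanishing} and Hochster) covering $[2^{j+1}-j-2,\pdim]$, closed off by the numerical overlap check. Your choice of an interior face of cardinality $d-j$ (i.e.\ $F_{d-j-1}$ in the notation of \ref{lem:face}) in fact makes the index bookkeeping cleaner than the paper's own write-up, which computes a link of size $2^{d-j}-2$ but reports its conclusion for the $j$\textsuperscript{th} strand.
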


\begin{proof}
The statements concerning the vanishing of Betti numbers in (i) and (ii) follow from the same arguments as the corresponding statements in the proof of \ref{le:barysimplex} since $\Delta_{d-1}^{\langle r\rangle}$ is a flag complex. 

(i) By \ref{Prop:edgewise1} it suffices to show that $\beta_{i,i+j}(\KK[\Delta_{d-1}^{\langle r \rangle}]) \neq 0$ for 
  $2^d-d-1-m_{d-1-j}<i\leq \pdim(\KK[\Delta_{d-1}^{\langle r \rangle}])=\#\Omega_{r,d}-d$. Let $F_j$ as in \ref{lem:face} 
  and let $A_j$ be the vertex set of $\lk_{\Delta_{d-1}(r)}(F_j)$. It follows from \ref{lem:nonvanishing} that 
  $\widetilde{H}_{d-1-j-1}(\Delta_{d-1}^{\langle r\rangle})_{B_j};\KK)\neq 0$ for 
  $A_j\subseteq B_j\subseteq \Omega_{r,d}\setminus F_j$. By Hochster's formula \ref{hochster} we conclude 
  $\beta_{i,i+d-j-1}(\KK[\Delta_{d-1}^{\langle r \rangle}]) \neq 0$ for 
  $\#A_j-(d-j-1)\leq i\leq \#\Omega_{r,d}-(j+1)-(d-j-1)=\pdim (\KK[\Delta_{d-1}^{\langle r \rangle}])$.
  By \ref{lem:InnerFace} we know that  $\lk_{\Delta_{d-1}(r)}(F_j)$ is abstractly isomorphic to the boundary of the 
  barycentric subdivision of a $(d-1-j)$-simplex. Hence $\#A_j=2^{d-j}-2$. It remains to show that 
  $2^{d-j}+j\leq 2^d-m_{d-1-j}$ or equivalently $m_{d-1-j}\leq 2^d-2^{d-j}-j$. From \ref{eq:BettiFirstPart} we can 
  infer that $m_{d-1-j}\leq 2^{d-j}+j-d-1$ if $1\leq j\leq \frac{d}{2}$. Since it can be easily verified that $2^{d-j}+j-d-1\leq 2^d-2^{d-j}-j$ for $1\leq j\leq \frac{d}{2}$, the claim follows.
  The proofs of (ii) and (iii) use a similar reasoning.
\end{proof}

\subsection{Asymptotic behavior of Betti numbers for edgewise subdivisions}

The focus in this section lies on the study of Betti numbers of edgewise subdivisions of arbitrary
simplicial complexes, i. e., we do not restrict our attention to simplices anymore.
Similar as we did for iterated barycentric subdivisions, as $r$ grows, we want to determine the relative
amount of non-zero Betti numbers $\beta_{i,i+j}(\KK[\Delta^{\langle r \rangle}])$, for $j$ fixed, compared to the
projective dimension of $\KK[\Delta^{\langle r \rangle}]$. More precisely, given $1\leq j\leq d-1$, we study the quantity

\begin{equation*}
  \frac{\# \{i~:~\beta_{i,i+j}(\KK[\Delta^{\langle r \rangle}])\neq 0\}}{\pdim( \KK[\Delta^{\langle r \rangle}])}.
\end{equation*}

Our main result in this section, paralleling \ref{thm:asymptoticsBary}, is the following:

\begin{theorem}
\label{thm:asymptoticsEdge}
  Let $d-1\geq 1$ and let $\Delta$ be a $(d-1)$-dimensional simplicial complex and let $r\geq 2d$ be a positive integer.
  Then, $\beta_{i,i+j}(\KK[\Delta^{\langle r \rangle}])\neq 0$ in the following cases:
  \begin{itemize}
    \item[(i)] $1\leq j\leq \frac{d}{2}$ and $j\leq i\leq \binom{2d-1}{d-1}-d+\pdim (\KK[\Delta^{\langle r \rangle}])+\depth (\KK[\Delta])-\binom{3d-1}{d-1}.$
    \item[(ii)] $\frac{d}{2}\leq j\leq d-2$ and $m_j\leq i\leq \binom{2d-1}{d-1}-d+\pdim (\KK[\Delta^{\langle r \rangle}])+\depth (\KK[\Delta])-\binom{3d-1}{d-1}.$
    \item[(iii)] $j=d-1$ and $2^d-d-1\leq i\leq  \binom{2d-1}{d-1}-d+\pdim (\KK[\Delta^{\langle r \rangle}])+\depth (\KK[\Delta])-\binom{3d-1}{d-1}$.
  \end{itemize}
\end{theorem}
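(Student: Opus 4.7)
Proof plan. The approach parallels that of \ref{thm:asymptoticsBary}: I would locate inside $\Delta^{\langle r\rangle}$ an induced subcomplex $K$ simplicially isomorphic to $\Delta_{d-1}^{\langle d\rangle}$, surrounded by a buffer set $D$ of vertices, so that no vertex of $\Delta^{\langle r\rangle}$ outside $V(K)\cup D$ is joined by an edge to any vertex of $V(K)$. Once such a pair $(K,D)$ is in place, the non-vanishing of $\beta_{i,i+j}(\KK[K])$ furnished by \ref{edgewise:Betti} is transferred to $\KK[\Delta^{\langle r\rangle}]$ via Hochster's formula.

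First I would pick a $(d-1)$-face $H \in \Delta$ and work inside the induced subcomplex $(2^H)^{\langle r\rangle} \subseteq \Delta^{\langle r\rangle}$, which is a regular triangulation of the $(d-1)$-ball $|H|$. Using the hypothesis $r \geq 2d$, I would exhibit a sub-triangulation $\Sigma$ of $(2^H)^{\langle r\rangle}$ combinatorially isomorphic to $(2^H)^{\langle 2d\rangle}$ sitting deep in the interior of $|H|$. The interior vertices of $\Sigma$, namely those with full support in $H$ (bijecting with $\Omega_{d,d}$ under the coordinate shift $(i_1,\ldots,i_d) \mapsto (i_1-1,\ldots,i_d-1)$), number $\binom{2d-1}{d-1}$, and on them the induced subcomplex $K$ is simplicially isomorphic to $\Delta_{d-1}^{\langle d\rangle}$; the remaining $\binom{3d-1}{d-1} - \binom{2d-1}{d-1}$ vertices of $\Sigma$ form the buffer $D$.

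The main obstacle is the isolation claim, namely that every vertex of $\Delta^{\langle r\rangle}$ outside $V(K) \cup D$ is non-adjacent in $\Delta^{\langle r\rangle}$ to every vertex of $V(K)$. In contrast to the barycentric case, edgewise adjacency is governed by the $\ffi$-condition and does not localize as cleanly: one must verify both that the $\Omega_{r,d}$-vertices of $(2^H)^{\langle r\rangle}$ lying outside $V(\Sigma)$ do not cross into $K$, and that vertices of $\Delta^{\langle r\rangle}$ whose support is not contained in $H$ (which come from other faces of $\Delta$ meeting $H$) are also blocked by $D$. The hypothesis $r \geq 2d$ is exactly what allows the deep-interior placement making the buffer of size only $\binom{3d-1}{d-1} - \binom{2d-1}{d-1}$ sufficient.

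Given the isolation, the proof concludes by a book-keeping argument. Setting $V_\mathrm{free} := V(\Delta^{\langle r\rangle}) \setminus (V(K) \cup D)$, for any $W \subseteq V(K)$ and $S \subseteq V_\mathrm{free}$ the restriction $(\Delta^{\langle r\rangle})_{W\cup S}$ decomposes as the disjoint topological union of $K_W$ and $(\Delta^{\langle r\rangle})_S$, so $\widetilde{H}_{j-1}((\Delta^{\langle r\rangle})_{W\cup S};\KK)$ contains $\widetilde{H}_{j-1}(K_W;\KK)$ as a direct summand. Picking $W$ via \ref{edgewise:Betti} to witness the non-vanishing of $\beta_{i,i+j}(\KK[K])$ for each $i$ in the intervals starting at $j$, $m_j$, or $2^d-d-1$ (corresponding to (i), (ii), (iii), respectively) and ending at $\pdim(\KK[K]) = \binom{2d-1}{d-1}-d$, and then letting $|S|$ vary over $\{0,\ldots,|V_\mathrm{free}|\}$, Hochster's formula \ref{hochster} for $\KK[\Delta^{\langle r\rangle}]$ yields non-vanishing up to index $\pdim(\KK[K]) + |V_\mathrm{free}| = \binom{2d-1}{d-1} - d + n - \binom{3d-1}{d-1}$, which, upon substituting $n = \pdim(\KK[\Delta^{\langle r\rangle}]) + \depth(\KK[\Delta^{\langle r\rangle}])$ and invoking the invariance of depth under edgewise subdivision (Table \ref{How}), recovers exactly the upper bound stated in (i)--(iii).
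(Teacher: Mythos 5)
Your proposal follows the paper's proof almost exactly: locate a copy $\Gamma$ of $\Delta_{d-1}^{\langle d\rangle}$ as the interior vertices of a subcomplex $\widetilde{\Delta}\cong\Delta_{d-1}^{\langle 2d\rangle}$ inside $(2^F)^{\langle r\rangle}$ for a top-dimensional face $F$, take the remaining vertices of $\widetilde{\Delta}$ as a buffer, apply \ref{edgewise:Betti} to $\Gamma$ and push the non-vanishing through Hochster's formula by adjoining vertices outside $\widetilde{\Delta}$, then account via $\pdim+\depth=\#V$ and depth-invariance. The isolation claim you flag is genuine but is easily discharged by two observations the paper leaves implicit: interior vertices of $\widetilde{\Delta}$ have full support $F$, and since $F$ is maximal they cannot be joined to any vertex with support $\not\subseteq F$; within $\Omega_{r,d}$, the condition $\ffi(a-\tilde a)\in\{0,1\}^n$ forces $|a_k-\tilde a_k|\leq 1$ for every $k$, so one coordinate layer of buffer is enough — "deep in the interior of $|H|$" is neither needed nor (for $r=2d$) even possible.
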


The proof of this theorem follows a similar strategy as the proof of \ref{thm:asymptoticsBary}.
The main idea is to isolate a subcomplex $\Gamma$, that is isomorphic to the
$d$\textsuperscript{th} edgewise subdivision of a $(d-1)$-simplex from the rest of the complex
$\Delta^{\langle r \rangle}$. Then we apply \ref{edgewise:Betti} to $\Gamma$. Eventually, one uses that any
homologically non-trivial induced subcomplex of $\Gamma^{\langle r \rangle}$ give rise to homologically non-trivial
induced subcomplexes of $\Delta^{\langle r \rangle}$ when adding vertices not connected to $\Gamma^{\langle r\rangle}$. We make this
more precise in the following proof.

\begin{proof}
  Since $\Delta$ is of dimension $d-1$, we can choose a $(d-1)$-dimensional face $F$ of $\Delta$.
  If $r\geq 2d$, there
  exist subcomplexes of $(2^F)^{\langle r \rangle}$, that are isomorphic to $\Delta_{d-1}^{\langle 2d\rangle}$.
  Let $\widetilde{\Delta}$ be one of those subcomplexes. Note, that there exists more than one such
  subcomplex as soon as $r>2d$. In $\widetilde{\Delta}$ there is another subcomplex $\Gamma$ that is
  isomorphic to $\Delta_{d-1}^{\langle d \rangle}$ and that completely lies in the interior of $\widetilde{\Delta}$.
  Just peel off the ``outer'' layers of $\widetilde{\Delta}$. In particular, there does not exist any
  edge in $\Delta^{\langle r \rangle}$ that connects some vertex in $\Gamma$ to some vertex in
  $\Delta^{\langle r \rangle}\setminus \widetilde{\Delta}$.
  Let $V_{\Gamma}$, $V_{\widetilde{\Delta}}$ and $V_{\Delta}$ denote the vertex sets of $\Gamma$,
  $\widetilde{\Delta}$ and $\Delta^{\langle r \rangle}$, respectively. By construction, we have
  $\Gamma=(\Delta^{\langle r \rangle})_{V_{\Gamma}}$ and thus $\Gamma$ is an induced subcomplex of $\Delta^{\langle r \rangle}$.
  Arguing as in the proof of \ref{thm:asymptoticsBary} one shows that
  \begin{equation*}
    \widetilde{H}_j(\Delta^{\langle r \rangle}_{A\cup B};\KK)\neq 0
  \end{equation*}
  for any $A\subseteq V_{\Gamma}$ with $\widetilde{H}_j(\Gamma_A;\KK)\neq 0$ and any
  $B\subseteq V_{\Delta}\setminus V_{\widetilde{\Delta}}$. Let $l_{\Gamma}(j)$ and $u_{\Gamma}(j)$ denote the beginning and
  the end of the $j$\textsuperscript{th} strand of the resolution of $\KK[\Gamma]$. Using Hochster's formula 
  \ref{hochster} we conclude that $\beta_{i,i+j}(\KK[\Delta^{\langle r \rangle}])\neq 0$ for
  $l_{\Gamma}(j)\leq i\leq u_{\Gamma}(j)+\# V_{\Delta}-\# V_{\widetilde{\Delta}}$. Let $1\leq j\leq \frac{d}{2}$.
  In this case, \ref{thm:asymptoticsBary} (i) implies that  $\beta_{i,i+j}(\KK[\Delta^{\langle r \rangle}])\neq 0$ if
  \begin{align*}
     j\leq i & \leq \pdim (\KK[\Delta_{d-1}^{\langle d\rangle}])+\pdim (\KK[\Delta^{\langle r \rangle}])+\depth( \KK[\Delta^{\langle r\rangle}])-\# V_{\widetilde{\Delta}}\\
             &=\binom{2d-1}{d-1}-d+\pdim (\KK[\Delta^{\langle r \rangle}])+\depth (\KK[\Delta])-\binom{3d-1}{d-1},
  \end{align*}
  where we use that $\Delta_{d-1}^{\langle r\rangle}$ has $\binom{d+r-1}{d-1}$ many vertices and that the depth is invariant under taking edgewise subdivisions (see Table \ref{How}).  
  This shows (i).
  We omit the proofs of (ii) and (iii) since they follow from similar reasoning, using parts (ii) and (iii)
  of \ref{Prop:edgewise1}.  
  
\end{proof}
As an immediate consequence of \ref{thm:asymptoticsEdge} and of the fact that $\lim_{r\rightarrow \infty} \pdim (\KK[\Delta^{\langle r \rangle}])=\infty$  we obtain: 

\begin{corollary}
\label{coro:asymptoticsEdge}
  Let $d-1\geq 1$ and let $\Delta$ be a $(d-1)$-dimensional simplicial complex. For $1\leq j\leq d-1$ one has that $\#\{ i : \beta_{i,i+j}(\KK[\Delta^{\langle r \rangle}]= 0\}$ is bounded above in terms of $j $ and $d$. In particular
    \begin{equation*}
    \lim_{r\rightarrow \infty} \frac{\# \{i~:~\beta_{i,i+j}(\KK[\Delta^{\langle r \rangle}])\neq 0\}}{\pdim (\KK[\Delta^{\langle r \rangle}])}=1.
  \end{equation*}
  for every $j=1,\dots,d-1$. 
\end{corollary}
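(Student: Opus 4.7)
The proof is essentially immediate from \ref{thm:asymptoticsEdge}, so the plan is to extract the quantitative content of that theorem and trivialize the limit.

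First, I would fix $j \in \{1,\dots,d-1\}$ and one of the three cases of \ref{thm:asymptoticsEdge}, noting that in every case the $i$ for which non-vanishing is asserted form an interval of the form $[L_j(d),\, R_r]$, where $L_j(d) \in \{j, m_j, 2^d-d-1\}$ depends only on $j$ and $d$, and
$$R_r = \pdim(\KK[\Delta^{\langle r \rangle}]) + \depth(\KK[\Delta]) + \binom{2d-1}{d-1} - \binom{3d-1}{d-1} - d.$$
Therefore the set of ``zero positions'' inside $\{0,1,\dots,\pdim(\KK[\Delta^{\langle r\rangle}])\}$ has cardinality at most
$$L_j(d) + \bigl(\pdim(\KK[\Delta^{\langle r\rangle}]) - R_r\bigr) = L_j(d) + \binom{3d-1}{d-1} - \binom{2d-1}{d-1} + d - \depth(\KK[\Delta]).$$
Since $\depth(\KK[\Delta])\geq 0$ this last quantity is bounded above by a constant $C(d,j)$ depending only on $d$ and $j$, which gives the first assertion of the corollary.

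For the limit statement I would then observe that $\pdim(\KK[\Delta^{\langle r\rangle}])$ tends to infinity with $r$: indeed, by Table~\ref{How} the depth of $\KK[\Delta^{\langle r\rangle}]$ equals $\depth(\KK[\Delta])$ and is therefore constant in $r$, while the number of variables in the ambient polynomial ring equals the number of vertices of $\Delta^{\langle r\rangle}$, which for any simplicial complex $\Delta$ containing at least one edge grows polynomially in $r$ (even for a single edge it is $r+1$). Writing
$$\frac{\#\{i~:~\beta_{i,i+j}(\KK[\Delta^{\langle r \rangle}])\neq 0\}}{\pdim(\KK[\Delta^{\langle r \rangle}])} = 1 - \frac{\#\{i~:~\beta_{i,i+j}(\KK[\Delta^{\langle r \rangle}])= 0\}}{\pdim(\KK[\Delta^{\langle r \rangle}])}$$
and using the uniform bound $C(d,j)$ for the numerator of the second term, the right-hand side converges to $1$ as $r \to \infty$.

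There is no genuine obstacle here; the only mild point requiring care is the need to use $r\geq 2d$ so that \ref{thm:asymptoticsEdge} applies, which is harmless for an asymptotic statement. In particular no new combinatorial or homological input is needed beyond what is already assembled in \ref{edgewise:Betti}, \ref{lem:InnerFace} and \ref{thm:asymptoticsEdge}.
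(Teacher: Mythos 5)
Your proof is correct and is exactly the argument the paper intends: the paper dispatches this corollary in one sentence as "an immediate consequence of \ref{thm:asymptoticsEdge} and of the fact that $\lim_{r\rightarrow\infty}\pdim(\KK[\Delta^{\langle r\rangle}])=\infty$," and your write-up simply makes that reduction explicit (reading off the left endpoint $L_j(d)$ and the right endpoint $R_r$ of the guaranteed non-vanishing interval, bounding the complement, and dividing by $\pdim$). No substantive difference in method.
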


As for iterated barycentric subdivision, the above theorem does not cover the last strand of the resolution of the edgewise subdivision, in case the regularity is $d$ or equivalently when the simplicial complex has homology in top-dimension. It turns out, that in this
setting the asymptotic behavior of the last strand of the resolution depends on the simplicial
complex. We now recall a special case of Theorem 5.1 from \cite{Diaconis}.

\begin{proposition}\label{cor:edgewiseF}
  Let $\Delta$ be a $(d-1)$-dimensional simplicial complex. Then, as $r\rightarrow \infty$,
  \begin{equation*}
    \frac{f_0^{\Delta^{\langle r \rangle}}}{r^{d-1}}\quad \quad \rightarrow \frac{f_{d-1}^{\Delta}}{(d-1)!}.
  \end{equation*}
\end{proposition}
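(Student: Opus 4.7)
The plan is to give a direct proof by counting the vertices of $\Delta^{\langle r\rangle}$ explicitly according to which face of $\Delta$ supports them. Recall from the definition in \ref{notat} that the vertex set of $\Delta^{\langle r\rangle}$ consists of those $a=(a_1,\dots,a_n)\in\Omega_{r,n}$ for which $\supp(a)\in\Delta$ (this is condition (i) applied to singletons). Thus every vertex of $\Delta^{\langle r\rangle}$ is determined by the pair consisting of a non-empty face $F\in\Delta$ together with a composition of $r$ into $\# F$ positive parts, the latter giving the non-zero entries of $a$ in the positions indexed by $F$.

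First, I would record the exact counting identity
\begin{equation*}
  f_0^{\Delta^{\langle r\rangle}} \;=\; \sum_{\emptyset\neq F\in\Delta}\binom{r-1}{\# F-1}
  \;=\; \sum_{k=1}^{d} f_{k-1}^{\Delta}\binom{r-1}{k-1},
\end{equation*}
which follows because the number of $(a_1,\dots,a_{\# F})\in\NN_{>0}^{\# F}$ with $a_1+\cdots+a_{\# F}=r$ is $\binom{r-1}{\# F-1}$. Next, I would observe that $\binom{r-1}{k-1}$ is a polynomial in $r$ of degree $k-1$, so in the sum on the right only the term with $k=d$ contributes to the leading order in $r$, namely
\begin{equation*}
  f_{d-1}^{\Delta}\binom{r-1}{d-1} \;=\; \frac{f_{d-1}^{\Delta}}{(d-1)!}\,r^{d-1} + O(r^{d-2}).
\end{equation*}
All other summands are $O(r^{d-2})$ as $r\to\infty$.

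Finally, I would divide by $r^{d-1}$ and pass to the limit, which yields $\lim_{r\to\infty}f_0^{\Delta^{\langle r\rangle}}/r^{d-1}=f_{d-1}^{\Delta}/(d-1)!$, as claimed. There is no serious obstacle here: the only subtle point is the correct identification of the vertices of $\Delta^{\langle r\rangle}$ with pairs (face of $\Delta$, positive composition of $r$), and this is immediate from condition (i) of the definition of the edgewise subdivision. Alternatively one can simply cite \cite[Theorem 5.1]{Diaconis}, as remarked in the text preceding the statement.
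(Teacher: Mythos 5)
Your proof is correct and more self-contained than what the paper does: the paper does not supply a proof at all, it simply states the proposition as "a special case of Theorem 5.1 from \cite{Diaconis}" (the Diaconis--Fulman carries/shuffling paper), whereas you prove it from scratch by the exact count
$$f_0^{\Delta^{\langle r\rangle}}=\sum_{k=1}^{d} f_{k-1}^{\Delta}\binom{r-1}{k-1},$$
obtained by partitioning the vertices of $\Delta^{\langle r\rangle}$ according to their support (a nonempty face of $\Delta$) and counting positive compositions of $r$. This is the $f$-vector transformation under edgewise subdivision in the special case of $f_0$ (the paper invokes the general transformation from \cite{BW-Veronese} for the related Lemma~\ref{lem:minimale}), and extracting the degree-$(d-1)$ leading term immediately gives the stated limit. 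Your argument is elementary and verifiable within the paper's own notation (condition (i) of the definition of $\Delta^{\langle r\rangle}$ applied to singletons identifies the vertex set), which is arguably preferable to an opaque citation; you also note the citation route as an alternative, so nothing is lost. One small remark: it is worth observing that the empty face contributes nothing since the zero vector is not in $\Omega_{r,n}$ for $r\geq 1$, so the sum genuinely runs over nonempty faces only -- you handle this correctly.
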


A more simple fact paralleling \ref{lem:minimal} can be easily deduced from the $f$-vector transformation
in edgewise subdivisions (see \cite{BW-Veronese}).

\begin{lemma} \label{lem:minimale}
Let $\Delta,\Delta'$ be two $(d-1)$-dimensional simplicial complexes such that for some 
$0 \leq i \leq d-1$ we have $f_i^\Delta > f_i^{\Delta'}$ and $f_{j}^\Delta = f_j^{\Delta'}$ for
$i < j \leq d-1$. Then there exists $R$ such that for $r \geq R$ we have 
  \begin{eqnarray*}
     f_j^{\Delta^{\langle r \rangle}}>f_j^{\Delta'^{\langle r \rangle}} \mbox{~for~} 0 \leq j \leq i \\
     f_j^{\Delta^{\langle r \rangle}}=f_j^{\Delta'^{\langle r \rangle}} \mbox{~for~} i < j \leq d-1.
  \end{eqnarray*}
\end{lemma}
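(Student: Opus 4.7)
The plan is to derive both parts of the lemma directly from the linear transformation formula for $f$-vectors under edgewise subdivision, and then read off the dominant asymptotic terms.

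First, I would recall from \cite{BW-Veronese} that there exist universal polynomials $I_{k,j}(r) \in \mathbb{Q}[r]$ (for $0 \le j \le k$) counting the $j$-faces in the relative interior of the $r$\textsuperscript{th} edgewise subdivision of a $k$-simplex, and that for every $(d-1)$-dimensional simplicial complex $\Gamma$ one has
\[
f_j^{\Gamma^{\langle r \rangle}} \;=\; \sum_{k=j}^{d-1} f_k^\Gamma \cdot I_{k,j}(r),
\]
because each $j$-face of $\Gamma^{\langle r \rangle}$ lies in the relative interior of exactly one face of $\Gamma$. The crucial quantitative input is that $\deg I_{k,j}(r) = k$ (this mirrors the fact that the $r$\textsuperscript{th} edgewise subdivision of a $k$-simplex has on the order of $r^k$ top-dimensional faces) and that the leading coefficient of $I_{k,j}(r)$ is strictly positive.

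Subtracting the analogous formula for $\Delta'$ yields
\[
f_j^{\Delta^{\langle r \rangle}} - f_j^{\Delta'^{\langle r \rangle}} \;=\; \sum_{k=j}^{d-1} (f_k^\Delta - f_k^{\Delta'}) \, I_{k,j}(r).
\]
For $j > i$, every index $k$ in the sum satisfies $k \ge j > i$, so each coefficient $f_k^\Delta - f_k^{\Delta'}$ vanishes by hypothesis, and the asserted equality $f_j^{\Delta^{\langle r \rangle}} = f_j^{\Delta'^{\langle r \rangle}}$ holds for every $r$, not merely for $r \ge R$.

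For $0 \le j \le i$, the hypothesis kills every term with $k > i$, so the difference reduces to
\[
(f_i^\Delta - f_i^{\Delta'}) \, I_{i,j}(r) \;+\; \sum_{k=j}^{i-1} (f_k^\Delta - f_k^{\Delta'}) \, I_{k,j}(r).
\]
The first summand is a polynomial in $r$ of degree exactly $i$ with strictly positive leading coefficient, while each of the remaining summands has degree at most $i-1$. Hence the whole difference is a polynomial in $r$ of degree $i$ with positive leading coefficient, and is therefore strictly positive once $r$ is large enough. Choosing $R$ uniformly so that the required positivity holds for every $j \in \{0,1,\ldots,i\}$ (a finite set of indices) finishes the argument. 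The only genuine ingredient to verify is the degree and sign of $I_{i,j}(r)$; everything else is formal polynomial asymptotics, so I anticipate no serious obstacle.
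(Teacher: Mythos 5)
Your proposal is correct and follows exactly the route the paper indicates: the paper gives no written proof, deferring to the $f$-vector transformation under edgewise subdivision from \cite{BW-Veronese}, which is precisely the linear identity $f_j^{\Gamma^{\langle r\rangle}}=\sum_{k\ge j}f_k^\Gamma\, I_{k,j}(r)$ you use. The one fact you flag as needing verification, that $I_{k,j}(r)$ is a polynomial of degree exactly $k$ in $r$ with positive leading coefficient, does hold (e.g.\ $I_{k,k}(r)=r^k$ and $I_{k,0}(r)=\binom{r-1}{k}$, and the general case follows by an Ehrhart-type volume count or by subtracting the lower-degree boundary contributions from the total $j$-face count of $\Delta_k^{\langle r\rangle}$), so the argument is complete.
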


These results are crucial for the next proposition, which treats the asymptotics of the last strand of
the resolution of $\KK[\Delta^{\langle r \rangle}]$ if the $(d-1)$-dimensional simplicial complex $\Delta$ has
homology in top dimension, i.e., if $\reg (\KK[\Delta^{\langle r \rangle}])=d$.

\begin{proposition}
\label{prop:edgewiseHomology}  
  Let $d-1\geq 1$ and let  $\Delta$ be a $(d-1)$-dimensional simplicial complex such that
  $\widetilde{H}_{d-1}(\Delta;\KK)\neq 0$. Let further $\sigma$ be a minimal homology
  $(d-1)$-cycle of $\Delta$ and let 
  \begin{equation*}
    \widetilde{\sigma}=\{F\in \Delta~:~F\subseteq G\mbox{ for some } G \mbox{ in the support of } \sigma\}
  \end{equation*}
be the corresponding induced subcomplex of $\Delta$, whose vertex set is $V_r^{\sigma}$.
  Then
  \begin{itemize}
    \item[(i)] for $r\geq 1$ 
      $\beta_{i,i+d}(\KK[\Delta^{\langle r \rangle}])\neq 0$ for $ \# V_r^{\sigma}-d\leq i\leq \pdim (\KK[\Delta^{\langle r \rangle}])$. If $r$ is large, then in addition $\beta_{i,i+d}(\KK[\Delta^{\langle r \rangle}])= 0$ for $0\leq i< \# V_r^{\sigma}-d$.
    \item[(ii)]
      \begin{equation*}
        \lim_{r\rightarrow\infty}\frac{\# \{i~:~\beta_{i,i+d}(\KK[\Delta^{\langle r \rangle}])\neq 0\}
}{\pdim \KK[\Delta^{\langle r \rangle}]}           = 1-\frac{f_{d-1}^{\widetilde{\sigma}}}{f_{d-1}^{\Delta}}
      \end{equation*}
  \end{itemize}
\end{proposition}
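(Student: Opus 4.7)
The plan is to mirror almost verbatim the proof of \ref{prop:BarycentricHomology}, substituting \ref{cor:edgewiseF} and \ref{lem:minimale} for \ref{cor:vertices} and \ref{lem:minimal}. Because $|\Delta| = |\Delta^{\langle r\rangle}|$ the topological inputs transport without change; only the $f$-vector asymptotics and the combinatorial identification of induced subcomplexes need to be re-verified in the edgewise setting.

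For the non-vanishing half of (i), I would first check that $(\Delta^{\langle r\rangle})_{V_r^\sigma} = \widetilde\sigma^{\langle r\rangle}$. This follows from the definition of edgewise subdivision: any vertex of $\Delta^{\langle r\rangle}$ whose support lies in a face of $\widetilde\sigma$ is by construction a vertex of $\widetilde\sigma^{\langle r\rangle}$, and the flag-type conditions (i) and (ii) defining simplices of $\Delta^{\langle r\rangle}$ are inherited. Since $|\widetilde\sigma| = |\widetilde\sigma^{\langle r\rangle}|$, the cycle $\sigma$ pushes forward to a non-trivial $(d-1)$-cycle $\sigma_r$ of $\widetilde\sigma^{\langle r\rangle}$, so $\widetilde{H}_{d-1}(\widetilde\sigma^{\langle r\rangle};\KK) \neq 0$. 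Hochster's formula \ref{hochster} then gives $\beta_{\#V_r^\sigma - d,\ \#V_r^\sigma}(\KK[\Delta^{\langle r\rangle}]) \neq 0$. Adding any further vertices $A \supseteq V_r^\sigma$ cannot kill $\sigma_r$ (no $d$-simplices exist in a $(d-1)$-complex), which yields the whole range $\#V_r^\sigma - d \le i \le \pdim(\KK[\Delta^{\langle r\rangle}])$.

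For the vanishing statement (assuming $r$ is large), suppose $\tau$ is a non-zero $(d-1)$-cycle of $\Delta^{\langle r\rangle}$. Because $|\Delta^{\langle r\rangle}| = |\Delta|$ and every $(d-1)$-simplex of $\Delta^{\langle r\rangle}$ is contained in the geometric realization of a unique $(d-1)$-simplex of $\Delta$, Lemma \ref{lem:cycles} applies and expresses $\tau$ uniquely as a combination of the images $\widetilde{\sigma}_k$ of a basis of top cycles of $\Delta$. Consequently the induced subcomplex $\widetilde\tau$ contains $\widetilde{\sigma'}^{\langle r\rangle}$ for some non-zero $(d-1)$-cycle $\sigma'$ of $\Delta$. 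By minimality of $\sigma$, either $\widetilde\sigma = \widetilde{\sigma'}$, or there is a largest index $i \le d-1$ with $f_i^{\widetilde{\sigma'}} > f_i^{\widetilde\sigma}$. Invoking \ref{lem:minimale} in the latter case shows that for $r$ large $\#V_r^{\sigma'} > \#V_r^\sigma$, so $\tau$ cannot be supported on fewer than $\#V_r^\sigma$ vertices. Hochster's formula \ref{hochster} then rules out $\beta_{i,i+d}(\KK[\Delta^{\langle r\rangle}]) \neq 0$ for $i < \#V_r^\sigma - d$.

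For part (ii), I would use the invariance of depth under edgewise subdivision (Table \ref{How}) to write
\[
\frac{\#\{i : \beta_{i,i+d}(\KK[\Delta^{\langle r\rangle}]) \neq 0\}}{\pdim(\KK[\Delta^{\langle r\rangle}])}
  = 1 - \frac{\#V_r^\sigma - d - 1}{\#V_r^\Delta - \depth(\KK[\Delta])},
\]
and then divide numerator and denominator by $r^{d-1}$. Applying \ref{cor:edgewiseF} separately to $\widetilde\sigma$ and to $\Delta$ sends both quotients to $\tfrac{1}{(d-1)!} f_{d-1}^{\widetilde\sigma}$ and $\tfrac{1}{(d-1)!} f_{d-1}^{\Delta}$ respectively, producing the stated limit $1 - f_{d-1}^{\widetilde\sigma}/f_{d-1}^{\Delta}$. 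The only step that is not a transparent analogue of the barycentric argument is the vanishing part of (i): one must be sure that \ref{lem:cycles} really applies, i.e.\ that $\Delta^{\langle r\rangle}$ subdivides $\Delta$ in the sense of simplicial subdivisions, and this is where I expect to spend most of the care in writing up — the remainder is arithmetic bookkeeping.
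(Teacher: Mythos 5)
Your proposal follows the paper's own proof essentially verbatim: the authors likewise note that $(\Delta^{\langle r\rangle})_{V_r^\sigma}=\widetilde\sigma^{\langle r\rangle}$, invoke Hochster's formula and the persistence of the cycle under adding vertices for the non-vanishing half of (i), cite \ref{lem:cycles} together with \ref{lem:minimale} for the vanishing half, and obtain (ii) by the same normalization by $r^{d-1}$ and appeal to \ref{cor:edgewiseF}. The one place you flag as needing care — whether \ref{lem:cycles} applies to $\Delta^{\langle r\rangle}$ — is indeed satisfied, since edgewise subdivision is a genuine geometric subdivision in which each $(d-1)$-simplex of $\Delta$ is a union of $(d-1)$-simplices of $\Delta^{\langle r\rangle}$, exactly as required by the hypothesis of that lemma.
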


\begin{proof}
   Since $(\Delta^{\langle r \rangle})_{V_r^{\sigma}}=\widetilde{\sigma}^{\langle r \rangle}$, and since $\sigma$ is a homology $(d-1)$-cycle,
   it holds that $\widetilde{H}_{d-1}((\Delta^{\langle r \rangle})_{V_r^{\sigma}};\KK)\neq 0$. Thus, using Hochster's formula \ref{hochster},
   we infer $\beta_{i,i+d}(\KK[\Delta^{\langle r \rangle}])\neq 0$ for $i=\# V_r^{\sigma}-d$. By the same reasoning as in the
   proof of \ref{prop:BarycentricHomology} we can further conclude that $\beta_{i,i+d}(\KK[\Delta^{\langle r \rangle}])\neq 0$
   for $\# V_r^{\sigma}-d\leq i\leq \pdim (\KK[\Delta^{\langle r \rangle}])$, which shows the non-vanishing in (i).
   For the vanishing we use the minimality of $\sigma$ and \ref{lem:minimale} in the same way as in the
   proof of \ref{prop:BarycentricHomology}.

   We now prove (ii). Let $V_r^{\Delta}$ denote the vertex set of $\Delta^{\langle r \rangle}$. It follows from (i) that
   \begin{align*}
     & \empty\frac{1}{\pdim( \KK[\Delta^{\langle r \rangle}])}\# \{i~:~\beta_{i,i+d}(\KK[\Delta^{\langle r \rangle}])\neq 0\}\\
     & = \frac{1}{\pdim (\KK[\Delta^{\langle r \rangle}])}(\pdim( \KK[\Delta^{\langle r \rangle}])-(\# V_r^{\sigma}-d-1))\\
     & = 1 - \frac{\# V_r^{\sigma}-d-1}{\# V_r^{\Delta}-\depth (\KK[\Delta^{\langle r\rangle}])}\\
     & = 1 - \frac{\frac{1}{r^{d-1}}(\# V_r^{\sigma}-d-1)}{\frac{1}{r^{d-1}}(\# V_r^{\Delta}-
                                       \depth (\KK[\Delta]))}\\
                &=1 - \frac{\frac{1}{r^{d-1}}(f_0^{\widetilde{\sigma}^{\langle r\rangle}}-d-1)}{\frac{1}{r^{d-1}}(f_0^{\Delta^{\langle r\rangle}}-
                                       \depth (\KK[\Delta]))}.
   \end{align*}
   As $r$ goes to infinity, \ref{cor:edgewiseF} implies that
   $\frac{\# \{i~:~\beta_{i,i+d}(\KK[\Delta^{\langle r \rangle}])\neq 0\}}{\pdim \KK[\Delta^{\langle r \rangle}]}$  approaches
   $\frac{f_{d-1}^{\widetilde{\sigma}}}{f_{d-1}^{\Delta}}$.
\end{proof}
 
Since the limits in \ref{prop:BarycentricHomology} and \ref{prop:edgewiseHomology} coincide \ref{ex:limit} shows that for any $d$ any rational number in the half-open intervall $[0,1)$ can occur as a limit in \ref{prop:edgewiseHomology}.

\section{Appendix}
\label{Appendix} 
In this section we provide the proofs of the statements that require only   manipulations of simple numerical expressions. 

\begin{proof}[ Proof of \ref{againmj}] 
We set 
\begin{equation} \label{eq:nj}
  n_j=\min\Big\{\sum_{\ell=1}^r (2^{i_\ell+2}-2): \begin{array}{l}
  (i_1,\dots,i_r)\in \NN^r,\\
  i_1+\cdots +i_r+(r-1)=j-1,\\
  i_1+\cdots +i_r+2r\leq d
  \end{array} 
  \Big\}-j.
\end{equation}
 
 Our goal is to prove that $n_j=m_j$ where the $m_j$'s are defined in \ref{defmj}. 
We may write the  second condition in \ref{eq:nj}  as 

 \begin{equation}
 \label{App1} 
  i_1+\cdots +i_r=j-r
  \end{equation} 
and the  third  as 
$$j+r\leq d,$$
or, equivalently, 
\begin{equation}
 \label{App2}  r\leq d-j.
 \end{equation} 
 
 Note  also the   \ref{App1}  implies that $r\leq j$ since $(i_1,\dots,i_r)\in \NN^r$. Therefore $r\leq \min\{j, d-j\}$. 
Summing up,  we can rewrite the definition of $n_j$ as 

\begin{equation} \label{eq:nj2}
n_j=\min_{r=1,\dots,\min\{j,d-j\}} \min \Big\{\Big(\sum_{\ell=1}^r  2^{i_\ell+2}\Big ) -2r~: 
\begin{array}{l}
(i_1,\dots,i_r)\in \NN^r, \\
 i_1+\cdots +i_r=j-r
 \end{array}  \Big\}-j. 
 \end{equation} 

Observe the following: 

\noindent {\bf Claim 1}:   Assume  $r<\min\{j,d-j\}$, $i=(i_1,\dots,i_r)\in \NN^r$ and $i_1+\cdots +i_r=j-r$.  Note that $j-r>0$ and hence  one of the $i_k$ is positive, say $i_1>0$.  Then we set $u=(u_1,\dots,u_{r+1})$ where $u_1=i_1-1$, $u_k=i_k$ for $k=2,\dots,r$ and $u_{r+1}=0$. By construction, 
$$u_1+\dots+u_{r+1}=i_1+\cdots +i_r-1=j-(r+1)$$ 
and hence $u$ is a ``valid" vector. We want to show that the ``contribution" of $u$ is strictly smaller than that  of the vector $i$. Hence we consider the difference of the ``contributions" of the vector  $i$ and the vector  $u$. 
$$\Big(\sum_{l=1}^r  2^{i_l+2}\Big ) -2r - \Big( \Big(\sum_{l=1}^{r+1}  2^{u_l+2}\Big ) -2(r+1)\Big),$$ 
which is 
$$2^{i_1+2}-2r-2^{u_1+2}-2^{u_{r+1}+2} +2(r+1),$$
that is 
$$2^{i_1+2}-2^{i_1+1}-2^2+2,$$
that is 
$$2^{i_1+2}-2^{i_1+1}-2,$$
which is clearly positive  (since $i_1>0$).  
 \medskip 

\noindent {\bf  Claim 2}: 
 Now suppose that  $i_1+\cdots +i_r=j-r$ and that there are two of the $i_k$ whose difference is $>1$, say $i_2-i_1>1$.   
 Now we define $u=(u_1,\dots,u_r)$ by $u_1=i_1+1, u_2=i_2-1$ and $u_k=i_k$ for $k>2$. We want to show that the contribution of 
 $u$ is smaller than the one of $i$. Hence we consider the difference of the contributions and we have: 
 $$2^{i_1+2}+2^{i_2+2}-2^{i_1+3}-2^{i_2+1}.$$
 We want to show that  it is positive. We may factor out $2^{i_1+2}$ and we have to show that 
 $$1+2^{i_2-i_1}-2-2^{i_2-i_1-1}$$
 is positive, that is, 
 $$2^{i_2-i_1}-1-2^{i_2-i_1-1}$$
is positive.  This is clearly true. 
\medskip  

Taking Claim 1 and Claim 2 into consideration we have that the minimum in the expression of \ref{eq:nj2} for $n_j$ is obtained when $r=\min\{j,d-j\}$ and for the  vector $(i_1,\dots,i_r)$ such that $i_1\geq i_2\geq  \dots \geq i_r$ and $i_1-i_r\leq 1$. 
Now if $j\leq d-j$, i.e., $j\leq d/2$, then $r=j$  and   the corresponding vector is $(i_1,\dots,i_j)=(0,\dots, 0)$. It follows then that $n_j=j$. 

If instead $d-j\leq j$, i.e., $j\geq d/2$, then $r=d-j$ and the  corresponding vector $(i_1,\dots,i_{d-j})$ is obtained as follows. 
Since $$i_1+\dots+i_{d-j}=j-(d-j)=2j-d$$
and $i_1\geq i_2\geq  \dots \geq i_{d-j}$ and $i_{1}-i_{d-j}\leq 1$  it must hold that 
$$i_k=\left\{ \begin{array}{ll}
a+1 & \quad\mbox{ for }k=1,\dots, c\\
a     &\quad\mbox{ for } k=c+1,\dots, d-j
\end{array} 
\right.
$$ 
where $a$ and $c$ are non-negative integers such that 
$$(2j-d)=a(d-j)+c$$
and $0\leq c< d-j$. 
Hence we have
 \begin{align*}
n_j&= \Big(\sum_{\ell=1}^{d-j}   2^{i_\ell+2}\Big ) -2(d-j)-j=c 2^{a+3}+(d-j-c)2^{a+2} -2d+j \\
&=  2^{a+2} (2c+d-j-c)-2d+j=2^{a+2} (c+d-j)-2d+j
 \end{align*}
 
Hence we have shown that $n_j$ is equal to $m_j$. 
 \end{proof} 

\begin{proof}[ Proof of \ref{lem:i1=0}]
  We have the following chain of inequalities:
  \begin{align*}
    (2^{i_r+2}-2)2^{i_2+\cdots +i_{r-1}+2r-4}&=2^{i_r+2}\left(1-\frac{1}{2^{i_r+1}}\right)2^{i_2+\cdots +i_{r-1}+2r-4}\\
       &\geq 2^{i_r+2}\left(1-\frac{1}{2}\right)2^{i_2+\cdots +i_{r-1}+2r-4}\\
       &=2^{i_r+1+i_2+\cdots +i_{r-1}+2r-4}.
  \end{align*}
  Since $i_1=0$ by (i), it follows from (ii) that $i_2+\cdots +i_{r-1}+i_r+(r-1)=j-1$ and hence,
  \begin{equation*}
    (2^{i_r+2}-2)2^{i_2+\cdots +i_{r-1}+2r-4}\geq 2^{j+r-3}.
  \end{equation*}
  For $r\geq 4$ the claim follows. We need to treat the cases $r\in \{1,2,3\}$ separately.

  \noindent {\sf Case 1:} $r=1$.
    By (ii) it then follows that $j=1$ and hence $d<2$, which is a contradiction to the assumptions.

  \noindent {\sf Case 2:} $r=2$. It follows from (ii) that $i_2=j-2$ and hence
    \begin{align*}
      &\empty(2^{i_r+2}-2)2^{i_2+\cdots +i_{r-1}+2r-4}+\sum_{l=1}^r(2^{i_l+2}-2)\\
      &= (2^j-2)+(2^{0+2}-2)+(2^{j}-2)\\
      &=2\cdot 2^j-2=2^{j+1}-2,
    \end{align*}
    which shows the claim.

  \noindent {\sf Case 3:} $r=3$. By (i) and (ii) it holds that $i_2+i_3=j-3$. From this, we obtain
    \begin{align*}
      &\empty(2^{i_r+2}-2)2^{i_2+\cdots +i_{r-1}+2r-4}+\sum_{l=1}^r(2^{i_l+2}-2)\\
      &=(2^{i_3+2}-2)2^{i_2+2}+(2^{0+2}-2)+(2^{i_2+2}-2)+(2^{i_3+2}-2)\\
      &=2^{i_2+i_3+4}-2^{i_2+3}+2^{i_2+2}+2^{i_3+2}-2\\
      &=2^{j+1}-2^{i_2+2}+2^{i_3+2}-2\\
       &\geq 2^{j+1}-2,
    \end{align*}
    where for the last inequality we use $i_2\leq i_3$.
\end{proof}

\begin{proof}[ Proof of \ref{lem:i1>1}] 
  Since
  \begin{align*}
    \sum_{\ell=1}^r(2^{i_{\ell}+2}-2)-
    \sum_{\ell=1}^r(2^{j_{\ell}+2}-2)&=2^{i_1+2}+2^{i_2+2}-2^{j_1+2}-2^{j_2+2}\\
       &=2^{i_1+2}+2^{i_2+2}-2^{i_1+1}-2^{i_2+3}=2^{i_1+1}-2^{i_2+2},
  \end{align*}
  we need to show that
  \begin{equation}\label{eq1}
    (2^{i_r+2}-2)2^{i_2+\cdots +i_{r-1}+2r-4}+2^{i_1+1}-2^{i_2+2}\geq 0.
  \end{equation}
  We have
  \begin{align*}
    (2^{i_r+2}-2)2^{i_2+\cdots +i_{r-1}+2r-4}&= 2^{i_r+2}\left(1-\frac{1}{2^{i_r+1}}\right)2^{i_2+\cdots +i_{r-1}+2r-4}\\
    &\geq  2^{i_r+2}\left(1-\frac{1}{2}\right)2^{i_2+\cdots +i_{r-1}+2r-4}\\
    &=2^{i_r+1}2^{i_2+\cdots +i_{r-1}+2r-4}=2^{i_2+\cdots +i_{r-1}+i_r+2r-3}.
  \end{align*}
  For $r\geq 3$ the last expression is $\geq 2^{i_2+i_r+6-3}\geq 2^{i_2+2}$ from which follows \ref{eq1} in
  this case. If $r=2$, we obtain
  \begin{equation*}
    (2^{i_r+2}-2)2^{i_2+\cdots +i_{r-1}+2r-4}=2^{i_2+2}-2.
  \end{equation*}
  Hence, \ref{eq1} also holds in this case.
\end{proof}

\section{Acknowledgement*} 
We thank Daniel Erman and Svante Linusson for useful discussions and for pointing us to results from the literature
that are needed in our arguments.

\bibliographystyle{plain}
  
\end{document}